\newtheorem{theorem}{Theorem}[section]
\newtheorem{assumption}[theorem]{Assumption}
\newtheorem{corollary}[theorem]{Corollary}
\newtheorem{definition}[theorem]{Definition}
\newtheorem{lemma}[theorem]{Lemma}
\newtheorem{proposition}[theorem]{Proposition}
\newtheorem{remark}[theorem]{Remark}
\numberwithin{equation}{section}
\def\beq{\begin{equation}}
\def\ee{\end{equation}}
\def\CC{\mathbb{C}}
\def\DD{\mathbb{D}}
\def\RR{\mathbb{R}}
\def\NN{\mathbb{N}}
\def\ZZ{\mathbb{Z}}
\def\SS{\mathbb{S}}
\def\HH{\mathbb{H}}
\def\PP{\mathbb{P}}
\def\unity{\mathbbm{1}}
\def\del{\partial}
\def\delbar{\bar{\partial}}
\def\trace{\text{tr}}
\begin{document}
\title{Periodic solutions of the sinh-Gordon equation and integrable systems}

\date{\today}

\author{Markus Knopf}
%\author{Martin U. Schmidt}

\address{Institut f\"ur Mathematik, Universit\"at Mannheim, 
68131 Mannheim, Germany.} 

\email{knopf@math.uni-mannheim.de}
%\email{schmidt@math.uni-mannheim.de}

\begin{abstract}
\footnotesize
We study the space of periodic solutions of the elliptic $\sinh$-Gordon equation by means of spectral data consisting of a Riemann surface $Y$ and a divisor $D$. We show that the space $M_g^{\mathbf{p}}$ of real periodic finite type solutions with fixed period $\mathbf{p}$ can be considered as a completely integrable system $(M_g^{\mathbf{p}},\Omega,H_2)$ with a symplectic form $\Omega$ and a series of commuting Hamiltonians $(H_n)_{n \in \NN}$. In particular we relate the gradients of these Hamiltonians to the Jacobi fields $(\omega_n)_{n\in \NN_0}$ from the Pinkall-Sterling iteration. Moreover, a connection between the symplectic form $\Omega$ and Serre duality is established.
\end{abstract}

\maketitle

\setcounter{tocdepth}{1}
\tableofcontents

\section{Introduction}

The elliptic \textit{sinh-Gordon equation} is given by
\beq
\Delta u + 2\sinh(2u) = 0,
\label{eq_sinh}
\ee
where $\Delta$ is the Laplacian of $\RR^2$ with respect to the Euclidean metric and $u:\RR^2 \to \RR$ is a twice partially differentiable function which we assume to be real. \\

The $\sinh$-Gordon equation arises in the context of particular surfaces of constant mean curvature (CMC) since the function $u$ can be extracted from the conformal factor $e^{2u}$ of a conformally parameterized CMC surface. The study of CMC tori in $3$-dimensional space forms was strongly influenced by algebro-geometric methods (as described in \cite{Bobenko_Its_Matveev}) that led to a complete classification by Pinkall and Sterling \cite{Pinkall_Sterling} for CMC-tori in $\RR^3$. Moreover, Bobenko \cite{bob3, bob2} gave explicit formulas for CMC tori in $\RR^3$, $\SS^3$ and $\HH^3$ in terms of theta-functions and introduced a description of such tori by means of \textit{spectral data}. We also refer the interested reader to \cite{bob1,bob4}. Every CMC torus yields a doubly periodic solution $u:\RR^2 \to \RR$ of the $\sinh$-Gordon equation.
 With the help of differential geometric considerations one can associate to every CMC torus a hyperelliptic Riemann surface $Y$, the so-called \textit{spectral curve}, and a holomorphic line bundle $E$ on $Y$ (the so-called \textit{eigenline bundle}) that is represented by a certain divisor $D$. Hitchin \cite{Hitchin_Harmonic}, and Pinkall and Sterling \cite{Pinkall_Sterling} independently proved that all doubly periodic solutions of the $\sinh$-Gordon equation correspond to spectral curves of finite genus. We say that solutions of \eqref{eq_sinh} that correspond to spectral curves of finite genus are of \textit{finite type}. \\

In the present setting we will relax the condition on the periodicity and demand that $u$ is only simply periodic with a fixed period. After rotating the domain of definition we can assume that this period is real. This enables us to introduce simply periodic \textit{Cauchy data} with fixed period $\mathbf{p} \in \RR$ consisting of a pair $(u,u_y) \in C^{\infty}(\RR/\mathbf{p}\ZZ) \times C^{\infty}(\RR/\mathbf{p}\ZZ)$. Moreover, we demand that the corresponding solution $u$ of the $\sinh$-Gordon equation is of finite type. \\

This paper summarizes the author's PhD thesis \cite{Knopf_phd}. Its main goal is to identify the $\sinh$-Gordon equation \eqref{eq_sinh} as a completely integrable system (compare with \cite{Pedit_Schmitt}) and illustrate its features in the finite type situation. 
%Moreover, we introduce the \textit{spectral data} $(Y,D)$ and investigate how $Y$ and $D$ fit into the description of the $\sinh$-Gordon equation as a completely integrable system.

\section{Conformal CMC immersions into $\SS^3$}

\subsection{The Lie groups $SL(2,\CC)$ and $SU(2)$} 
Let us consider the Lie group $SL(2,\CC) := \{A \in M_{2\times 2}(\CC) \left| \right. \det(A) = 1 \}$. The Lie algebra $\mathfrak{sl}_2(\CC) :=  \{B \in M_{2\times 2}(\CC) \left| \right. \trace(B) = 0\}$ of $SL(2,\CC)$ is spanned by the matrices $\epsilon_+,\epsilon_-, \epsilon$ with
$$
\epsilon_+ = \begin{pmatrix} 0 & 1 \\ 0 & 0 \end{pmatrix},\;\; \epsilon_- = \begin{pmatrix} 0 & 0 \\ 1 & 0 \end{pmatrix},\;\; \epsilon = \begin{pmatrix} i & 0 \\ 0 & -i \end{pmatrix}.
$$

It will also be convenient to identify $\SS^3$ with the Lie group $SU(2) = \{A \in M_{2\times 2}(\CC) \left| \right. \det(A) = 1, \; \bar{A}^t = A^{-1} \}$. The Lie algebra of $SU(2)$ is denoted by $\mathfrak{su}(2)$ and a direct computation shows that $\mathfrak{su}(2) = \{B \in M_{2\times 2}(\CC) \left| \right. \trace(B) = 0, \; \bar{B}^t = -B \} \simeq \RR^3$.

\subsection{Extended frames}
We start with the following version of a result by Bobenko \cite{bob2} (cf. \cite{Kilian_Schmidt_infinitesimal}, Theorem 1.1).

\begin{theorem}\label{Alpha_mit_Lambda}
 Let $u: \CC \to  \RR$ and $Q: \CC \to \CC$ be smooth functions and define
\[
 \alpha_{\lambda} = \frac{1}{2}
\begin{pmatrix}
 u_z dz - u_{\bar{z}} d\bar{z} & i\lambda^{-1}e^{u}dz + i\overline{Q}e^{-u} d\bar{z} \\
 iQe^{-u} dz + i\lambda e^u d\bar{z} & -u_z dz + u_{\bar{z}}d\bar{z}
\end{pmatrix}.
\]
Then $2d\alpha_{\lambda} + [\alpha_{\lambda} \wedge \alpha_{\lambda}] = 0$ if and only if $Q$ is holomorphic, i.e. $Q_{\bar{z}} = 0$, and $u$ is a solution of the reduced Gauss equation
\beq
2u_{z\bar{z}} + \tfrac{1}{2}(e^{2u} - Q\overline{Q}e^{-2u}) = 0. \nonumber
\ee
For any solution $u$ of the above equation and corresponding extended frame $F_\lambda$, and $\lambda_0,\lambda_1 \in \SS^1, \lambda_0 \neq \lambda_1$, i.e. $\lambda_k=e^{it_k}$ the map defined by the \textit{Sym-Bobenko-formula}
\beq
f = F_{\lambda_1}F_{\lambda_0}^{-1} \nonumber
\ee
is a conformal immersion $f: \CC \to SU(2) \simeq \SS^3$ with constant mean curvature
\beq
H = i\frac{\lambda_0 + \lambda_1}{\lambda_0 - \lambda_1} = \cot(t_0 - t_1), \nonumber
\ee
conformal factor $v = e^u / \sqrt{H^2+1}$, and Hopf differential $\widetilde{Q}dz^2$ with $\widetilde{Q} = -\frac{i}{4}(\lambda_1^{-1} - \lambda_0^{-1})Q$.
\end{theorem}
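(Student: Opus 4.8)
The statement has two parts: (1) the zero-curvature / Maurer-Cartan equation $2d\alpha_\lambda + [\alpha_\lambda \wedge \alpha_\lambda] = 0$ holds iff $Q$ is holomorphic and $u$ solves the reduced Gauss equation; (2) the Sym-Bobenko formula produces a CMC immersion with the stated geometric data.

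For part (1): this is a direct computation. Write $\alpha_\lambda = A\,dz + B\,d\bar z$ where $A, B$ are the $\mathfrak{sl}_2(\mathbb{C})$-valued coefficient matrices read off from the given matrix. Then $d\alpha_\lambda = (A_{\bar z} - B_z)\,d\bar z \wedge dz$... wait, I need to be careful: $d\alpha_\lambda = A_{\bar z}\,d\bar z\wedge dz + B_z\,dz\wedge d\bar z = (B_z - A_{\bar z})\,dz\wedge d\bar z$, and $[\alpha_\lambda\wedge\alpha_\lambda] = [A,B]\,dz\wedge d\bar z - [B,A]\,d\bar z\wedge dz = 2[A,B]\,dz\wedge d\bar z$. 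So the equation becomes $2(B_z - A_{\bar z}) + 2[A,B] = 0$, i.e. $B_z - A_{\bar z} + [A,B] = 0$. Then substitute the explicit entries (involving $u_z, u_{\bar z}, e^{\pm u}, \lambda, Q, \bar Q$), compute the four matrix entries, and collect terms. The diagonal entries will yield the Gauss equation $2u_{z\bar z} + \tfrac12(e^{2u} - Q\bar Q e^{-2u}) = 0$ after using $\lambda\bar\lambda$-independence appropriately — actually since $\lambda$ is a free parameter, the off-diagonal entries split into a $\lambda^{-1}$-part and a $\lambda$-part (and $\lambda^0$ pieces): the $\lambda$-coefficient of the lower-left entry gives $\bar Q_z = 0$, equivalently $Q_{\bar z}=0$ by conjugation, while the $\lambda^{-1}$-coefficient of the upper-right entry gives $Q_{\bar z}=0$; the $\lambda^0$ diagonal part gives the Gauss equation. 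I would organize this as: "A straightforward computation of the four entries of $B_z - A_{\bar z} + [A,B]$, together with the observation that $\lambda \in \mathbb{C}^*$ is arbitrary, shows that the vanishing is equivalent to $Q_{\bar z}=0$ and the reduced Gauss equation." The only subtlety is bookkeeping; there is no conceptual obstacle.

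For part (2): here I would not reprove everything from scratch but cite Bobenko \cite{bob2} and \cite{Kilian_Schmidt_infinitesimal}, Theorem 1.1, since the statement is explicitly flagged as "the following version of a result by Bobenko." Still, I would indicate the structure: since $2d\alpha_\lambda + [\alpha_\lambda\wedge\alpha_\lambda]=0$ for all $\lambda \in \mathbb{C}^*$, the family $\alpha_\lambda$ integrates (on the simply connected domain $\mathbb{C}$, after fixing an initial condition $F_\lambda(0) = \mathbbm{1}$) to a smooth family of frames $F_\lambda : \mathbb{C} \to SL(2,\mathbb{C})$ with $F_\lambda^{-1}dF_\lambda = \alpha_\lambda$; for $\lambda \in \mathbb{S}^1$ one checks $\alpha_\lambda$ takes values in $\mathfrak{su}(2)$ (the reality condition on $u$ and the placement of the $i$'s are exactly what makes $\bar\alpha_\lambda^t = -\alpha_\lambda$ when $|\lambda|=1$), so $F_\lambda$ takes values in $SU(2)$. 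Then $f = F_{\lambda_1}F_{\lambda_0}^{-1}$ maps into $SU(2)\simeq \mathbb{S}^3$, and one computes $f^{-1}df = F_{\lambda_0}(\alpha_{\lambda_1} - \alpha_{\lambda_0})F_{\lambda_0}^{-1}$; reading off the $(1,0)$ and $(0,1)$ parts and using $\lambda_k = e^{it_k}$ gives the induced metric (hence conformality and the conformal factor $v = e^u/\sqrt{H^2+1}$), the constant value $H = i\frac{\lambda_0+\lambda_1}{\lambda_0-\lambda_1} = \cot(t_0-t_1)$ of the mean curvature, and the Hopf differential $\widetilde Q\,dz^2$ with $\widetilde Q = -\tfrac{i}{4}(\lambda_1^{-1}-\lambda_0^{-1})Q$.

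The main obstacle is purely the length and care of the entry-by-entry computation in part (1) — keeping track of signs, of the wedge orientation $dz\wedge d\bar z = -d\bar z\wedge dz$, and of which power of $\lambda$ each term carries — rather than anything structural; and in part (2), the cleanest route is to defer to the cited references for the Sym-Bobenko part, reproving only the zero-curvature equivalence in full detail here.
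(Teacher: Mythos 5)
Your plan is sound: the paper itself gives no proof of this theorem --- it is stated as a version of Bobenko's result with references to \cite{bob2} and \cite{Kilian_Schmidt_infinitesimal} --- so reproving the zero-curvature equivalence by the direct entry-wise computation of $\partial_z B - \partial_{\bar z}A + [A,B] = 0$ (with $\alpha_\lambda = A\,dz + B\,d\bar z$) and deferring the Sym--Bobenko part to the cited sources is exactly the right division of labour. One small correction to your bookkeeping: when the computation is carried out, the $\lambda^{\pm1}$ terms in the off-diagonal entries cancel identically, and it is the $\lambda^0$ part of the upper-right (resp.\ lower-left) entry that reduces to $\tfrac{i}{2}e^{-u}\overline{Q}_z$ (resp.\ $-\tfrac{i}{2}e^{-u}Q_{\bar z}$), so the holomorphicity of $Q$ falls out without invoking the freedom of $\lambda$, while the diagonal entries give the reduced Gauss equation as you say.
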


\begin{assumption} \label{assumption_Q_const}
Let us assume that the Hopf differential $Q$ is constant with $|Q| = 1$.
\end{assumption}

\subsection{The monodromy}

The central object for the following considerations is the monodromy $M_{\lambda}$ of a frame $F_{\lambda}$.

\begin{definition}
Let $F_{\lambda}$ be an extended frame and assume that $\alpha_{\lambda} = F_{\lambda}^{-1}dF_{\lambda}$ has period $\mathbf{p} \in \CC$, i.e. $\alpha_{\lambda}(z+\mathbf{p}) = \alpha_{\lambda}(z)$. Then the \textbf{monodromy} of the frame $F_{\lambda}$ with respect to the period $\mathbf{p}$ is given by
$$
M_{\lambda}^{\mathbf{p}} := F_{\lambda}(z+\mathbf{p})F_{\lambda}^{-1}(z).
$$
\end{definition}

Note that we have $d M_{\lambda}^{\mathbf{p}} = 0$
%\begin{eqnarray*}
%d M_{\lambda}^{\mathbf{p}} & = & F_{\lambda}(z+\mathbf{p}) \alpha_{\lambda}(z+\mathbf{p})F_{\lambda}^{-1}(z) - F_{\lambda}(z+\mathbf{p}) \alpha_{\lambda}(z)F_{\lambda}^{-1}(z) \\
%& = & 0,
%\end{eqnarray*}
%since $\alpha_{\lambda}(z+\mathbf{p}) = \alpha_{\lambda}(z)$ 
and thus $M_{\lambda}^{\mathbf{p}}$ does not depend on $z$. Setting $F_{\lambda}(0) = \unity$ we get 
$$
M_{\lambda} := M_{\lambda}^{\mathbf{p}} = F_{\lambda}(\mathbf{p})F_{\lambda}^{-1}(0) = F_{\lambda}(\mathbf{p}).
$$

\subsection{Isometric normalization}
We can rotate the coordinate $z$ by a map $z \mapsto w(z) = e^{i\varphi}z$ in such a way that $\Im(\mathbf{p}) = 0$. As a consequence the extended frame $F_{\lambda}$ is multiplied with the matrix
%Since
%$$
%u(z) = \widetilde{u}(e^{i\varphi}z) + \ln(|\tfrac{d}{dz}(e^{i\varphi}z)|) = \widetilde{u}(e^{i\varphi}z)
%$$
%we get
%$$
%\widetilde{u}(e^{i\varphi}z) = u(z) = u(z+\widetilde{\mathbf{p}}) = \widetilde{u}(e^{i\varphi}(z+\widetilde{\mathbf{p}})) = \widetilde{u}(e^{i\varphi}z+\mathbf{p})
%$$
%for a suitable $\varphi \in [0,2\pi)$ and $\mathbf{p} = e^{i\varphi}\widetilde{\mathbf{p}}$. 
$$
B_{w} = \begin{pmatrix} \delta^{-1/2} & 0 \\ 0 & \delta^{1/2} \end{pmatrix}
$$
with $\delta = e^{i\varphi} \in \SS^1$. This corresponds to the isometric normalization described in \cite{Hauswirth_Kilian_Schmidt_1}, Remark 1.5, and the corresponding gauged $\alpha_{\lambda}$ is of the form
\beq
 \alpha_{\lambda} = \frac{1}{2}
\begin{pmatrix}
 u_z dz - u_{\bar{z}} d\bar{z} & i\lambda^{-1}\delta e^{u}dz + i\overline{\gamma}e^{-u} d\bar{z} \\
 i\gamma e^{-u} dz + i\lambda \bar{\delta} e^u d\bar{z} & -u_z dz + u_{\bar{z}}d\bar{z}
\end{pmatrix},
\label{eq_alpha}
\ee
where the constant $\gamma \in \SS^1$ is given by $\gamma = \delta^{-1} Q = \bar{\delta}Q$.

\subsection{The sinh-Gordon equation} 
We can normalize the above parametrization with $\delta = 1$ and $|\gamma| = 1$ in \eqref{eq_alpha} by choosing the appropriate value for $Q \in \SS^1$. Then we can consider the system
$$
dF_{\lambda} = F_{\lambda}\alpha_{\lambda} \;\; \text{ with } \;\;  F_{\lambda}(0) = \unity.
$$
%for
%$$
%F(z,\lambda): \CC \times \CC^* \to SL(2,\CC)
%$$
%and
%\[
% \alpha_{\lambda} = \frac{1}{2}
%\begin{pmatrix}
% u_z dz - u_{\bar{z}} d\bar{z} & i\lambda^{-1} e^{u}dz + i\overline{\gamma}e^{-u} d\bar{z} \\
% i\gamma e^{-u} dz + i\lambda e^u d\bar{z} & -u_z dz + u_{\bar{z}}d\bar{z}
%\end{pmatrix}.
%\]
Since $|\gamma| = 1$, wee see that the compatibility condition $2d\alpha_{\lambda} + [\alpha_{\lambda} \wedge \alpha_{\lambda}] = 0$ from Theorem \ref{Alpha_mit_Lambda} holds if and only if 
\beq
2u_{z\bar{z}} + \tfrac{1}{2}(e^{2u} - \gamma\overline{\gamma}e^{-2u}) = 2u_{z\bar{z}} + \sinh(2u) = 0.
\label{eq_sinh}
\ee
Thus the reduced Gauss equation turns into the \textbf{sinh-Gordon equation} in that situation. 
%The monodromy of $F$ is then $M_{\lambda} = F(\mathbf{p},\lambda)$ for a period $\mathbf{p}$ of the solution $u$ of the $\sinh$-Gordon equation. 
For the following we make an additional assumption.

\begin{assumption}\label{assumption_gamma_1}
Let $\gamma = \delta = 1$. This yields
\[
 \alpha_{\lambda} = \frac{1}{2}
\begin{pmatrix}
 u_z dz - u_{\bar{z}} d\bar{z} & i\lambda^{-1}e^u dz + ie^{-u}d\bar{z} \\
ie^{-u}dz + i\lambda e^u d\bar{z} & -u_z dz + u_{\bar{z}}d\bar{z}
\end{pmatrix}.
\]
\end{assumption}

If we evaluate $\alpha_{\lambda}$ along the vector fields $\tfrac{\partial}{\partial x}$ and $\tfrac{\partial}{\partial y}$ we obtain
$$
U_{\lambda} := \alpha_{\lambda}(\tfrac{\partial}{\partial x}), \;\;\;\; V_{\lambda} := \alpha_{\lambda}(\tfrac{\partial}{\partial y}).
$$
These matrices will be important for the upcoming considerations. In particular $U_{\lambda}$ reads
\beq
U_{\lambda} = \frac{1}{2}
\begin{pmatrix}
 -i u_y & i\lambda^{-1}e^u + ie^{-u} \\
i\lambda e^u + ie^{-u} & iu_y
\end{pmatrix}.
\label{eq_U_lambda}
\ee

\begin{remark}\label{isomorphism_U_lambda}
%Due to the one-to-one correspondence $(u,u_y) \mapsto U_{\lambda}$ with 
%$$
%U_{\lambda}=
%\frac{1}{2}
%\begin{pmatrix}
% -i u_y & i\lambda^{-1}e^u + ie^{-u} \\
%i\lambda e^u + ie^{-u} & iu_y
%\end{pmatrix}
%$$
Due to \eqref{eq_U_lambda} we can identify the tuple $(u,u_y)$ with the matrix $U_{\lambda}$.
\end{remark}

\section{A formal diagonalization of the monodromy $M_{\lambda}$}
We want to diagonalize the monodromy $M_{\lambda}$ and therefore need to diagonalize $\alpha_{\lambda}$. A diagonalization for the Schr\"odinger-operator is done in \cite{Schmidt_infinite} based on a result from \cite{Haak_Schmidt_Schrader}. In order to adapt the techniques applied there we search for a $\lambda$-dependent periodic formal power series $\widehat{g}_{\lambda}(x)$ such that
$$
\widehat{\beta}_{\lambda} = \widehat{g}_{\lambda}^{-1}\alpha_{\lambda}\widehat{g}_{\lambda} + \widehat{g}_{\lambda}^{-1}\tfrac{d}{dx}\widehat{g}_{\lambda}
$$
is a diagonal matrix, i.e. 
$$
\widehat{\beta}_{\lambda}(x) = \begin{pmatrix} \sum_m (\sqrt{\lambda})^m b_m(x) & 0 \\ 0 & -\sum_m (\sqrt{\lambda})^m b_m(x) \end{pmatrix}
$$
with $m \geq -1$. Since $F_{\lambda}(x) = \widehat{g}_{\lambda}(0)\widehat{G}_{\lambda}(x)\widehat{g}_{\lambda}(x)^{-1}$ (where $\widehat{G}_{\lambda}$ solves $\frac{d}{dx} \widehat{G}_{\lambda}(x) = \widehat{G}_{\lambda}(x) \widehat{\beta}_{\lambda}(x)$ with $\widehat{G}_{\lambda}(0) = \unity$) we get 
$$
M_{\lambda} = F_{\lambda}(\mathbf{p}) = \widehat{g}_{\lambda}(0)\widehat{G}_{\lambda}(\mathbf{p})\widehat{g}_{\lambda}(\mathbf{p})^{-1} = \widehat{g}_{\lambda}(0)\widehat{G}_{\lambda}(\mathbf{p})\widehat{g}_{\lambda}(0)^{-1}
$$
and due to
\[
\widehat{G}_{\lambda}(x) = \begin{pmatrix} \exp\left(\int_0^x \sum_m (\sqrt{\lambda})^m b_m(t)\, dt\right) & 0 \\ 0 & \exp\left(-\int_0^x \sum_m (\sqrt{\lambda})^m b_m(t)\, dt\right) \end{pmatrix}
\]
%one has
%\[
%\widehat{G}_{\lambda}(\mathbf{p}) = \begin{pmatrix} \exp\left(\int_0^{\mathbf{p}} \sum_m (\sqrt{\lambda})^m b_m(t)\, dt\right) & 0 \\ 0 & \exp\left(-\int_0^{\mathbf{p}} \sum_m (\sqrt{\lambda})^m b_m(t)\, dt\right) \end{pmatrix}.
%\]
%The conjugation with the matrix $\widehat{g}_{\lambda}(0)$ leaves the eigenvalues $\mu, \frac{1}{\mu}$ of $M_{\lambda}$ invariant and thus 
we obtain for the eigenvalue $\mu$ of $M_{\lambda}$
$$
\mu = \exp\left(\int_0^{\mathbf{p}} \sum_m (\sqrt{\lambda})^m b_m(t)\, dt\right) \;\;\; \text{ or equivalently } \;\;\; \ln\mu = \sum_m (\sqrt{\lambda})^m \int_0^{\mathbf{p}} b_m(t)\, dt.
$$
Let us start with the following lemma that is obtained by a direct calculation.

\begin{lemma}\label{lemma_first_gauge}
By performing a gauge transformation with
\[
g_{\lambda}(z) = \frac{1}{\sqrt{2}}\begin{pmatrix} e^{\frac{u}{2}} & 0 \\ 0 & \sqrt{\lambda}e^{-\frac{u}{2}}\end{pmatrix}\begin{pmatrix} 1 & -1 \\ 1 & 1 \end{pmatrix}
\]
the frame $F_{\lambda}(z)$ is transformed into $F_{\lambda}(z)g_{\lambda}(z)$ and the map $G_{\lambda}(z) := g_{\lambda}(0)^{-1}F_{\lambda}(z)g_{\lambda}(z)$ solves
$$
dG_{\lambda} = G_{\lambda}\beta_{\lambda} \;\; \text{with } \beta_{\lambda} = g_{\lambda}^{-1}\alpha_{\lambda} g_{\lambda} + g_{\lambda}^{-1}dg_{\lambda} \text{ and } G_{\lambda}(0) = \unity.
$$
Evaluating the form $\beta_{\lambda}$ along the vector field $\frac{\partial}{\partial x}$ and setting $y=0$ 
yields $\beta_{\lambda}(\frac{\partial}{\partial x}) = \tfrac{1}{\sqrt{\lambda}}\beta_{-1} + \beta_0 + \sqrt{\lambda}\beta_1$ with
\[
 \beta_{-1} = \begin{pmatrix} \tfrac{i}{2} & 0 \\ 0 & -\tfrac{i}{2} \end{pmatrix}, \;\; \beta_0 = \begin{pmatrix} 0 & -u_z \\ -u_z & 0 \end{pmatrix}, \;\; \beta_1 = \begin{pmatrix} \frac{i}{2}\cosh(2u) & - \frac{i}{2}\sinh(2u) \\ \frac{i}{2}\sinh(2u) &  -\frac{i}{2}\cosh(2u) \end{pmatrix}.
\]
\end{lemma}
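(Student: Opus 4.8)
The plan is to carry out the gauge transformation explicitly and read off the coefficients of the resulting $\lambda$-dependent one-form evaluated along $\partial/\partial x$. First I would record the data we start from: by Assumption \ref{assumption_gamma_1} the connection form $\alpha_\lambda$ is the explicit matrix displayed there, and evaluating along $\partial/\partial x$ at $y=0$ gives precisely $U_\lambda$ from \eqref{eq_U_lambda}, namely $U_\lambda = \tfrac12\begin{pmatrix} -iu_y & i\lambda^{-1}e^u + ie^{-u} \\ i\lambda e^u + ie^{-u} & iu_y\end{pmatrix}$. Writing $g_\lambda = \tfrac{1}{\sqrt2}\,D_\lambda\,P$ with $D_\lambda = \diag(e^{u/2}, \sqrt\lambda\, e^{-u/2})$ and $P = \begin{pmatrix} 1 & -1 \\ 1 & 1\end{pmatrix}$, note $P^{-1} = \tfrac12 P^t = \tfrac12\begin{pmatrix} 1 & 1 \\ -1 & 1\end{pmatrix}$, so $g_\lambda^{-1} = \sqrt2\, P^{-1} D_\lambda^{-1}$.

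The second step is the computation of $\beta_\lambda(\partial/\partial x) = g_\lambda^{-1} U_\lambda g_\lambda + g_\lambda^{-1}\partial_x g_\lambda$. For the first term, the conjugation by $P$ is the standard change of basis that diagonalizes the off-diagonal constant matrix, while conjugation by $D_\lambda$ redistributes powers of $\sqrt\lambda$; carrying this out term by term in $U_\lambda$ produces contributions at orders $\lambda^{-1/2}$, $\lambda^0$ and $\lambda^{1/2}$ (the $u_y$-part and the $e^{\pm u}$-parts land in different powers after the $D_\lambda$-conjugation). For the second term, only $D_\lambda$ depends on $x$ through $u$, so $\partial_x g_\lambda = \tfrac{1}{\sqrt2}(\partial_x D_\lambda)P$ with $\partial_x D_\lambda = \tfrac{u_x}{2}\diag(e^{u/2}, -\sqrt\lambda\, e^{-u/2})$, giving $g_\lambda^{-1}\partial_x g_\lambda = P^{-1}\,\tfrac{u_x}{2}\diag(1,-1)\,P = \tfrac{u_x}{2}\begin{pmatrix} 0 & -1 \\ -1 & 0\end{pmatrix}$, a purely $\lambda^0$ off-diagonal contribution. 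Adding everything and using $u_x = 2u_z$ at $y=0$ (more precisely $u_x = u_z + u_{\bar z}$ in general, but along the real axis with the relevant reality we get the stated $-u_z$ entries — one should track the $u_z$ versus $u_x$ bookkeeping carefully here), together with $\cosh(2u) = \tfrac12(e^{2u}+e^{-2u})$ and $\sinh(2u) = \tfrac12(e^{2u}-e^{-2u})$ to repackage the $e^{\pm 2u}$ terms that appear at order $\lambda^{1/2}$, yields exactly $\beta_{-1}, \beta_0, \beta_1$ as claimed.

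The only genuinely delicate point is the bookkeeping in the $\beta_0$ term: one must be careful whether the relevant derivative is $u_x$, $u_z$, or a combination, since $\alpha_\lambda$ mixes $dz$ and $d\bar z$ while we evaluate on $\partial/\partial x$, and the stated answer has $-u_z$ (not $-u_x$) in the off-diagonal slots. Resolving this requires using $dz(\partial_x) = d\bar z(\partial_x) = 1$ so that $\alpha_\lambda(\partial_x)$ collects both the $dz$- and $d\bar z$-coefficients, after which the $u_z dz - u_{\bar z} d\bar z$ diagonal part of $\alpha_\lambda$ contributes $u_z - u_{\bar z}$ and the $x$-derivative term contributes the $u_x = u_z + u_{\bar z}$ piece, and one checks these combine correctly. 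Everything else is a mechanical $2\times 2$ matrix multiplication, so I would simply present the three resulting matrices and note that the identity is verified by direct calculation, exactly as the lemma asserts.
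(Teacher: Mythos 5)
Your plan is correct and coincides with the paper's own treatment (the paper offers no written proof beyond the remark that the lemma ``is obtained by a direct calculation''): conjugating $U_\lambda$ by $g_\lambda$ and adding $g_\lambda^{-1}\partial_x g_\lambda$ does yield exactly the stated $\beta_{-1},\beta_0,\beta_1$, and your computations of $P^{-1}$ and of $g_\lambda^{-1}\partial_x g_\lambda = -\tfrac{u_x}{2}\left(\begin{smallmatrix}0&1\\1&0\end{smallmatrix}\right)$ are right. One caveat: the identity ``$u_x=2u_z$ at $y=0$'' that you invoke in passing is false (it would force $u_y\equiv 0$); the $-u_z$ entries arise instead because the conjugation by $P$ moves the diagonal entry $-\tfrac{i}{2}u_y$ of $U_\lambda$ into the off-diagonal slots as $+\tfrac{i}{2}u_y$, which added to the $-\tfrac{u_x}{2}$ from the derivative term gives $-\tfrac{1}{2}(u_x-iu_y)=-u_z$ with no reality or $y=0$ argument needed --- this is precisely the bookkeeping you correctly describe in your final paragraph, so only the parenthetical shortcut should be deleted.
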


From the following theorem we obtain a periodic formal power series $\widetilde{g}_{\lambda}(x)$ of the form $\widetilde{g}_{\lambda}(x) = \unity + \sum_{m\geq 1} a_m(x)(\sqrt{\lambda})^m$ such that $\widehat{g}_{\lambda}(x) := g_{\lambda}(x) \widetilde{g}_{\lambda}(x)$ (with $g_{\lambda}(x)$ defined as in Lemma \ref{lemma_first_gauge}) diagonalizes $\alpha_{\lambda}$ around $\lambda = 0$.

\begin{theorem}\label{expansion}
Let $(u,u_y) \in C^{\infty}(\RR/\mathbf{p} \ZZ) \times C^{\infty}(\RR/\mathbf{p} \ZZ)$. Then there exist two series
\begin{gather*}
 a_1(x), a_2(x),\ldots \;\;\; \in \text{span}\{\epsilon_+,\epsilon_-\} \text{ of periodic off-diagonal matrices and} \\
 b_1(x), b_2(x),\ldots \;\;\; \in \text{span}\{\epsilon\} \text{ of periodic diagonal matrices, respectively}
\end{gather*}
such that $a_{m+1}(x)$ and $b_m(x)$ are differential polynomials in $u$ and $u_y$ with derivatives of order $m$ at most and the following equality for formal power series holds asympotically around $\lambda = 0$:
\begin{gather}
\beta_{\lambda}(x)\left(\unity + \sum_{m\geq 1} a_m(x)(\sqrt{\lambda})^m\right) + \sum_{m\geq 1} \frac{d}{dx}a_m(x)(\sqrt{\lambda})^m = \hspace{4cm} \nonumber \\ 
\hspace{5cm} \left(\unity + \sum_{m\geq 1} a_m(x)(\sqrt{\lambda})^m\right) \sum_{m \geq -1} b_m(x) (\sqrt{\lambda})^m.
\tag{$*$}
\label{eq_expansion}
\end{gather}
Here $b_{-1}(x)$ and $b_0(x)$ are given by $b_{-1}(x) \equiv \beta_{-1} = \tfrac{i}{2}\left(\begin{smallmatrix} 1 & 0 \\ 0 & -1 \end{smallmatrix}\right)$ and $b_0(x) \equiv \left(\begin{smallmatrix} 0 & 0 \\ 0 & 0 \end{smallmatrix}\right)$.
\end{theorem}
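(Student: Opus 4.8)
The plan is to read \eqref{eq_expansion} as an identity of formal Laurent series in $\nu := \sqrt{\lambda}$ and to construct the coefficients $a_m(x)$ and $b_m(x)$ one power of $\nu$ at a time. By Lemma~\ref{lemma_first_gauge} we have $\beta_{\lambda}(\tfrac{\partial}{\partial x}) = \nu^{-1}\beta_{-1} + \beta_0 + \nu\beta_1$, so with $A := \unity + \sum_{m\ge 1} a_m\nu^m$ and $B := \sum_{m\ge -1} b_m\nu^m$ the identity \eqref{eq_expansion} becomes $\beta_{\lambda}A + A' = AB$. Comparing the coefficient of $\nu^{-1}$ gives $b_{-1} = \beta_{-1}$, and for $k\ge 0$, comparing the coefficient of $\nu^{k}$ and using $a_0 = \unity$, $a_j = 0$ for $j<0$, $a_0' = 0$, yields the family of equations
\[
[\beta_{-1},\,a_{k+1}] \;=\; b_k \;+\; \sum_{i=1}^{k} a_i\,b_{k-i} \;-\; \beta_0 a_k \;-\; \beta_1 a_{k-1} \;-\; a_k' .
\]
For $k = 0$ this reads $[\beta_{-1},a_1] = b_0 - \beta_0$, whose diagonal part forces $b_0 = 0$; this recovers the two initial conditions of the statement.

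The engine of the construction is the elementary fact that $\operatorname{ad}_{\beta_{-1}}\colon X\mapsto[\beta_{-1},X]$ annihilates the diagonal matrices and restricts to an invertible map on $\operatorname{span}\{\epsilon_+,\epsilon_-\}$, with inverse again off-diagonal. I decompose every $\mathfrak{sl}_2(\CC)$-valued quantity into its diagonal and off-diagonal parts. A product of two off-diagonal matrices is diagonal, and a product of a diagonal matrix with an off-diagonal one is off-diagonal; hence in the $k$-th equation the term $\beta_0 a_k$ (and, for $k\ge 1$, the off-diagonal component of $\beta_1$ times $a_{k-1}$) is diagonal, while $a_k'$, the sums $a_i b_{k-i}$, and the diagonal component of $\beta_1$ times $a_{k-1}$ are off-diagonal. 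Since $[\beta_{-1},a_{k+1}]$ is off-diagonal, the diagonal part of the $k$-th equation does not involve the new unknown $a_{k+1}$ at all: it determines $b_k$ directly in terms of $a_1,\dots,a_k$ and $b_{-1},\dots,b_{k-1}$. The off-diagonal part then reads $\operatorname{ad}_{\beta_{-1}}(a_{k+1}) = (\text{an off-diagonal expression in already known data})$ and, by invertibility of $\operatorname{ad}_{\beta_{-1}}$ on off-diagonal matrices, has a unique off-diagonal solution $a_{k+1}$. Running this recursion for $k = 0,1,2,\dots$ produces series $(a_m)_{m\ge 1}$, $(b_m)_{m\ge -1}$ with $a_m\in\operatorname{span}\{\epsilon_+,\epsilon_-\}$ and $b_m\in\operatorname{span}\{\epsilon\}$ solving \eqref{eq_expansion}; restricting the $a_m$ to off-diagonal matrices is exactly what removes the residual gauge freedom and pins the solution down uniquely.

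It then remains to check regularity and periodicity. Since $\beta_{-1}$ is constant, $\beta_0$ depends (after setting $y=0$) on $u,u_y$ only through $u_z = \tfrac12(u'-iu_y)$, and $\beta_1$ depends only on $u$, an induction along the recursion shows that each $a_m$ and each $b_m$ is a differential polynomial in $u$ and $u_y$: the operations involved are multiplication (which does not raise the order), application of $\operatorname{ad}_{\beta_{-1}}^{-1}$ (which leaves the regularity class of the entries unchanged) and a single $\tfrac{d}{dx}$ (which raises the order by one), and bookkeeping these gives the order bounds asserted in the theorem. In particular all $a_m$ and $b_m$ are $\mathbf{p}$-periodic because $u,u_y$ are, so $\widetilde g_\lambda := \unity + \sum_{m\ge 1} a_m\nu^m$ is a periodic formal power series and $\widehat g_\lambda = g_\lambda\widetilde g_\lambda$ diagonalizes $\alpha_\lambda$ asymptotically around $\lambda = 0$.

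The step I expect to need the most care is verifying that the diagonal/off-diagonal splitting really makes the recursion triangular — that at each order the diagonal component of the equation is genuinely free of $a_{k+1}$, so that $b_k$ is determined before $a_{k+1}$ is solved for — together with the low-order bookkeeping (the contributions coming from $a_0 = \unity$, such as the diagonal part of $\beta_1$ entering $b_1$) needed to make the order estimates come out exactly as stated.
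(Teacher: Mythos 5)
Your proposal is correct and follows essentially the same route as the paper: compare coefficients of $(\sqrt{\lambda})^k$ in $\beta_\lambda A + A' = AB$, split each order into diagonal and off-diagonal parts (the diagonal part determining $b_k$ without involving $a_{k+1}$, since off-diagonal times off-diagonal is diagonal), and invert $\operatorname{ad}_{\beta_{-1}}$ on $\operatorname{span}\{\epsilon_+,\epsilon_-\}$ to solve uniquely for $a_{k+1}$. The only difference is presentational — you run one uniform recursion for all $k\ge 0$ where the paper treats the orders $(\sqrt{\lambda})^{-1},(\sqrt{\lambda})^0,(\sqrt{\lambda})^1$ separately before the general step — and your sketch of the differential-polynomial order bound is at the same level of detail as the paper's.
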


\begin{remark} 
Since we only consider finite type solutions we can guarantee that the power series in \eqref{eq_expansion} indeed are convergent, see \cite[Theorem 4.35]{Knopf_phd}.
\end{remark}

\begin{proof}
 We start the iteration with $b_0(x) \equiv \left(\begin{smallmatrix} 0 & 0 \\ 0 & 0 \end{smallmatrix}\right)$ and will inductively solve the given ansatz in all powers of $\sqrt{\lambda}$:
\begin{enumerate}
\item $(\sqrt{\lambda})^{-1}$: $\beta_{-1} = \beta_{-1}$. $\checkmark$
\item $(\sqrt{\lambda})^0$: $\beta_{-1}a_1(x) + \beta_0(x) = b_0(x) = 0$ and thus 
$$
a_1(x) = -\beta_{-1}^{-1}\beta_0(x) = \begin{pmatrix} 0 & -i\del u \\ i\del u & 0 \end{pmatrix}.
$$
\item $(\sqrt{\lambda})^{1}$: $\beta_{-1}a_2(x) + \beta_0(x)a_1(x) + \beta_1(x) + \frac{d}{dx}a_1(x) = b_1(x) + a_2(x)\beta_{-1} + a_1(x)b_0(x)$. Rearranging terms and sorting with respect to diagonal ($\mathrm{d}$) and off-diagonal ($\mathrm{off}$) matrices we get two equations:
\begin{eqnarray*}
b_1(x) & = & \beta_0(x)a_1(x) + \beta_{1,\mathrm{d}}(x) \\
& = & \begin{pmatrix} 
-i(\del u)^2 + \frac{i}{2}\cosh(2u) & 0 \\ 
0 & i(\del u)^2 -\frac{i}{2} \cosh(2u) 
\end{pmatrix}, \cr
[\beta_{-1},a_2(x)] & = & -\beta_{1,\mathrm{off}}(x) - \tfrac{d}{dx}a_1(x).
\end{eqnarray*}
In order to solve the second equation for $a_2(x)$ we make the following observation: set $a(x) = a_+(x) \epsilon_+ + a_-(x)\epsilon_-$. Since $[\epsilon,\epsilon_+] = 2i\epsilon_+$ and $[\epsilon,\epsilon_-] = -2i\epsilon_-$, we can define a linear map $\phi: \text{span} \{\epsilon_+,\epsilon_-\} \to \text{span} \{\epsilon_+,\epsilon_-\}$ by
$$
\phi(a(x)) := [\beta_{-1},a(x)] = [\tfrac{1}{2}\epsilon, a(x)] = ia_+(x)\epsilon_+ - ia_-(x)\epsilon_- \in \text{span} \{\epsilon_+,\epsilon_-\}.
$$
Obviously $\ker(\phi) = \{ 0\}$ and thus $\phi$ is an isomorphism. Therefore we can uniquely solve the equation $[\beta_{-1},a_2(x)] = -\beta_{1,\mathrm{off}}(x) - \tfrac{d}{dx}a_1(x)$ and obtain $a_2(x)$.
\end{enumerate}
We now proceed inductively for $m\geq 2$ and assume that we already found $a_m(x)$ and $b_{m-1}(x)$. Consider the equation
\begin{gather*}
\beta_{-1}a_{m+1}(x) + \beta_0(x)a_m(x) + \beta_1(x)a_{m-1}(x) + \tfrac{d}{dx} a_m(x) = \hspace{2cm} \\
\hspace{5cm} b_m(x) + a_{m+1}(x)\beta_{-1} + \sum_{i=1}^m a_i(x)b_{m-i}(x)
\end{gather*}
for the power $(\sqrt{\lambda})^m$. Rearranging terms and after decomposition in the diagonal ($\mathrm{d}$) and off-diagonal ($\mathrm{off}$) part we get
\begin{eqnarray*}
b_m(x) & = & \beta_0(x)a_m(x) + \beta_{1,\mathrm{off}}(x)a_{m-1}(x), \cr
[\beta_{-1},a_{m+1}(x)] & = & - \beta_{1,\mathrm{d}}(x)a_{m-1}(x)- \tfrac{d}{dx}a_m(x)  + \sum_{i=1}^m a_i(x)b_{m-i}(x).
\end{eqnarray*}
From the discussion above we see that these equations can uniquely be solved and one obtains $a_{m+1}(x)$ and $b_m(x)$. By induction one therefore obtains a unique formal solution of \eqref{eq_expansion} with the desired properties.
\end{proof}

With the help of Theorem \ref{expansion} we can reproduce Proposition 3.6 presented in \cite{Kilian_Schmidt_infinitesimal}.

\begin{corollary}\label{expansion_mu}
The logarithm $\ln\mu$ of the eigenvalue $\mu$ of the monodromy $M_{\lambda}$ has the following asymptotic expansion
\begin{eqnarray*}
\ln\mu & = & \tfrac{1}{\sqrt{\lambda}}\tfrac{i\mathbf{p}}{2} + \sqrt{\lambda}\int_0^{\mathbf{p}}\left(-i(\del u)^2 + \tfrac{i}{2}\cosh(2u)\right)\,dt + O(\lambda) \; \text{ at } \lambda = 0.
%\ln\mu & = & \sqrt{\lambda}\tfrac{i\mathbf{p}}{2} + \tfrac{1}{\sqrt{\lambda}}\int_0^{\mathbf{p}}\left(-i(\delbar u)^2 + \tfrac{i}{2}\cosh(2u)\right)\,dt + O(\lambda^{-1}) \;\text{ at } \lambda = \infty.
\end{eqnarray*}
\end{corollary}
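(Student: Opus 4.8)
The plan is to read the expansion off directly from the formal diagonalization of Theorem~\ref{expansion}; the only genuine issue is to check that the conjugating gauge drops out of the eigenvalue. First I would note that the gauge $\widehat{g}_{\lambda}(x) = g_{\lambda}(x)\widetilde{g}_{\lambda}(x)$ is $\mathbf{p}$-periodic in $x$: the factor $g_{\lambda}$ of Lemma~\ref{lemma_first_gauge} depends on $x$ only through $u$, which is $\mathbf{p}$-periodic, and $\widetilde{g}_{\lambda} = \unity + \sum_{m\geq 1} a_m(x)(\sqrt{\lambda})^m$ is periodic by construction in Theorem~\ref{expansion}. Since after the isometric normalization $\mathbf{p}$ is real, the monodromy is taken along the $x$-axis, so it suffices to work with $\beta_{\lambda}(\tfrac{\partial}{\partial x})$ evaluated at $y=0$, which is exactly the $1$-form diagonalized in Theorem~\ref{expansion}.

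Next, from $F_{\lambda}(x) = \widehat{g}_{\lambda}(0)\widehat{G}_{\lambda}(x)\widehat{g}_{\lambda}(x)^{-1}$ and the periodicity of $\widehat{g}_{\lambda}$ we obtain
\[
M_{\lambda} = F_{\lambda}(\mathbf{p}) = \widehat{g}_{\lambda}(0)\,\widehat{G}_{\lambda}(\mathbf{p})\,\widehat{g}_{\lambda}(\mathbf{p})^{-1} = \widehat{g}_{\lambda}(0)\,\widehat{G}_{\lambda}(\mathbf{p})\,\widehat{g}_{\lambda}(0)^{-1},
\]
so that $M_{\lambda}$ is conjugate to the diagonal matrix $\widehat{G}_{\lambda}(\mathbf{p})$. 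Hence its eigenvalue $\mu$ equals the upper-left entry of $\widehat{G}_{\lambda}(\mathbf{p})$, and therefore
\[
\ln\mu = \int_0^{\mathbf{p}} \sum_{m\geq -1} (\sqrt{\lambda})^m\, b_m(t)\, dt = \sum_{m\geq -1} (\sqrt{\lambda})^m \int_0^{\mathbf{p}} b_m(t)\, dt,
\]
where here $b_m$ denotes the $(1,1)$-entry of the corresponding matrix supplied by Theorem~\ref{expansion}.

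Finally I would substitute the first coefficients. By Theorem~\ref{expansion} the $(1,1)$-entry of $b_{-1}$ is $\tfrac{i}{2}$ and $b_0\equiv 0$; from the step $(\sqrt{\lambda})^1$ in the proof of Theorem~\ref{expansion} (equivalently from $b_1 = \beta_0 a_1 + \beta_{1,\mathrm{d}}$ with the matrices of Lemma~\ref{lemma_first_gauge}) the $(1,1)$-entry of $b_1$ is $-i(\del u)^2 + \tfrac{i}{2}\cosh(2u)$. Thus the $(\sqrt{\lambda})^{-1}$ term contributes $\tfrac{1}{\sqrt{\lambda}}\int_0^{\mathbf{p}}\tfrac{i}{2}\,dt = \tfrac{1}{\sqrt{\lambda}}\tfrac{i\mathbf{p}}{2}$, the $(\sqrt{\lambda})^0$ term vanishes, the $(\sqrt{\lambda})^1$ term equals $\sqrt{\lambda}\int_0^{\mathbf{p}}\big(-i(\del u)^2 + \tfrac{i}{2}\cosh(2u)\big)\,dt$, and all terms of order $(\sqrt{\lambda})^2$ and higher are $O(\lambda)$, which is the claimed expansion.

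The one point requiring care --- and the reason the result is phrased as an asymptotic expansion at $\lambda = 0$ --- is convergence: a priori Theorem~\ref{expansion} only provides a formal power series in $\sqrt{\lambda}$, so one must invoke the finite type hypothesis together with the convergence statement cited in the preceding remark (\cite[Theorem~4.35]{Knopf_phd}) to know that this series genuinely represents the analytic function $\ln\mu$ near $\lambda = 0$, with a consistent choice of branch for $\sqrt{\lambda}$ and $\ln$, and that the remainder is honestly $O(\lambda)$. Apart from this, the argument is pure bookkeeping.
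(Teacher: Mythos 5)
Your argument is correct and follows essentially the same route as the paper: the identity $\ln\mu = \sum_m (\sqrt{\lambda})^m \int_0^{\mathbf{p}} b_m(t)\,dt$ is established in the paper in the paragraph preceding Theorem~\ref{expansion} (via the same conjugation $M_{\lambda} = \widehat{g}_{\lambda}(0)\widehat{G}_{\lambda}(\mathbf{p})\widehat{g}_{\lambda}(0)^{-1}$ and periodicity of the gauge), and the proof of the corollary then just substitutes $b_{-1} = \tfrac{i}{2}$, $b_0 = 0$ and $b_1 = -i(\del u)^2 + \tfrac{i}{2}\cosh(2u)$ exactly as you do. Your explicit remarks on the periodicity of $\widehat{g}_{\lambda}$ and on convergence of the formal series in the finite type case make precise what the paper leaves implicit, but do not change the argument.
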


\begin{proof}
From Theorem \ref{expansion} we know that at $\lambda = 0$ we have
\begin{eqnarray*}
\ln\mu & = & \tfrac{1}{\sqrt{\lambda}}\tfrac{i\mathbf{p}}{2} + \sqrt{\lambda}\int_0^{\mathbf{p}}b_1(t)\, dt + \sum_{m\geq 2} (\sqrt{\lambda})^m \int_0^{\mathbf{p}} b_m(t)\, dt \\
& = & \tfrac{1}{\sqrt{\lambda}}\tfrac{i\mathbf{p}}{2} + \sqrt{\lambda}\int_0^{\mathbf{p}}\left(-i(\del u)^2 + \tfrac{i}{2}\cosh(2u)\right)\,dt + O(\lambda).
\end{eqnarray*}
%The equation $M_{\lambda} = \left(\overline{M}_{\bar{\lambda}^{-1}}^t\right)^{-1}$ implies $\mu(\lambda) = \bar{\mu}^{-1}(\bar{\lambda}^{-1})$. Thus the expansion of $\ln\mu(\lambda)$ at $\lambda = \infty$ is equal to the expansion of $-\overline{\ln\mu(\bar{\lambda}^{-1})}$ at $\lambda = 0$ and one obtains
%$$
%\ln\mu = \sqrt{\lambda}\tfrac{i\mathbf{p}}{2} + \tfrac{1}{\sqrt{\lambda}}\int_0^{\mathbf{p}}\left(-i(\delbar u)^2 + \tfrac{i}{2}\cosh(2u)\right)\,dt + O(\lambda^{-1}) \;\text{ at } \lambda = \infty.
%$$
\end{proof}

\section{Polynomial Killing fields for finite type solutions}

In the following we will consider the variable $y$ as a flow parameter. 
%Expanding the matrices $U_{\lambda}$ and $V_{\lambda}$ with respect to this flow parameter $y$ we get for $U_\lambda$
%\begin{eqnarray*}
%U_\lambda(x,y) & = & U_{\lambda}(x,0) + y\delta U_{\lambda}(x) + O(y^2), \\
%\tfrac{d}{dx}U_{\lambda}(x,y) & = & \tfrac{d}{dx}U_{\lambda}(x,0) + y\tfrac{d}{dx}\delta U_{\lambda}(x) + O(y^2), \\
%\tfrac{d}{dy}U_{\lambda}(x,y) & = & \delta U_{\lambda}(x) + O(y)
%\end{eqnarray*}
%and for $V_{\lambda}$ the equations
%\begin{eqnarray*}
%V_{\lambda}(x,y) & = & V_{\lambda}(x,0) + y\delta V_{\lambda}(x) + O(y^2), \\
%\tfrac{d}{dx}V_{\lambda}(x,y) & = & \tfrac{d}{dx}V_{\lambda}(x,0) + y\tfrac{d}{dx}\delta V_{\lambda}(x) + O(y^2).
%\end{eqnarray*}
If we define the Lax operator $L_{\lambda}:= \tfrac{d}{dx} + U_{\lambda}$, the zero-curvature condition can be rewritten as Lax equation via
$$
\tfrac{d}{dy}U_{\lambda} - \tfrac{d}{dx}V_{\lambda} - [U_{\lambda},V_{\lambda}] = 0 \;\; \Longleftrightarrow \;\; \tfrac{d}{dy}L_{\lambda} = [L_{\lambda},V_{\lambda}].
$$
This corresponds to the $\sinh$-Gordon flow and there exist analogous equations for the "higher" flows. In the finite type situation there exists a linear combination of these higher flows such that the corresponding Lax equation is stationary, i.e. there exists a map $W_{\lambda}$ such that
$$
\tfrac{d}{dy}L_{\lambda} = [L_{\lambda},W_{\lambda}] = \tfrac{d}{dx}W_{\lambda} + [U_{\lambda},W_{\lambda}] \equiv 0.
$$
%we obtain the following equation with respect to the constant term $y = 0$
%$$
%\delta U_{\lambda}(x) - \tfrac{d}{dx}V_{\lambda}(x,0) - [U_{\lambda}(x,0),V_{\lambda}(x,0)] = 0
%$$
%and therefore with the Lax operator $L_{\lambda}(x) := \tfrac{d}{dx} + U_{\lambda}(x,0)$
%$$
%\delta L_{\lambda}(x) = \delta U_{\lambda}(x) =\tfrac{d}{dx}V_{\lambda}(x,0) + [U_{\lambda}(x,0),V_{\lambda}(x,0)] = [L_{\lambda}(x), V_{\lambda}(x,0)].
%$$
%If we replace $V_{\lambda}(x,0)$ by a map $W_{\lambda}(x)$ solving $\tfrac{d}{dx} W_{\lambda}(x) = [W_{\lambda}(x),U_{\lambda}(x,0)]$ we get for the constant term
%$$
%\delta L_{\lambda} = \delta U_{\lambda}(x,0) = [L_{\lambda},W_{\lambda}(x)] = \tfrac{d}{dx}W_{\lambda}(x) + [U_{\lambda}(x,0),W_{\lambda}(x)] \equiv 0,
%$$
%i.e. the corresponding Lax equation $\delta L_{\lambda} = [L_{\lambda},W_{\lambda}]$ is stationary. 
This leads to the following definition (cf. Definition 2.1 in \cite{Hauswirth_Kilian_Schmidt_1}).

\begin{definition}\label{killing1}
A pair $C^{\infty}(\RR/\mathbf{p} \ZZ) \times C^{\infty}(\RR/\mathbf{p} \ZZ) \ni (u,u_y) \simeq U_{\lambda}(\cdot,0)$ corresponding to a periodic solution of the $\sinh$-Gordon equation is of \textbf{finite type} if there exists $g \in \NN_0$ such that
$$
\Phi_{\lambda}(x) = \frac{\lambda^{-1}}{2}\begin{pmatrix} 0 & ie^u \\ 0 & 0 \end{pmatrix} + \sum_{n=0}^g \lambda^n
\begin{pmatrix} \omega_n & e^u\tau_n \\ e^u\sigma_n & -\omega_n \end{pmatrix}
$$
is a solution of the Lax equation
$$
\frac{d}{dx}\Phi_{\lambda} = [\Phi_{\lambda},U_{\lambda}(\cdot,0)]
$$
for some periodic functions $\omega_n, \tau_n, \sigma_n: \RR/\mathbf{p} \ZZ \to \CC$.
\end{definition}

\begin{remark}
Let us justify why we can set $y=0$ in the above definition:
\begin{itemize}
\item For a solution $u$ of the $\sinh$-Gordon equation \eqref{eq_sinh} on a strip around the $y=0$ axis, that is of finite type in the sense of Definition 2.1 in \cite{Hauswirth_Kilian_Schmidt_1}, we see that $(u(\cdot,0),u_y(\cdot,0)) \in C^{\infty}(\RR/\mathbf{p} \ZZ) \times C^{\infty}(\RR/\mathbf{p} \ZZ)$ is of finite type in the sense of Definition \ref{killing1}. 
\item On the other hand, every pair $(u,u_y) \in C^{\infty}(\RR/\mathbf{p} \ZZ) \times C^{\infty}(\RR/\mathbf{p} \ZZ)$ of finite type origins from a "global" finite type solution $u: \CC \to \RR$ in the sense of Definition 2.1 in \cite{Hauswirth_Kilian_Schmidt_1}.
\end{itemize}
\end{remark}

Given a map $\widetilde{\Phi}_{\lambda}$ of the form
$$
\widetilde{\Phi}_{\lambda}(z) = \frac{\lambda^{-1}}{2}\begin{pmatrix} 0 & ie^u \\ 0 & 0 \end{pmatrix} + \sum_{n=0}^g \lambda^n
\begin{pmatrix} \widetilde{\omega}_n & e^u\widetilde{\tau}_n \\ e^u\widetilde{\sigma}_n & -\widetilde{\omega}_n \end{pmatrix}
$$
%with expansion
%\begin{eqnarray*}
%\widetilde{\Phi}_{\lambda}(x,y) & = & \widetilde{\Phi}_{\lambda}(x,0) + y\delta \widetilde{\Phi}_{\lambda}(x) + O(y^2) \\
%\tfrac{d}{dx}\widetilde{\Phi}_{\lambda}(x,y) & = & \tfrac{d}{dx}\widetilde{\Phi}_{\lambda}(x,0) + y\tfrac{d}{dx}\delta \widetilde{\Phi}_{\lambda}(x) + O(y^2)\\
%\tfrac{d}{dy}\widetilde{\Phi}_{\lambda}(x,y) & = & \delta \widetilde{\Phi}_{\lambda}(x) + O(y)
%\end{eqnarray*}
that is a solution of the Lax equation (according to Definition 2.1 in \cite{Hauswirth_Kilian_Schmidt_1})
$$
 d\widetilde{\Phi}_{\lambda} = [\widetilde{\Phi}_{\lambda},\alpha_{\lambda}] \Longleftrightarrow 
\begin{cases} 
\tfrac{d}{dx}\widetilde{\Phi}_{\lambda}(x,y) = [\widetilde{\Phi}_{\lambda}(x,y),U_{\lambda}(x,y)]  \\
\tfrac{d}{dy}\widetilde{\Phi}_{\lambda}(x,y) = [\widetilde{\Phi}_{\lambda}(x,y),V_{\lambda}(x,y)] 
\end{cases}
$$
we obtain a map $\Phi_{\lambda}$ as in Definition \ref{killing1} by setting
$$
\Phi_{\lambda}(x) := \widetilde{\Phi}_{\lambda}(x,0).
$$
Let us omit the tilde in the following proposition.

\begin{proposition}[\cite{Hauswirth_Kilian_Schmidt_1}, Proposition 2.2]\label{prop_pinkall_sterling}
Suppose $\Phi_{\lambda}$ is of the form
$$
\Phi_{\lambda}(z) = \frac{\lambda^{-1}}{2}\begin{pmatrix} 0 & ie^u \\ 0 & 0 \end{pmatrix} + \sum_{n=0}^g \lambda^n
\begin{pmatrix} \omega_n & e^u\tau_n \\ e^u\sigma_n & -\omega_n \end{pmatrix}
$$
for some $u: \CC \to \RR$, and that $\Phi_{\lambda}$ solves the Lax equation $d\Phi_{\lambda} = [\Phi_{\lambda},\alpha_{\lambda}]$. Then
\begin{enumerate}
\item[(i)] The function $u$ is a solution of the $\sinh$-Gordon equation, i.e. $\Delta u + 2\sinh(2u) = 0$.
\item[(ii)] The functions $\omega_n$ are solutions of the homog. Jacobi equation $\Delta\omega_n + 4\cosh(2u)\omega_n = 0$.
\item[(iii)] The following iteration gives a formal solution of $d\Phi_{\lambda} = [\Phi_{\lambda},\alpha_{\lambda}]$. Let $\omega_n, \sigma_n,\tau_{n-1}$ with a solution $\omega_n$ of $\Delta\omega_n + 4\cosh(2u)\omega_n = 0$ be given. Now solve the system
$$
\tau_{n,\bar{z}} = ie^{-2u}\omega_n, \;\;\;\; \tau_{n,z} = 2iu_z\omega_{n,z} - i\omega_{n,zz}
$$
for $\tau_{n,z}$ and $\tau_{n,\bar{z}}$. Then define $\omega_{n+1}$ and $\sigma_{n+1}$ by
$$
\omega_{n+1} := -i\tau_{n,z} - 2i u_z \tau_n, \;\;\;\; \sigma_{n+1} := e^{2u}\tau_n + 2i\omega_{n+1,\bar{z}}.
$$
\item[(iv)] Each $\tau_n$ is defined up to a complex constant $c_n$, so $\omega_{n+1}$ is defined up to $-2ic_n u_z$.
\item[(v)] $\omega_0 = u_z, \omega_{g-1} = c u_{\bar{z}}$ for some $c \in \CC$, and $\lambda^g\overline{\Phi_{1/\bar{\lambda}}}^t$ also solves $d\Phi_{\lambda} = [\Phi_{\lambda},\alpha_{\lambda}]$.
\end{enumerate}
\end{proposition}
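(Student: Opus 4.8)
\emph{Proof strategy.} The plan is to split the Lax equation into its $dz$- and $d\bar z$-parts, $\paz\Phi_{\lambda}=[\Phi_{\lambda},\alpha_{\lambda}']$ and $\pazba\Phi_{\lambda}=[\Phi_{\lambda},\alpha_{\lambda}'']$ where $\alpha_{\lambda}=\alpha_{\lambda}'\,dz+\alpha_{\lambda}''\,d\bar z$, and to compare coefficients of powers of $\lambda$. By Assumption~\ref{assumption_gamma_1} one has $\alpha_{\lambda}'=\lambda^{-1}\alpha_{-1}'+\alpha_{0}'$ and $\alpha_{\lambda}''=\alpha_{0}''+\lambda\,\alpha_{1}''$, while $\Phi_{\lambda}=\sum_{n=-1}^{g}\lambda^{n}\Phi_{n}$; so the $dz$-equation at order $\lambda^{n}$ becomes $\paz\Phi_{n}=[\Phi_{n},\alpha_{0}']+[\Phi_{n+1},\alpha_{-1}']$ and the $d\bar z$-equation at order $\lambda^{n}$ becomes $\pazba\Phi_{n}=[\Phi_{n},\alpha_{0}'']+[\Phi_{n-1},\alpha_{1}'']$, and from each I would read off four scalar identities (one from the diagonal, two from the off-diagonal entries). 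First I would handle the extreme orders: the $\lambda^{-1}$-coefficient of the $dz$-equation gives $\omega_{0}=u_{z}$ and $\sigma_{0}=\tfrac{i}{2}e^{-2u}$, and the $\lambda^{g+1}$-coefficient of the $d\bar z$-equation forces $\omega_{g}=\tau_{g}=0$. Feeding $\omega_{0}=u_{z}$ and $\sigma_{0}=\tfrac{i}{2}e^{-2u}$ into the $(1,1)$-entry of the $\lambda^{0}$-coefficient of the $d\bar z$-equation then produces exactly $u_{z\bar z}=-\tfrac12\sinh(2u)$, i.e.\ $\Delta u+2\sinh(2u)=0$, which is (i).

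For the interior orders, the two off-diagonal entries at order $\lambda^{n}$ give, after using the product rule to absorb the $u_{z}e^{u}$ and $u_{\bar z}e^{u}$ terms, the first-order relations $\tau_{n,\bar z}=ie^{-2u}\omega_{n}$, $\omega_{n+1}=-i\tau_{n,z}-2iu_{z}\tau_{n}$, $\sigma_{n,z}=-ie^{-2u}\omega_{n}$ and $\sigma_{n,\bar z}=-2u_{\bar z}\sigma_{n}-i\omega_{n-1}$, while the diagonal entries give $\omega_{n,z}=\tfrac{i}{2}\tau_{n}-\tfrac{i}{2}e^{2u}\sigma_{n+1}$ from the $dz$-equation and $\omega_{n,\bar z}=-\tfrac{i}{2}\sigma_{n}+\tfrac{i}{2}e^{2u}\tau_{n-1}$, the latter being the stated definition $\sigma_{n}=e^{2u}\tau_{n-1}+2i\omega_{n,\bar z}$ solved for $\omega_{n,\bar z}$. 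Differentiating $\omega_{n,z}=\tfrac{i}{2}\tau_{n}-\tfrac{i}{2}e^{2u}\sigma_{n+1}$ in $\bar z$ and substituting the expressions for $\tau_{n,\bar z}$ and $\sigma_{n+1,\bar z}$ collapses everything to $\omega_{n,z\bar z}=-\cosh(2u)\omega_{n}$, which is (ii). Eliminating $\sigma_{n+1}$ among $\omega_{n,z}=\tfrac{i}{2}\tau_{n}-\tfrac{i}{2}e^{2u}\sigma_{n+1}$, $\sigma_{n+1,z}=-ie^{-2u}\omega_{n+1}$ and $\omega_{n+1}=-i\tau_{n,z}-2iu_{z}\tau_{n}$ produces the remaining equation $\tau_{n,z}=2iu_{z}\omega_{n,z}-i\omega_{n,zz}$, so that $\tau_{n}$ is determined by its two first derivatives; compatibility of this overdetermined system is guaranteed by (i) together with the Jacobi equation for $\omega_{n}$. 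Running the recursion forward and verifying that the matrices thus produced satisfy all of these scalar identities — the Jacobi property of $\omega_{n+1}$ propagating by the same computation that gave (ii) — establishes (iii); and (iv) is then immediate, since $\tau_{n}$ is fixed only up to an additive constant $c_{n}$ and $\omega_{n+1}=-i\tau_{n,z}-2iu_{z}\tau_{n}$ changes by $-2ic_{n}u_{z}$.

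It remains to prove (v). The relation $\omega_{0}=u_{z}$ has already appeared. A direct computation using that $u$ is real shows that $\Phi_{\lambda}\mapsto\overline{\Phi_{1/\bar\lambda}}^{t}$ sends solutions of $d\Phi_{\lambda}=[\Phi_{\lambda},\alpha_{\lambda}]$ to solutions (this encodes the reality of $\alpha_{\lambda}$), and multiplying by the $z$-independent constant $\lambda^{g}$ again gives a solution; this is the last assertion of (v). For $\omega_{g-1}=c\,u_{\bar z}$ I would combine the top orders: with $\omega_{g}=\tau_{g}=0$, the $dz$-equation at order $\lambda^{g}$ shows $\sigma_{g}$ is holomorphic, the $d\bar z$-equation at order $\lambda^{g}$ gives $\sigma_{g}=e^{2u}\tau_{g-1}$ and $\sigma_{g,\bar z}=-2u_{\bar z}\sigma_{g}-i\omega_{g-1}$, and its $(1,2)$-entry at order $\lambda^{g-1}$ gives $\omega_{g-1}=-ie^{2u}\tau_{g-1,\bar z}$. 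Writing $h:=\sigma_{g}=e^{2u}\tau_{g-1}$, the first of these says $h_{z}=0$, and inserting $\omega_{g-1}=2iu_{\bar z}h-ih_{\bar z}$ into $\sigma_{g,\bar z}=-2u_{\bar z}\sigma_{g}-i\omega_{g-1}$ forces $h_{\bar z}=0$ as well; hence $h$ is a constant and $\omega_{g-1}=2ih\,u_{\bar z}$.

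The hard part is essentially bookkeeping: keeping the index shifts and signs consistent across the four scalar equations at every order, and locating the two non-obvious manipulations — the elimination that yields $\tau_{n,z}=2iu_{z}\omega_{n,z}-i\omega_{n,zz}$, and the holomorphic/antiholomorphic squeeze that makes $h$, and hence $\omega_{g-1}$, constant. It is also worth noting where the hypotheses enter: reality of $u$ is essential for the involution in (v), and the finite type assumption is what guarantees that $\Phi_{\lambda}$ genuinely terminates at order $\lambda^{g}$, so that the recursion closes.
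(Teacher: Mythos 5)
Your proposal is correct: the coefficient-by-coefficient splitting of the Lax equation into its $dz$- and $d\bar z$-parts, the eight scalar identities per order, the elimination yielding $\tau_{n,z}=2iu_z\omega_{n,z}-i\omega_{n,zz}$, and the ``squeeze'' $h_z=h_{\bar z}=0$ for $h=\sigma_g=e^{2u}\tau_{g-1}$ all check out (I verified the signs and the normalizations $\omega_0=u_z$, $\sigma_0=\tfrac{i}{2}e^{-2u}$, $\omega_g=\tau_g=0$ against the $\alpha_\lambda$ of Assumption~\ref{assumption_gamma_1}). Note that the paper itself gives no proof of this proposition --- it is imported verbatim from \cite{Hauswirth_Kilian_Schmidt_1}, Proposition 2.2 --- and your argument is the standard direct computation used there; the only cosmetic slip is calling $\sigma_g$ ``holomorphic'' when $\sigma_{g,z}=0$ (it is antiholomorphic), which does not affect the conclusion since you only use the equation $h_z=0$.
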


In \cite{Pinkall_Sterling} Pinkall-Sterling construct a series of solutions for the induction introduced in Proposition \ref{prop_pinkall_sterling}, (iii). From this \textbf{Pinkall-Sterling iteration} we obtain for the first terms of $\omega = \sum_{n\geq -1} \lambda^n \omega_n$
\begin{gather*}
\omega_{-1} = 0, \;\; \omega_0 = u_z = \tfrac{1}{2}(u_x - iu_y), \;\; \omega_1 = u_{zzz}-2(u_z)^3,\;\; \\
\omega_2 = u_{zzzzz} - 10u_{zzz}(u_z)^3 - 10(u_{zz})^2u_z + 6(u_z)^5,\; \ldots
\end{gather*}

\subsection{Potentials and polynomial Killing fields} 
We follow the exposition given in \cite{Hauswirth_Kilian_Schmidt_1}, Section 2. For $g \in \NN_0$ consider the $3g+1$-dimensional real vector space
\begin{gather*}
% \Lambda_{-1}^g\mathfrak{sl}(2,\CC) 
\Lambda_{-1}^g\mathfrak{sl}_2(\CC) = \bigg\{ \xi_{\lambda} = \sum_{n=-1}^g \lambda^n\widehat{\xi}_n \,\bigg|\, \widehat{\xi}_{-1} \in i\RR \epsilon_+, 
% \hspace{4cm} \\
% \hspace{4cm} 
\widehat{\xi}_n = -\overline{\widehat{\xi}_{g-1-n}}^t \in \mathfrak{sl}_2(\CC) \text{ for } n=-1,\ldots,g\bigg\}
\end{gather*}
and define an open subset of $\Lambda_{-1}^g\mathfrak{sl}_2(\CC)$ by
$$
\mathcal{P}_g := \{\xi_{\lambda} \in \Lambda_{-1}^g\mathfrak{sl}_2(\CC)\left|\right. \widehat{\xi}_{-1} \in i\RR^+ \epsilon_+, \trace(\widehat{\xi}_{-1}\widehat{\xi}_0) \neq 0\}.
$$
Every $\xi_{\lambda} \in \mathcal{P}_g$ satisfies the so-called \textbf{reality condition}
$$
\lambda^{g-1}\overline{\xi_{1/\bar{\lambda}}}^t = -\xi_{\lambda}.
$$

\begin{definition}
A \textbf{polynomial Killing field} is a map $\zeta_{\lambda}:\RR \to \mathcal{P}_g$ which solves
$$
\frac{d}{dx}\zeta_{\lambda} = [\zeta_{\lambda},U_{\lambda}(\cdot,0)] \;\;\; \text{ with } \;\;\; \zeta_{\lambda}(0) = \xi_{\lambda} \in \mathcal{P}_g.
$$
\end{definition}

For each initial value $\xi_{\lambda} \in \mathcal{P}_g$, there exists a unique polynomial Killing field given by
$$
\zeta_{\lambda}(x) := F_{\lambda}^{-1}(x)\xi_{\lambda} F_{\lambda}(x)
$$
with $\frac{d}{dx}F_{\lambda}(x) = F_{\lambda}(x)U_{\lambda}(x,0)$, since there holds
\begin{eqnarray*}
\tfrac{d}{dx}\zeta_{\lambda} & = & \tfrac{d}{dx}\left(F_{\lambda}^{-1}\xi_{\lambda} F_{\lambda}\right) = -F_{\lambda}^{-1}(\tfrac{d}{dx}F_{\lambda})F_{\lambda}^{-1}\xi_{\lambda} F_{\lambda} + F_{\lambda}^{-1}\xi_{\lambda} (\tfrac{d}{dx}F_{\lambda}) \\
& = & -U_{\lambda}(\cdot,0)F_{\lambda}^{-1}\xi_{\lambda} F_{\lambda} + F_{\lambda}^{-1}\xi_{\lambda} F_{\lambda}U_{\lambda}(\cdot,0) \\
& = & [\zeta_{\lambda},U_{\lambda}(\cdot,0)].
\end{eqnarray*}

In order to obtain a polynomial Killing field $\zeta_{\lambda}$ from a pair $(u,u_y)$ of finite type we set
$$
\zeta_{\lambda}(x) := \Phi_{\lambda}(x) - \lambda^{g-1}\overline{\Phi_{1/\bar{\lambda}}}^t(x) \;\;\; \text{ and } \;\;\; \zeta_{\lambda}(0) =: \xi_{\lambda} = \Phi_{\lambda}(0) - \lambda^{g-1}\overline{\Phi_{1/\bar{\lambda}}}^t(0).
$$

\begin{remark}
These polynomial Killing fields $\zeta_{\lambda}$ are \textit{periodic} maps $\zeta_{\lambda}:\RR/\mathbf{p}\ZZ \to \mathcal{P}_g$.
\end{remark}

Suppose we have a polynomial Killing field
$$
\zeta_{\lambda}(x) = \begin{pmatrix} 0 & \beta_{-1}(x) \\ 0 & 0 \end{pmatrix} \lambda^{-1} + \begin{pmatrix} \alpha_0(x) & \beta_{0}(x) \\ \gamma_0(x) & -\alpha_0(x) \end{pmatrix} \lambda^0 + \ldots + \begin{pmatrix} \alpha_g(x) & \beta_{g}(x) \\ \gamma_g(x) & -\alpha_g(x) \end{pmatrix} \lambda^g.
$$
Then one can assign a matrix-valued form $U(\zeta_{\lambda})$ to $\zeta_{\lambda}$ defined by
$$
U(\zeta_{\lambda})= 
\begin{pmatrix} 
\alpha_0(x) - \overline{\alpha}_0(x) & \lambda^{-1} \beta_{-1}(x) - \overline{\gamma}_0(x) \\
-\lambda\overline{\beta}_{-1}(x) + \gamma_0(x) & -\alpha_0(x) + \overline{\alpha}_0(x)
\end{pmatrix}dx.
$$

\begin{remark}
This shows that $(u,u_y) \simeq U_{\lambda}(\cdot,0)$ is uniquely defined by $\zeta_{\lambda}$.
\end{remark}

\section{The associated spectral data}

In this section we want to establish a 1:1-correspondence between pairs $(u,u_y)$ that originate from solutions of the $\sinh$-Gordon equation and the so-called spectral data $(Y(u,u_y),D(u,u_y))$ consisting of the spectral curve $Y(u,u_y)$ and a divisor $D(u,u_y)$ on $Y(u,u_y)$.

\subsection{Spectral curve defined by $\xi_{\lambda} \in \mathcal{P}_g$}

Let us introduce the definition of a spectral curve $Y$ that results from a periodic polynomial Killing field $\zeta_{\lambda}:\RR/\mathbf{p}\ZZ \to \mathcal{P}_g$.
\begin{definition}
Let $Y^*$ be defined by
$$
Y^* = \{(\lambda,\widetilde{\nu}) \in \CC^* \times \CC^* \left|\right. \det(\widetilde{\nu}\unity - \zeta_{\lambda}) = \widetilde{\nu}^2 + \det(\xi_{\lambda}) = 0\}
$$
and suppose that the polynomial $a(\lambda) = -\lambda \det(\xi_{\lambda})$ has $2g$ pairwise distinct roots. By declaring $\lambda=0,\infty$ to be two additional branch points and setting $\nu = \widetilde{\nu}\lambda$ one obtains that
$$
Y := \{(\lambda,\nu) \in \CC\PP^1 \times \CC\PP^1 \left|\right. \nu^2 = \lambda a(\lambda)\}
$$
defines a compact hyperelliptic curve $Y$ of genus $g$, the \textbf{spectral curve}. The genus $g$ of $Y$ is called the \textbf{spectral genus}.
\end{definition}

\begin{remark}
Note that the eigenvalue $\widetilde{\nu}$ of $\xi_{\lambda}$ is given by $\widetilde{\nu} = \frac{\nu}{\lambda}$.
\end{remark}

\subsection{Divisor}
The spectral curve $Y$ encodes the eigenvalues of $\xi_{\lambda} \in \mathcal{P}_g$. By considering the eigenvectors of $\xi_{\lambda}$ one arrives at the following lemma.

\begin{lemma}\label{lemma_eigenbundle}
On the spectral curve $Y$ there exist unique meromorphic maps $v(\lambda,\nu)$ and $w(\lambda,\nu)$ from $Y$ to $\CC^2$ such that
\begin{enumerate}
\item[(i)] For all $(\lambda,\nu) \in Y^*$ the value of $v(\lambda,\nu)$ is an eigenvector of $\xi_{\lambda}$ with eigenvalue $\nu$ and $w(\lambda,\nu)$ is an eigenvector of $\xi_{\lambda}^t$ with eigenvalue $\nu$, i.e. 
$$
\xi_{\lambda}v(\lambda,\nu) = \nu v(\lambda,\nu), \;\;\; \xi_{\lambda}^t w(\lambda,\nu) = \nu w(\lambda,\nu).
$$
\item[(ii)] The first component of $v(\lambda,\nu)$ and $w(\lambda,\nu)$ is equal to $1$, i.e. $v(\lambda,\nu) = (1, v_2(\lambda,\nu))^t$ and $w(\lambda,\nu) = (1, w_2(\lambda,\nu))^t$ on $Y$.
\end{enumerate}
\end{lemma}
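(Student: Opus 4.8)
\emph{Proof proposal.}

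The plan is to produce $v$ and $w$ from an explicit formula in terms of the matrix entries of $\xi_\lambda$ together with the coordinate functions $\lambda,\nu$ on $Y$, and then to read off meromorphy from the fact that $\lambda$ and $\nu$ are meromorphic on the compact curve $Y$.

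Write $\xi_\lambda = \left(\begin{smallmatrix} p(\lambda) & q(\lambda) \\ r(\lambda) & -p(\lambda)\end{smallmatrix}\right)$, where $p,q,r$ are Laurent polynomials in $\lambda$ whose coefficients are the entries of the matrices $\widehat\xi_n$. By the remark following the definition of the spectral curve, the eigenvalue of $\xi_\lambda$ attached to a point $(\lambda,\nu)\in Y$ is $\widetilde\nu = \nu/\lambda$, so that $\widetilde\nu^2 = -\det\xi_\lambda = p(\lambda)^2 + q(\lambda)r(\lambda)$ on $Y^*$. First I would check by a one-line computation that
$$
v(\lambda,\nu) := \bigl(1,\ \tfrac{\widetilde\nu - p(\lambda)}{q(\lambda)}\bigr)^t
= \bigl(1,\ \tfrac{r(\lambda)}{\widetilde\nu + p(\lambda)}\bigr)^t
$$
satisfies $\xi_\lambda\, v(\lambda,\nu) = \widetilde\nu\, v(\lambda,\nu)$; the two displayed expressions agree wherever both are defined because $(\widetilde\nu - p)(\widetilde\nu + p) = \widetilde\nu^2 - p^2 = qr$, and together they are regular on a dense open subset of $Y^*$. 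Since $\widehat\xi_{-1}\in i\RR^+\epsilon_+$, the $\lambda^{-1}$-coefficient of $q$ is nonzero, so $q\not\equiv 0$ and $v_2 := (\widetilde\nu - p)/q$ is a genuine meromorphic function on $Y$, being a rational expression in the meromorphic functions $\lambda$ and $\nu$. Running the same construction with $\xi_\lambda^t$ in place of $\xi_\lambda$ (which interchanges $q$ and $r$) yields $w$; here one uses the reality condition $\lambda^{g-1}\overline{\xi_{1/\bar\lambda}}^t = -\xi_\lambda$, which forces $\widehat\xi_g\in i\RR^+\epsilon_-$ and hence $r\not\equiv 0$. This settles existence, with property (i) holding as an identity of meromorphic maps (equivalently, pointwise on $Y^*$ wherever $v$, resp. $w$, is holomorphic) and (ii) built into the normalization.

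For uniqueness I would note that at every point of $Y^*$ we have $\nu\neq 0$, hence $\widetilde\nu\neq 0$, so the two eigenvalues $\pm\widetilde\nu$ of $\xi_\lambda$ are distinct and each eigenspace is a line; away from the nowhere-dense set where this line equals $\CC\,(0,1)^t$ the normalization (ii) determines the eigenvector of $\xi_\lambda$, resp. $\xi_\lambda^t$, uniquely, and two meromorphic maps $Y\to\CC^2$ with constant first component $1$ that agree on a dense set coincide. The one place where a little care is needed is the extension across $Y\setminus Y^*$, i.e. over $\lambda\in\{0,\infty\}$ — where $\xi_\lambda$ degenerates to a multiple of $\epsilon_+$, resp. $\epsilon_-$ — and over the $2g$ roots of $a$, where $\xi_\lambda$ is a nonzero nilpotent; but this is automatic from the formula above, since $v_2$ and $w_2$ are quotients of functions meromorphic on all of the compact curve $Y$. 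Identifying the precise orders of the zeros and poles that arise there — a zero of $v_2$ over $\lambda=0$ and a pole over $\lambda=\infty$, with the roles reversed for $w_2$ — will be needed only when the divisor $D$ is read off in the next subsection, and not for the present statement.
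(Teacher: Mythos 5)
Your proposal is correct, and it is exactly the standard argument the paper implicitly appeals to (the lemma is stated without proof, introduced only by ``By considering the eigenvectors of $\xi_{\lambda}$\ldots''): writing $\xi_\lambda=\left(\begin{smallmatrix} p & q \\ r & -p\end{smallmatrix}\right)$ and taking $v=(1,(\widetilde\nu-p)/q)^t=(1,r/(\widetilde\nu+p))^t$, with meromorphy coming from $\lambda,\nu$ being meromorphic on the compact curve, and with $q\not\equiv 0$ and $r\not\equiv 0$ forced by $\widehat\xi_{-1}\in i\RR^+\epsilon_+$ and the reality condition. Your uniqueness argument via distinctness of the eigenvalues $\pm\widetilde\nu$ on $Y^*$ and the identity theorem is also the expected one, and your remark that (i) is to be read as an identity of meromorphic maps correctly handles the points where $v_2$ or $w_2$ has a pole.
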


The holomorphic map $v:Y\to\CC\PP^1$ from Lemma \ref{lemma_eigenbundle} motivates the following definition.

\begin{definition}\label{definition_eigen_bundle}
Set the \textbf{divisor} $D(u,u_y)$ as
$$
D(u,u_y) = -\big(v(\lambda,\nu)\big)
$$
and denote by $E(u,u_y)$ the holomorphic line bundle whose sheaf of holomorphic sections is given by $\mathcal{O}_{D(u,u_y)}$. Then $E(u,u_y)$ is called the \textbf{eigenline bundle}.
\end{definition}

\subsection{Characterization of spectral curves}

Given a hyperelliptic Riemann surface $Y$ with branch points over $\lambda=0$ ($y_0$) and $\lambda = \infty$ ($y_{\infty}$) we can deduce conditions such that $Y$ is the spectral curve of a periodic finite type solution of the $\sinh$-Gordon equation. \\

Let us recall the well-known characterization of such spectral curves (cf. \cite{Kilian_Schmidt_infinitesimal}, Section 1.2, in the case of immersed CMC tori in $\SS^3$). Note that  $M_{\lambda}$ and $\xi_{\lambda}$ commute, i.e. $[M_{\lambda},\xi_{\lambda}] = 0$. Thus it is possible to diagonalize them simultaneously and $\mu$ and $\nu$ can be considered as two different functions on the same Riemann surface $Y$.

\begin{theorem}[\cite{Kilian_Schmidt_infinitesimal}]\label{theorem_spectral_curve}
Let $Y$ be a hyperelliptic Riemann surface with branch points over $\lambda=0$ ($y_0$) and $\lambda = \infty$ ($y_{\infty}$). Then $Y$ is the spectral curve of a periodic real finite type solution of the $\sinh$-Gordon equation if and only if the following three conditions hold:
\begin{enumerate}
\item[(i)] Besides the hyperelliptic involution $\sigma$ the Riemann surface $Y$ has two further anti-holomorphic involutions $\eta$ and $\rho = \eta \circ \sigma$. Moreover, $\eta$ has no fixed points and $\eta(y_0) = y_{\infty}$.
\item[(ii)] There exists a non-zero holomorphic function $\mu$ on $Y\backslash\{y_0,y_{\infty}\}$ that obeys
$$
\sigma^* \mu = \mu^{-1},\;\; \eta^*\bar{\mu} = \mu,\;\; \rho^*\bar{\mu} = \mu^{-1}.
$$
\item[(iii)] The form $d\ln\mu$ is a meromorphic differential of the second kind with double poles at $y_0$ and $y_{\infty}$ only.
\end{enumerate}
\end{theorem}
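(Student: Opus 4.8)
The plan is to establish both implications directly, using the correspondence between a pair $(u,u_y)$, its polynomial Killing field $\zeta_\lambda$ and its monodromy $M_\lambda$ built up in the previous sections, together with the asymptotics of $\ln\mu$ from Corollary~\ref{expansion_mu}.

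\textbf{Necessity.} Suppose $Y$ is the spectral curve of a periodic real finite type solution $(u,u_y)$. The hyperelliptic involution $\sigma\colon(\lambda,\nu)\mapsto(\lambda,-\nu)$ is built into the defining equation $\nu^2=\lambda a(\lambda)$. The reality condition $\lambda^{g-1}\overline{\xi_{1/\bar{\lambda}}}^t=-\xi_\lambda$ satisfied by every $\xi_\lambda\in\mathcal{P}_g$ identifies, after complex conjugation, the spectral data of $\xi_\lambda$ over $\lambda$ with that of $\xi_{1/\bar\lambda}$ over $1/\bar\lambda$, and therefore descends to an anti-holomorphic involution $\eta$ of $Y$ covering $\lambda\mapsto 1/\bar\lambda$; in particular $\eta(y_0)=y_\infty$, and one sets $\rho=\eta\circ\sigma$. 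Because $u$ is real, $F_\lambda$ and hence $M_\lambda$ take values in $SU(2)$ along $\SS^1$, so the two points of $Y$ over a generic $\lambda_0\in\SS^1$ carry the distinct unitary eigenvalues $e^{\pm i\theta}$ of $M_{\lambda_0}$ and are interchanged by $\eta$; combined with the absence (forced by smoothness of $Y$ and unitarity) of branch points over $\SS^1$, this gives (i). For (ii), $\mu$ is the eigenvalue of $M_\lambda$ and, since $[M_\lambda,\xi_\lambda]=0$, a well-defined function on $Y\setminus\{y_0,y_\infty\}$, holomorphic and nowhere vanishing there because $M_\lambda$ is entire on $\CC^*$ and $\mu\cdot\sigma^*\mu=\det M_\lambda=1$; the latter identity is the relation $\sigma^*\mu=\mu^{-1}$, while $\overline{M_\lambda}^t=M_\lambda^{-1}$ on $\SS^1$ followed by analytic continuation yields $\eta^*\bar\mu=\mu$ and $\rho^*\bar\mu=\mu^{-1}$. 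Finally (iii): by Corollary~\ref{expansion_mu}, in the local coordinate $\sqrt\lambda$ at $y_0$ one has $\ln\mu=\tfrac{i\mathbf{p}}{2}(\sqrt\lambda)^{-1}+O(\sqrt\lambda)$, so $d\ln\mu$ has a double pole with vanishing residue at $y_0$; applying $\eta^*$ gives the same at $y_\infty$, and since $\mu$ is holomorphic and nowhere zero elsewhere there are no further poles.

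\textbf{Sufficiency.} Conversely assume $Y$ carries the data in (i)--(iii). By (iii) the second-kind differential $d\ln\mu$ has double poles only at $y_0,y_\infty$, and by (ii) $\mu$ itself is single-valued; the principal parts of $d\ln\mu$ at $y_0$ and $y_\infty$ prescribe the essential singularities of a Baker--Akhiezer function. Choose a divisor $D$ on $Y$ of the appropriate degree satisfying the reality constraint imposed by $\rho$ and the linear-equivalence relation between $D$ and $\sigma^*D$ forced by the $\mathfrak{sl}_2(\CC)$-valued setting; one first checks that this real torus of admissible divisors is non-empty. For such $D$, a Riemann--Roch count produces, uniquely up to normalization, a function $\psi(x,\,\cdot\,)$ on $Y$ that is holomorphic on $Y\setminus\{y_0,y_\infty\}$ with poles bounded by $D$, has the prescribed essential singularities at $y_0,y_\infty$, and transforms as $\psi(x+\mathbf{p},P)=\mu(P)\,\psi(x,P)$. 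Assembling the values of $\psi$ on the two sheets into a matrix $\Psi(x,\lambda)$ and putting $U_\lambda:=\Psi^{-1}\tfrac{d}{dx}\Psi$, one obtains a meromorphic $\mathfrak{sl}_2(\CC)$-valued function of $\lambda$; using the pole bound from $D$ and the involutions $\sigma,\eta,\rho$ together with $\sigma^*\mu=\mu^{-1}$ and $\eta^*\bar\mu=\mu$, one shows that $U_\lambda$ is in fact regular on $\CC^*$ and of the normalized form \eqref{eq_U_lambda} for a real function $u$, which by the transformation law of $\psi$ is $\mathbf{p}$-periodic. Introducing the second essential parameter in the same manner produces the $y$-flow, whose compatibility with the $x$-flow is precisely the $\sinh$-Gordon equation for $u$, and $\zeta_\lambda(x):=\Psi^{-1}\xi_\lambda\Psi$, with $\xi_\lambda$ the constant matrix attached to $(Y,D)$, is a polynomial Killing field, so $(u,u_y)$ is of finite type and $Y$ is its spectral curve.

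\textbf{Main obstacle.} The delicate part is the sufficiency direction. First, one must show that the real divisor torus is non-empty and that the $\rho$-reality condition on $D$ is compatible with the construction, so that the reconstructed potential is genuinely real rather than merely complex; this is exactly where the fixed-point-free involution $\eta$ and the relation $\eta^*\bar\mu=\mu$ enter in an essential way. Second, and more technically, one must verify that the reconstructed $U_\lambda$ is not just gauge-equivalent to a $\sinh$-Gordon potential but literally has the form \eqref{eq_U_lambda}, with the $\lambda^{-1}$/$\lambda^0$ structure and the $e^{\pm u}$ off-diagonal entries; this rests on a careful local analysis of $\psi$ and $d\ln\mu$ at $y_0$ and $y_\infty$, and it is there that the precise normalization $\tfrac{i\mathbf{p}}{2}(\sqrt\lambda)^{-1}$ of the principal part from Corollary~\ref{expansion_mu} is needed to ensure that the period of the resulting solution equals the prescribed $\mathbf{p}$.
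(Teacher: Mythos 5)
The paper does not actually prove this statement: it is quoted verbatim from \cite{Kilian_Schmidt_infinitesimal} (cf.\ Section 1.2 there) and used as a black box, so there is no internal proof to compare yours against. Judged on its own terms, your \emph{necessity} direction is sound and matches what the surrounding machinery of the paper supports: $\sigma$ comes from $\nu^2=\lambda a(\lambda)$; the reality condition $\lambda^{g-1}\overline{\xi_{1/\bar\lambda}}^t=-\xi_\lambda$ does induce $\eta$ covering $\lambda\mapsto 1/\bar\lambda$ (indeed it forces $\lambda^{(1-g)/2}\xi_\lambda\in\mathfrak{su}(2)$ on $\SS^1$, hence $\lambda^{-g}a(\lambda)\le 0$ with equality impossible, which is the clean way to see that $a$ has no roots on $\SS^1$ and that $\eta$ is fixed-point free — your appeal to ``unitarity and smoothness'' is the right idea but this is the step that deserves the computation); $\mu\cdot\sigma^*\mu=\det M_\lambda=1$ gives (ii); and Corollary \ref{expansion_mu} read in the coordinate $\sqrt\lambda$ gives the residue-free double pole at $y_0$, transported to $y_\infty$ by $\eta$, which is (iii).

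The \emph{sufficiency} direction, however, is only a roadmap, and the two obstacles you name at the end are not side issues — they \emph{are} the theorem. Concretely: (a) you must prove that the set of admissible divisors (degree $g+1$, $\eta(D)-D=(f)$ with $f\eta^*\bar f=-1$) is non-empty; this is exactly where the fixed-point-freeness of $\eta$ enters, and asserting ``one first checks'' does not discharge it. (b) Even granting $D$, the Baker--Akhiezer function yields a priori only a meromorphic flow of divisors $x\mapsto D_x$ on the Jacobian; you must show this linear flow never meets the locus where the reconstruction degenerates, i.e.\ that the resulting $u$ is a globally smooth \emph{real} function rather than a singular or complex solution — your phrase ``regular on $\CC^*$'' addresses regularity in $\lambda$, not regularity in $x$ (and in $y$ for the second flow), which is the actual danger for sinh--Gordon. (c) The verification that $U_\lambda$ literally takes the form \eqref{eq_U_lambda} with off-diagonal entries $e^{\pm u}$ requires the local normalization of $\psi$ at $y_0,y_\infty$ that you gesture at but do not perform. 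None of these steps is routine, and your proposal would need to either carry them out or cite them (as the paper itself does, to Hitchin and Kilian--Schmidt) to count as a proof rather than a plausible outline.
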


%\begin{proof} We first consider the ``if''-part ``$\Rightarrow$'' and get the conditions (i) and (ii) from Remark \ref{involutions_with_nu} and Remark \ref{mu_holomorphic} together with Lemma \ref{involutions}. From Corollary \ref{expansion_mu} we also have
%\begin{eqnarray*}
%\ln\mu & = & \tfrac{1}{\sqrt{\lambda}}\tfrac{i\mathbf{p}}{2} + \sqrt{\lambda}\int_0^{\mathbf{p}}\left(-i(\del u)^2 + \tfrac{i}{2}\cosh(2u)\right)\,dt + O(\lambda) \; \text{ at } \lambda = 0,\\
%\ln\mu & = & \sqrt{\lambda}\tfrac{i\mathbf{p}}{2} + \tfrac{1}{\sqrt{\lambda}}\int_0^{\mathbf{p}}\left(-i(\delbar u)^2 + \tfrac{i}{2}\cosh(2u)\right)\,dt + O(\lambda^{-1}) \;\text{ at } \lambda = \infty
%\end{eqnarray*}
%and therefore get in the $\sqrt{\lambda}$-chart around $\lambda = 0$ and the $(1/\sqrt{\lambda})$-chart around $\lambda = \infty$
%\begin{eqnarray*}
%d\ln\mu & = & d\sqrt{\lambda}\left(-\tfrac{1}{\lambda}\tfrac{i\mathbf{p}}{2} + \int_0^{\mathbf{p}}\left(-i(\del u)^2 + \tfrac{i}{2}\cosh(2u)\right)\,dt + O(\sqrt{\lambda})\right) \; \text{ at } \lambda = 0,\\
%d\ln\mu & = & \frac{d}{\sqrt{\lambda}}\left(-\lambda\tfrac{i\mathbf{p}}{2} + \int_0^{\mathbf{p}}\left(-i(\delbar u)^2 + \tfrac{i}{2}\cosh(2u)\right)\,dt + O(1/\sqrt{\lambda})\right) \;\text{ at } \lambda = \infty.
%\end{eqnarray*}
%This implies condition (iii). The ``only if''-part ``$\Leftarrow$'' follows from Proposition \ref{one_to_one_correspondence} in Chapter 4.
%\end{proof}

%\begin{remark}
%Since $\sigma^*d\ln\mu = d\ln(1/\mu) = -d\ln\mu$, we see that $d\ln\mu$ changes its sign under the hyperelliptic involution $\sigma$.
%\end{remark}

Following the terminology of \cite{Hauswirth_Kilian_Schmidt_1, Kilian_Schmidt_cylinders, Kilian_Schmidt_infinitesimal}, we will describe spectral curves of periodic real finite type solutions of the $\sinh$-Gordon equation via hyperelliptic curves of the form
$$
\nu^2 = \lambda\, a(\lambda) = -\lambda^2\det(\xi_{\lambda}) = (\lambda \widetilde{\nu})^2.
$$
Here $\widetilde{\nu}$ is the eigenvalue of $\xi_{\lambda}$ and $\lambda: Y \to \CC\PP^1$ is chosen in a way such that $y_0$ and $y_{\infty}$ correspond to $\lambda = 0$ and $\lambda = \infty$ with 
$$
\sigma^*\lambda = \lambda,\;\; \eta^*\bar{\lambda} = \lambda^{-1},\;\; \rho^*\bar{\lambda} = \lambda^{-1}.
$$
% and
% $$
% a(\lambda) = (-1)^g \prod_{i=1}^g \frac{\bar{\alpha}_i}{|\alpha_i|}(\lambda - \alpha_i)(\lambda - \bar{\alpha}_i^{-1})
% $$
% with pairwise different branch points $\alpha_1,\ldots,\alpha_g \in \{\lambda \in \CC \left|\right. 0 < |\lambda| < 1\}$. Thus we get $\eta^*\overline{a} = a$ and $\rho^*\overline{a} = a$ and $\lambda^{-g}a(\lambda) > 0$ for all $\lambda \in \SS^1$. 
Note that the function $\lambda: Y \to \CC\PP^1$ is fixed only up to a M\"obius transformations of the form $\lambda \mapsto e^{2i\varphi}\lambda$. Moreover, $d\ln\mu$ is of the form
$$
d\ln\mu = \frac{b(\lambda)}{\nu}\frac{d\lambda}{\lambda},
$$
where $b$ is a polynomial of degree $g+1$ with $\lambda^{g+1}\overline{b(\bar{\lambda}^{-1})} = -b(\lambda)$.

\begin{definition}\label{definition_spectral_data}
The \textbf{spectral curve data} of a periodic real finite type solution of the $\sinh$-Gordon equation is a pair $(a,b) \in \CC^{2g}[\lambda] \times \CC^{g+1}[\lambda]$ such that
\begin{enumerate}
\item[(i)] $\lambda^{2g}\overline{a(\bar{\lambda}^{-1})} = a(\lambda)$ and $\lambda^{-g}a(\lambda) \leq 0$ for all $\lambda \in \SS^1$ and $|a(0)| = 1$.
\item[(ii)] On the hyperelliptic curve $\nu^2 = \lambda a(\lambda)$ there is a single-valued holomorphic function $\mu$ with essential singularities at $\lambda = 0$ and $\lambda = \infty$ with logarithmic differential
$$
d\ln\mu = \frac{b(\lambda)}{\nu}\frac{d\lambda}{\lambda}
$$
with $b(0) = i\frac{\sqrt{a(0)}}{2} \mathbf{p}$ that transforms under the three involutions
\begin{gather*}
 \sigma: (\lambda,\nu) \mapsto (\lambda,-\nu),\;\; \rho: (\lambda,\nu) \mapsto (\bar{\lambda}^{-1},-\bar{\lambda}^{-1-g}\bar{\nu}), \;\; 
 \eta: (\lambda,\nu) \mapsto (\bar{\lambda}^{-1},\bar{\lambda}^{-1-g}\bar{\nu})
\end{gather*}
according to $\sigma^*\mu = \mu^{-1},\, \rho^*\mu = \bar{\mu}^{-1}$ and $\eta^*\mu = \bar{\mu}$.
\end{enumerate}
\end{definition}

\begin{remark}\label{equivalent_spectral_data}
The conditions (i) and (ii) from Definition \ref{definition_spectral_data} are equivalent to the following conditions (cf. Definition 5.10 in \cite{Hauswirth_Kilian_Schmidt_1}):
\begin{enumerate}
\item[(i)] $\lambda^{2g}\overline{a(\bar{\lambda}^{-1})} = a(\lambda)$ and $\lambda^{-g}a(\lambda) \leq 0$ for all $\lambda \in \SS^1$ and $|a(0)| = 1$.
\item[(ii)] $\lambda^{g+1}\overline{b(\bar{\lambda}^{-1})} = -b(\lambda)$ and $b(0) = i\frac{\sqrt{a(0)}}{2} \mathbf{p}$.
\item[(iii)] $\int_{\alpha_i}^{1/\bar{\alpha}_i} \frac{b(\lambda)}{\nu}\frac{d\lambda}{\lambda} = 0$ for all roots $\alpha_i$ of $a$.
\item[(iv)] The unique function $h: \widetilde{Y} \to \CC$, where $\widetilde{Y} = Y \setminus\bigcup\gamma_i$ and $\gamma_i$ are closed cycles over the straight lines connecting $\alpha_i$ and $1/\bar{\alpha}_i$, obeying $\sigma^*h = -h$ and $dh=\frac{b(\lambda)}{\nu}\frac{d\lambda}{\lambda}$, satisfies $h(\alpha_i) \in \pi i \ZZ$ for all roots $\alpha_i$ of $a$.
\end{enumerate}
\end{remark}

\subsection{The moduli space}
Since a M\"obius transformation of the form $\lambda \mapsto e^{2i\varphi}\lambda$ changes the spectral curve data $(a,b)$ but does not change the corresponding periodic solution of the $\sinh$-Gordon equation we introduce the following definition.

\begin{definition}
For all $g\in \NN_0$ let $\mathcal{M}_g(\mathbf{p})$ be the space of equivalence classes of spectral curve data $(a,b)$ from Definition \ref{definition_spectral_data} with respect to the action of $\lambda \mapsto e^{2i\varphi}\lambda$ on $(a,b)$. $\mathcal{M}_g(\mathbf{p})$ is called the \textbf{moduli space} of spectral curve data for Cauchy data $(u,u_y)$ of periodic real finite type solutions of the $\sinh$-Gordon equation.
\end{definition}

Each pair of polynomials $(a,b) \in \mathcal{M}_g(\mathbf{p})$ represents a spectral curve $Y_{(a,b)}$ for Cauchy data $(u,u_y)$ of a periodic real finite type solution of the $\sinh$-Gordon equation. 
% The following definition will be important as well.
%  We will work with these ``coordinates'' in order to get a better understanding of the space $\mathcal{M}_g(\mathbf{p})$. For now we also state

\begin{definition}\label{definition_M_g_1}
Let 
\begin{eqnarray*}
\mathcal{M}_g^1(\mathbf{p}) & := & \{(a,b) \in \mathcal{M}_g(\mathbf{p}) \left|\right. a \text{ has } 2g \text{ pairwise distinct roots and  } \\
& & \hspace{2.9cm} (a,b) \text{ have no common roots}\}
\end{eqnarray*}
be the moduli space of non-degenerated smooth spectral curve data for Cauchy data $(u,u_y)$ of periodic real finite type solutions of the $\sinh$-Gordon equation.
\end{definition}

The term ``non-degenerated'' in Definition \ref{definition_M_g_1} reflects the following fact (cf. \cite{Hauswirth_Kilian_Schmidt_2}, Section 9): If one considers deformations of spectral curve data $(a,b)$, the corresponding integral curves have possible singularities, if $a$ and $b$ have common roots. By excluding the case of common roots of $(a,b)$, one can avoid that situation and identify the space of such deformations with certain polynomials $c \in \CC^{g+1}[\lambda]$ (see Section \ref{subsection_spectral_curve_deformation}).

\begin{remark}
By studying Cauchy data $(u,u_y)$ whose spectral curve $Y(u,u_y)$ corresponds to $(a,b) \in \mathcal{M}_g^1(\mathbf{p})$, we have the following benefits:
\begin{enumerate}
\item Since $(a,b) \in \mathcal{M}_g^1(\mathbf{p})$ correspond to Cauchy data $(u,u_y)$ of finite type, we can avoid difficult functional analytic methods for the asymptotic analysis of the spectral curves $Y$ at $\lambda = 0$ and $\lambda = \infty$. 
\item Since $(a,b) \in \mathcal{M}_g^1(\mathbf{p})$ have no common roots, we obtain non-singular smooth spectral curves $Y$ and can apply the standard tools from complex analysis for their investigation.
\end{enumerate}
Note, that these assumptions can be dropped in order to extend our results to the more general setting. This was done in \cite{Schmidt_infinite} for the case of the non-linear Schr\"odinger operator, for example.
\end{remark}

\subsection{Spectral data}
With all this terminology at hand let us introduce the spectral data associated to a periodic real finite type solution of the $\sinh$-Gordon equation.
\begin{definition}
The \textbf{spectral data} of a periodic real finite type solution of the $\sinh$-Gordon equation is a pair $(Y(u,u_y),D(u,u_y))$ such that $Y(u,u_y)$ is a hyperelliptic Riemann surface of genus $g$ that obeys the conditions from Theorem \ref{theorem_spectral_curve} and $D(u,u_y)$ is a divisor of degree $g+1$ on $Y(u,u_y)$ that obeys $\eta(D) - D = (f)$ for a meromorphic $f$ with $f\eta^*\bar{f} = -1$.
\end{definition}

From the investigation of the so-called "Inverse Problem" it is well-known that the correspondence between $(u,u_y)$ and the spectral data is bijective, since $(Y,D)$ uniquely determine $\xi_{\lambda}$ and $\zeta_{\lambda}$ respectively (cf. \cite{Hitchin_Harmonic}).

\section{Isospectral and non-isospectral deformations}

We will now consider deformations that correspond to variations of the divisor $D$ or the spectral curve $Y$ and call them isospectral and non-isospectral deformations, respectively.

\subsection{The projector $P$}

We will use the meromorphic maps $v:Y \to \CC^2$ and $w^t:Y \to \CC^2$ from Lemma \ref{lemma_eigenbundle} to define a matrix-valued meromorphic function on $Y$ by setting $P := \frac{v w^t}{w^t v}$. Given a meromorphic map $f$ on $Y$ we also define
$$
P(f) := \frac{v f w^t}{w^t v}.
$$
It turns out that $P$ is a projector and that it is possible to extend the definition of the projector $P$ to a projector $P_x$ that is defined by
$$
P_x(f) := \frac{v(x) f w(x)^t}{w(x)^t v(x)} = F_{\lambda}^{-1}(x)P(f) F_{\lambda}(x).
$$
Moreover, there holds $\zeta_{\lambda}(x) = P_x(\tfrac{\nu}{\lambda}) + \sigma^*P_x(\tfrac{\nu}{\lambda})$.

\subsection{General deformations of $M_{\lambda}$ and $U_{\lambda}$}

In the next lemma we consider the situation of a general variation with isospectral and non-isospectral parts.

\begin{lemma}\label{lemma_var_general}
Let $v_1, w_1^t$ be the eigenvectors for $\mu$ and $v_2, w_2^t$ the corresponding eigenvectors for $\tfrac{1}{\mu}$ of $M(\lambda)$. Then
\beq
\label{var_general}
\delta M(\lambda)v_1 + M(\lambda)\delta v_1 = (\delta\mu) v_1 + \mu \delta v_1 \;\text{ and }\; \delta M(\lambda)v_2 + M(\lambda)\delta v_2 = \delta(\tfrac{1}{\mu})v_2 +  \tfrac{1}{\mu} \delta v_2
\tag{$*$}
\ee
if and only if
$$
\label{var_general_M}
\delta M(\lambda) = \left[\sum_{i=1}^2 \frac{(\delta v_i) w_i^t}{w_i^t v_i},M(\lambda)\right] + (P(\delta\mu) +\sigma^*P(\delta\mu)).
%\tag{$**$}
$$
\end{lemma}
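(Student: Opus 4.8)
The plan is to reduce the asserted matrix identity to a pair of vector identities obtained by evaluating both sides on the eigenbasis $\{v_1,v_2\}$ of $M(\lambda)$.

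First I would fix notation and record the elementary spectral facts to be used. Since $M(\lambda)\in SL(2,\CC)$, its eigenvalues are $\mu_1:=\mu$ and $\mu_2:=\mu^{-1}$, with $M(\lambda)v_i=\mu_i v_i$ and $w_i^tM(\lambda)=\mu_i w_i^t$. On the open dense set where $\mu\neq\pm1$ (in particular away from the branch points of $Y$) one has $\mu_1\neq\mu_2$; hence $\{v_1,v_2\}$ is a basis of $\CC^2$, the biorthogonality relations $w_i^t v_j=0$ for $i\neq j$ follow from $(\mu_i-\mu_j)\,w_i^t v_j=0$, and $w_i^t v_i\neq 0$. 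Next I would unwind the term $P(\delta\mu)+\sigma^*P(\delta\mu)$: the meromorphic maps $v,w$ of Lemma \ref{lemma_eigenbundle} restrict on the two sheets of $Y$ over $\lambda$ to $v_1,v_2$ and to $w_1,w_2$ respectively, and $\mu$ restricts to $\mu_1,\mu_2$, so
$$
P(\delta\mu)+\sigma^*P(\delta\mu)=\sum_{i=1}^2 (\delta\mu_i)\,\frac{v_i w_i^t}{w_i^t v_i}=:B .
$$
Writing also $A:=\sum_{i=1}^2\frac{(\delta v_i)\,w_i^t}{w_i^t v_i}$, the right-hand side of the asserted formula is $[A,M(\lambda)]+B$; both $A$ and $B$ are $\sigma$-invariant, hence functions of $\lambda$ alone, as they must be.

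The core is the one-line computation that, by biorthogonality, $A v_i=\delta v_i$ and $B v_i=(\delta\mu_i)v_i$ for $i=1,2$, so that
$$
\bigl([A,M(\lambda)]+B\bigr)v_i=\mu_i\,A v_i-M(\lambda)\,A v_i+B v_i=(\delta\mu_i)v_i+\mu_i\,\delta v_i-M(\lambda)\,\delta v_i .
$$
On the other hand the $i$-th equation in \eqref{var_general} rearranges to $\delta M(\lambda)\,v_i=(\delta\mu_i)v_i+\mu_i\,\delta v_i-M(\lambda)\,\delta v_i$. Hence \eqref{var_general} holds if and only if $\delta M(\lambda)$ and $[A,M(\lambda)]+B$ take the same value on $v_1$ and on $v_2$; since these span $\CC^2$, this is equivalent to $\delta M(\lambda)=[A,M(\lambda)]+B$. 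Having established the identity on the open dense set $\mu\neq\pm1$, I would extend it to the remaining $\lambda$ by continuity of the meromorphic data.

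The calculation is routine; the points needing care are (i) correctly matching the sheetwise restrictions of $v$, $w$, $\mu$ with the eigendata $(v_i,w_i,\mu_i)$, so that $P(\delta\mu)+\sigma^*P(\delta\mu)$ really is $B$, and (ii) the genericity bookkeeping — that $\mu_1\neq\mu_2$ and $w_i^t v_i\neq 0$ — which makes $A$, $B$ well defined and legitimizes evaluating on a basis. I expect (i) to be the only genuine subtlety; once the two summands of the right-hand side are correctly identified, each direction of the equivalence follows from biorthogonality in a single line.
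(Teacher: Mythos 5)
Your proposal is correct and follows essentially the same route as the paper: evaluate $[A,M(\lambda)]+B$ on the eigenbasis $\{v_1,v_2\}$, use biorthogonality to get $Av_i=\delta v_i$ and $Bv_i=(\delta\mu_i)v_i$, and observe that agreement on a basis is equivalent to the matrix identity. Your explicit bookkeeping of the sheetwise identification of $P(\delta\mu)+\sigma^*P(\delta\mu)$ with $\sum_i(\delta\mu_i)\tfrac{v_iw_i^t}{w_i^tv_i}$ and of the genericity conditions $\mu\neq\pm1$, $w_i^tv_i\neq0$ is a welcome addition that the paper leaves implicit.
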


\begin{proof}
A direct calculation shows
\begin{small}
\begin{eqnarray*}
\left(\left[\sum_{i=1}^2 \frac{(\delta v_i) w_i^t}{w_i^t v_i},M(\lambda)\right]+ (P(\delta\mu) +\sigma^*P(\delta\mu))\right)v_1 
& = & \left(\sum_{i=1}^2 \frac{(\delta v_i) w_i^t}{w_i^t v_i}\right)\mu v_1 - M(\lambda)\delta v_1 +(\delta\mu) v_1 \\
& = & \mu \delta v_1 - M(\lambda)\delta v_1 +(\delta\mu) v_1 \stackrel{\eqref{var_general}}{=} \delta M(\lambda)v_1.
\end{eqnarray*}
\end{small}
An analogous calculation for $v_2$ gives
$$
\delta M(\lambda)v_2 = \left(\left[\sum_{i=1}^2 \frac{(\delta v_i) w_i^t}{w_i^t v_i},M(\lambda)\right]+ (P(\delta\mu) +\sigma^*P(\delta\mu))\right)v_2
$$
and the claim is proved.
\end{proof}

%Since the arguments from the previous proof carry over to the equation $M_{\lambda}(x)v(x) = \mu v(x)$ we get the following
%
%\begin{corollary}
%For the $x$-dependent monodromy $M_{\lambda}(x)$ a general variation is given by
%$$
%\delta M_{\lambda}(x) = \left[\sum_{i=1}^2 \frac{(\delta v_i(x)) w_i^t(x)}{w_i^t(x) v_i(x)},M_{\lambda}(x)\right] + (P_x(\delta\mu) +\sigma^*P_x(\delta\mu)).
%$$
%\end{corollary}

The above considerations also apply for the equation $L_{\lambda}(x) v(x) = (\frac{d}{dx}+U_{\lambda})v(x)= \frac{\ln\mu}{\mathbf{p}} \cdot v(x)$ around $\lambda = 0$ and yield the following lemma.

\begin{lemma}\label{general_variation_U}
Let $v_1(x), w_1^t(x)$ be the eigenvectors for $\mu$ and $v_2(x), w_2^t(x)$ the corresponding eigenvectors for $\tfrac{1}{\mu}$ of $M_{\lambda}(x)$ and $M_{\lambda}^t(x)$ respectively. Then
\begin{eqnarray*}
\label{var_L_general_1}
\delta U_{\lambda} v_1(x) + L_{\lambda}(x)\delta v_1(x) & = & (\tfrac{\delta\ln\mu}{\mathbf{p}}) v_1(x) + \tfrac{\ln\mu}{\mathbf{p}}\delta v_1(x), \\
%\tag{$*$}
%\label{var_L_general_2}
\delta U_{\lambda} v_2(x) + L_{\lambda}(x)\delta v_2(x) & = & -(\tfrac{\delta\ln\mu}{\mathbf{p}}) v_2(x) - \tfrac{\ln\mu}{\mathbf{p}}\delta v_2(x)
\end{eqnarray*}
around $\lambda = 0$ if and only if
\beq
\label{var_general_U}
\delta U_{\lambda} = \left[L_{\lambda}(x),-\sum_{i=1}^2 \frac{(\delta v_i(x)) w_i^t(x)}{w_i^t(x) v_i(x)}\right] + (P_x(\tfrac{\delta\ln\mu}{\mathbf{p}}) +\sigma^*P_x(\tfrac{\delta\ln\mu}{\mathbf{p}})).
%\tag{$**$}
\ee
\end{lemma}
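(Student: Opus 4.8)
The plan is to run the argument of Lemma~\ref{lemma_var_general} essentially verbatim, with the algebraic monodromy eigenvalue equation $M(\lambda)v_i=\mu^{\pm1}v_i$ replaced by the differential eigenvalue equation $L_\lambda(x)v_i(x)=\pm\tfrac{\ln\mu}{\mathbf{p}}v_i(x)$ around $\lambda=0$ that was set up in the paragraph preceding the lemma. As in Lemma~\ref{lemma_var_general}, the substantive implication is the direction $\Leftarrow$; the converse will then follow because for generic $\lambda$ near $0$ the eigenvalues $\mu$ and $\mu^{-1}$ of $M_\lambda(x)$ are distinct, so $v_1(x),v_2(x)$ form a basis of $\CC^2$ and the matrix $\delta U_\lambda$ is determined by its action on this basis.

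For the direction $\Leftarrow$, I would assume the formula for $\delta U_\lambda$ and abbreviate $A:=-\sum_{i=1}^2\tfrac{(\delta v_i(x))w_i^t(x)}{w_i^t(x)v_i(x)}$, so that the claimed identity reads $\delta U_\lambda=[L_\lambda(x),A]+P_x(\tfrac{\delta\ln\mu}{\mathbf{p}})+\sigma^*P_x(\tfrac{\delta\ln\mu}{\mathbf{p}})$, and then apply both sides to $v_1(x)$. From the biorthogonality $w_2^t(x)v_1(x)=0$ (eigenvectors of $M_\lambda(x)$ and $M_\lambda^t(x)$ for the distinct eigenvalues $\mu$ and $\mu^{-1}$) one gets $A\,v_1(x)=-\delta v_1(x)$, and from the defining properties of $P_x$ built out of the meromorphic maps $v,w$ of Lemma~\ref{lemma_eigenbundle} one gets $P_x(f)v_1(x)=f\,v_1(x)$ and $(\sigma^*P_x(f))v_1(x)=0$ for any meromorphic $f$ on $Y$. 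Writing $[L_\lambda(x),A]v_1(x)=L_\lambda(x)(A\,v_1(x))-A\,(L_\lambda(x)v_1(x))$ and inserting $L_\lambda(x)v_1(x)=\tfrac{\ln\mu}{\mathbf{p}}v_1(x)$ then gives $[L_\lambda(x),A]v_1(x)=-L_\lambda(x)\delta v_1(x)+\tfrac{\ln\mu}{\mathbf{p}}\delta v_1(x)$, and collecting terms yields exactly the first equation of the lemma. Here one uses that $[L_\lambda(x),A]$, although written as a commutator of the first-order operator $L_\lambda(x)$ with a matrix function, equals the \emph{matrix} $\tfrac{d}{dx}A+[U_\lambda,A]$; it is precisely the $\tfrac{d}{dx}A$ summand that makes this Leibniz-rule manipulation compatible with $\delta U_\lambda$ being matrix-valued, exactly as in the zero-curvature formalism.

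The computation for $v_2(x)$ is the same with the replacements $L_\lambda(x)v_2(x)=-\tfrac{\ln\mu}{\mathbf{p}}v_2(x)$, $A\,v_2(x)=-\delta v_2(x)$, $P_x(f)v_2(x)=0$ and $(\sigma^*P_x(f))v_2(x)=(f\circ\sigma)\,v_2(x)$; for $f=\tfrac{\delta\ln\mu}{\mathbf{p}}$ the last term equals $-\tfrac{\delta\ln\mu}{\mathbf{p}}v_2(x)$ since $\sigma^*\mu=\mu^{-1}$ forces $\sigma^*(\delta\ln\mu)=-\delta\ln\mu$, and this reproduces the second equation. For the converse one reads the same chain of equalities backwards: assuming both equations of the lemma, the vectors $\delta U_\lambda v_j(x)$ and $\bigl([L_\lambda(x),A]+P_x(\tfrac{\delta\ln\mu}{\mathbf{p}})+\sigma^*P_x(\tfrac{\delta\ln\mu}{\mathbf{p}})\bigr)v_j(x)$ agree for $j=1,2$, hence the two matrices agree. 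The one point that will need care beyond this bookkeeping — and the step I expect to be the real obstacle — is justifying the differential eigenvalue equation $L_\lambda(x)v_i(x)=\pm\tfrac{\ln\mu}{\mathbf{p}}v_i(x)$ and the precise meaning of the symbols $\tfrac{\ln\mu}{\mathbf{p}}$ and $\delta\ln\mu$ near $\lambda=0$: the $v_i(x)$ are obtained from the Bloch eigenfunctions $\psi_i(x+\mathbf{p})=\mu^{\pm1}\psi_i(x)$ of $\tfrac{d}{dx}+U_\lambda$ by dividing out the Floquet factor $\exp(\pm x\ln\mu/\mathbf{p})$, and since $\ln\mu$ is only available as an asymptotic series at $\lambda=0$ (Corollary~\ref{expansion_mu}), all identities above must be read as identities of asymptotic expansions, valid on a punctured neighbourhood of $\lambda=0$ on which $w_i^t(x)v_i(x)\neq0$. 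Once this is pinned down, everything else is the routine algebra already carried out in the proof of Lemma~\ref{lemma_var_general}.
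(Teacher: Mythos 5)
Your proposal is correct and follows essentially the same route as the paper, which simply says to repeat the calculation of Lemma~\ref{lemma_var_general} with $M_\lambda$ replaced by $L_\lambda(x)$ and $\mu$ by $\tfrac{\ln\mu}{\mathbf{p}}$ (using $\delta\mathbf{p}=0$); your explicit use of biorthogonality, the projector identities $P_x(f)v_1=fv_1$, $\sigma^*P_x(f)v_2=(\sigma^*f)v_2$ with $\sigma^*(\delta\ln\mu)=-\delta\ln\mu$, and the reading of $[L_\lambda(x),A]$ as the matrix $\tfrac{d}{dx}A+[U_\lambda,A]$ just spells out the steps the paper leaves implicit. The closing remarks on the asymptotic meaning of $\tfrac{\ln\mu}{\mathbf{p}}$ near $\lambda=0$ are a reasonable (and welcome) clarification, not a deviation.
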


\begin{proof}
Following the steps from the proof of Lemma \ref{lemma_var_general} and keeping in mind that $\delta \mathbf{p} = 0$ in our situation yields the claim.
\end{proof}

\begin{remark}
Equation \eqref{var_general_U} reflects the decomposition of the tangent space into isospectral and non-isospectral deformations.
\end{remark}

\subsection{Infinitesimal isospectral deformations of $\xi_{\lambda}$ and $U_{\lambda}$}

We will now investigate infinitesimal isospectral deformations. Note that there also exist global isospectral deformations that are described in \cite{Hauswirth_Kilian_Schmidt_1}.

\subsubsection{The real part of $H^1(Y,\mathcal{O})$}
Let us begin with the investigation of $H^1(Y,\mathcal{O})$, the Lie algebra of the Jacobian $\text{Jac}(Y)$. \\

For this, consider disjoint open simply-connected neighborhoods $U_0,U_{\infty}$ of $y_0,y_{\infty}$. Setting $U := Y\backslash\{y_0,y_{\infty}\}$ we get a cover $\mathcal{U} := \{U_0,U,U_{\infty}\}$ of $Y$. The only non-empty intersections of neighborhoods from $\mathcal{U}$ are $U_0\backslash\{y_0\}$ and $U_{\infty}\backslash\{y_{\infty}\}$. Since $U_0$ and $U_{\infty}$ are simply connected we have $H^1(U_0,\mathcal{O}) = 0 = H^1(U_{\infty},\mathcal{O})$. Moreover, $H^1(U,\mathcal{O}) = 0$ since $U$ is a non-compact Riemann surface. This shows that $\mathcal{U}$ is a Leray cover and therefore $H^1(Y,\mathcal{O}) = H^1(\mathcal{U},\mathcal{O})$, see \cite[Theorem 12.8]{Fo}. \\

We will consider triples of the form $(f_0,0,f_{\infty})$ of meromorphic functions with respect to this cover $\mathcal{U}$. Then the Cech-cohomology is induced by all pairs of the form $(f_0 - 0, f_{\infty}-0) = (f_0,f_{\infty})$.

\begin{lemma}\label{Lemma_ML}
The equivalence classes $[h_i]$ of the $g$ tuples $h_i := (f_0^i,f_{\infty}^i)$ given by $h_i = (\nu\lambda^{-i}, -\nu\lambda^{-i})$ for $i=1,\ldots,g$ are a basis of $H^1(Y,\mathcal{O})$. In particular $f_{\infty} = -f_0$.
\end{lemma}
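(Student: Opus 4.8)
The plan is to exhibit explicit Čech cocycles on the Leray cover $\mathcal{U} = \{U_0, U, U_\infty\}$ and to verify two things: that the prescribed tuples $h_i$ genuinely define classes in $H^1(\mathcal{U},\mathcal{O}) = H^1(Y,\mathcal{O})$, and that the $g$ of them are linearly independent. Since $H^1(Y,\mathcal{O})$ has dimension $g$ (the genus of the hyperelliptic curve $Y$ from Section~5), linear independence of $g$ classes forces them to be a basis, so the dimension count does the heavy lifting once independence is established.

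\textbf{Step 1: the cocycles are well-defined.}
Recall that with respect to $\mathcal{U}$ a $1$-cochain assigns meromorphic functions to the two nonempty double intersections $U_0\setminus\{y_0\}$ and $U_\infty\setminus\{y_\infty\}$, and (as noted in the excerpt, restricting to triples of the form $(f_0,0,f_\infty)$) the associated cocycle is the pair $(f_0, f_\infty)$. For $h_i$ I must check that $\nu\lambda^{-i}$ is holomorphic on $U_0\setminus\{y_0\}$ and that $-\nu\lambda^{-i}$ is holomorphic on $U_\infty\setminus\{y_\infty\}$. Near $y_0$ the curve is branched over $\lambda=0$, so $\nu = \lambda^{1/2}(\text{unit})$ is a local coordinate squared; more precisely $\nu^2 = \lambda a(\lambda)$ with $a(0)\neq 0$, so $\nu/\lambda$ has a simple pole of the appropriate order and $\nu\lambda^{-i}$ is meromorphic on $U_0$ with a pole only at $y_0$ — hence holomorphic on the punctured neighborhood. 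The analogous statement at $y_\infty$ uses $\lambda^{2g}\overline{a(\bar\lambda^{-1})} = a(\lambda)$ to control the order of $\nu$ there. So each $h_i$ is a legitimate $1$-cocycle (it is automatically a cocycle since there are no triple intersections), and the reality/consistency relation $f_\infty = -f_0$ is simply the stated form of $h_i$.

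\textbf{Step 2: linear independence.}
Suppose $\sum_{i=1}^g c_i [h_i] = 0$ in $H^1(\mathcal{U},\mathcal{O})$, i.e. the cochain $(\sum c_i \nu\lambda^{-i},\, -\sum c_i \nu\lambda^{-i})$ is a coboundary. This means there exist $F_0 \in \mathcal{O}(U_0)$, $F \in \mathcal{O}(U)$, $F_\infty \in \mathcal{O}(U_\infty)$ with $F - F_0 = \sum c_i \nu\lambda^{-i}$ on $U_0\setminus\{y_0\}$ and $F - F_\infty = -\sum c_i\nu\lambda^{-i}$ on $U_\infty\setminus\{y_\infty\}$. Then the putative global function would glue $F$ on $U$ with $F_0 + \sum c_i\nu\lambda^{-i}$ and $F_\infty - \sum c_i\nu\lambda^{-i}$, but this combination is a global meromorphic function on $Y$ whose only possible poles are at $y_0$ and $y_\infty$, with principal parts exactly the ones dictated by $\sum c_i \nu\lambda^{-i}$ (and its $\sigma$-odd counterpart at the other point). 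The key point is that $d\ln\mu = \tfrac{b(\lambda)}{\nu}\tfrac{d\lambda}{\lambda}$ and the holomorphic differentials on a hyperelliptic curve are $\tfrac{\lambda^{j}d\lambda}{\nu}$ for $j = 0,\dots,g-1$; pairing the class $[h_i]$ against these via the Serre-duality residue pairing $\langle [h], \omega\rangle = \sum \operatorname{Res}(f_0\,\omega)$ at $y_0$ gives a nondegenerate $g\times g$ matrix, because $\operatorname{Res}_{y_0}(\nu\lambda^{-i}\cdot \lambda^{j}d\lambda/\nu) = \operatorname{Res}_{y_0}(\lambda^{j-i}d\lambda)$ is nonzero precisely when $i = j+1$. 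Thus the pairing matrix is (anti-)diagonal and invertible, forcing all $c_i = 0$.

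\textbf{Main obstacle.}
The routine part is the local holomorphy check in Step~1; the genuinely substantive point is Step~2, and within it the cleanest route is the Serre-duality pairing argument rather than an ad hoc "no such global meromorphic function exists" argument. The care needed is in correctly reading off the orders of vanishing/poles of $\nu$ at the branch points $y_0,y_\infty$ from the normalization $|a(0)|=1$, $\lambda^{2g}\overline{a(\bar\lambda^{-1})}=a(\lambda)$, so that the residue computation lands on $\operatorname{Res}(\lambda^{j-i}d\lambda)$ with the exponent ranges matching up to give exactly a $g\times g$ nonsingular matrix. Once that bookkeeping is done, nondegeneracy of the Serre pairing between $H^1(Y,\mathcal{O})$ and $H^0(Y,\Omega^1)$ closes the argument and simultaneously makes transparent the remark that this basis is adapted to the differentials $\lambda^{j}d\lambda/\nu$ appearing in $d\ln\mu$.
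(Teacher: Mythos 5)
Your proposal follows the paper's proof essentially verbatim: both establish the result by pairing the classes $[h_i]$ against the standard basis $\lambda^{j}d\lambda/\nu$ of $H^0(Y,\Omega)$ under the Serre-duality residue pairing and observing that the resulting $g\times g$ matrix is diagonal and invertible, with the dimension count $\dim H^1(Y,\mathcal{O})=g$ finishing the argument. One small caveat: the Serre pairing is the \emph{sum} of the residues at $y_0$ and $y_\infty$ (only that sum is well defined on cohomology classes), not the residue at $y_0$ alone; since the $y_\infty$ contribution of $f_\infty=-f_0$ is likewise diagonal this does not affect your conclusion, and the paper indeed computes both residues to obtain $\langle\omega_i,[h_j]\rangle=2\delta_{ij}$.
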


\begin{proof}
We know that for $i=1,\ldots,g$ the differentials $\omega_i = \frac{\lambda^{i-1}d\lambda}{\nu}$ span a basis for $H^0(Y,\Omega)$ and that the pairing $\langle\cdot,\cdot\rangle: H^0(Y,\Omega)\times H^1(Y,\mathcal{O}) \to \CC$ given by $(\omega,[h]) \mapsto \text{Res}(h\omega)$ is non-degenerate due to Serre duality \cite[Theorem 17.9]{Fo}. Therefore we can calculate the dual basis of $\omega_i$ with respect to this pairing and see
\begin{eqnarray*}
\langle \omega_i,[h_j]\rangle & = & \text{Res}_{\lambda = 0} f_0^j\omega_i + \text{Res}_{\lambda = \infty} f_{\infty}^j\omega_i
 = \text{Res}_{\lambda = 0} \lambda^{i-j-1} d\lambda - \text{Res}_{\lambda = \infty} \lambda^{i-j-1} d\lambda \\
& = & \text{Res}_{\lambda = 0} \lambda^{i-j-1} d\lambda + \text{Res}_{\lambda = 0} \lambda^{-i+j+1} \frac{d\lambda}{\lambda^2}
 = \text{Res}_{\lambda = 0} (\lambda^{i-j-1} + \lambda^{-i+j-1}) d\lambda \\
& = & 2\cdot \delta_{ij}.
\end{eqnarray*}
This shows $\text{span}_{\CC}\{[h_1],\ldots,[h_g]\} = H^1(Y,\mathcal{O})$ and concludes the proof.
\end{proof}

\begin{definition}
Let $H^1_{\RR}(Y,\mathcal{O}) := \{[f] \in H^1(Y,\mathcal{O}) \left|\right. \overline{\eta^*f} = f\}$ be the real part of $H^1(Y,\mathcal{O})$ with respect to the involution $\eta$.
\end{definition}

\begin{lemma}\label{Lemma_reality_H1}
An element $[f] = [(f_0,f_{\infty})] \in H^1(Y,\mathcal{O})$ satisfies $\overline{\eta^*f} = f$ if and only if $f_{\infty} = \eta^*\bar{f}_0$. The corresponding $f_{\infty} = -f_0 = -\sum_{i=0}^{g-1} c_i \lambda^{-i-1} \nu$ satisfies $\bar{c}_i = -c_{g-1-i}$ for $i = 0,\ldots,g-1$. In particular $\dim_{\RR}H^1_{\RR}(Y,\mathcal{O}) = g$. Any element $[f] = [(f_0,\eta^*\bar{f}_0)] \in H^1_{\RR}(Y,\mathcal{O})$ can be represented by $f_0(\lambda,\nu)$ with
$$
f_0(\lambda,\nu) = \sum_{i=0}^{g-1} c_i \lambda^{-i-1} \nu 
% + \eta^*(\sum_{i=0}^{g-1} \overline{c_i \lambda^{-i-1} \nu})
$$
and $\bar{c}_i = -c_{g-1-i}$ for $i = 0,\ldots,g-1$.
\end{lemma}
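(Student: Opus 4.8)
The plan is to establish the reality condition $\overline{\eta^*f} = f$ componentwise and then translate it into the stated linear constraints on the coefficients $c_i$. First I would recall from Lemma \ref{Lemma_ML} that any $[f] \in H^1(Y,\mathcal{O})$ has a representative $(f_0,f_\infty)$ with $f_\infty = -f_0$ and $f_0 = \sum_{i=0}^{g-1} c_i\lambda^{-i-1}\nu$ (a general linear combination of the basis classes $h_i$, reindexed). The involution $\eta$ interchanges the neighborhoods $U_0$ and $U_\infty$ of $y_0$ and $y_\infty$, so $\eta^*f$ as a Čech cochain has components $(\eta^*f_\infty, \eta^*f_0)$ relative to the cover $\mathcal{U}$; hence $\overline{\eta^*f} = f$ in cohomology amounts, after adjusting by a coboundary if necessary, to the two identities $f_0 = \overline{\eta^* f_\infty}$ and $f_\infty = \overline{\eta^* f_0}$, which by $f_\infty = -f_0$ collapse to the single condition $f_\infty = \eta^*\bar f_0$. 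This gives the first assertion.

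Next I would plug in the explicit transformation law of $\eta$ on $(\lambda,\nu)$, namely $\eta:(\lambda,\nu)\mapsto(\bar\lambda^{-1}, \bar\lambda^{-1-g}\bar\nu)$ from Definition \ref{definition_spectral_data}. Applying this to $\bar f_0 = \sum_i \bar c_i\bar\lambda^{-i-1}\bar\nu$ yields
\[
\eta^*\bar f_0 = \sum_{i=0}^{g-1} \bar c_i\, \lambda^{i+1}\, \lambda^{-1-g}\nu = \sum_{i=0}^{g-1} \bar c_i\, \lambda^{i-g}\nu = \sum_{j=0}^{g-1} \bar c_{g-1-j}\, \lambda^{-j-1}\nu,
\]
after the substitution $j = g-1-i$. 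Comparing this with $f_\infty = -f_0 = -\sum_{j=0}^{g-1} c_j\lambda^{-j-1}\nu$ and matching coefficients of $\lambda^{-j-1}\nu$ gives exactly $\bar c_{g-1-j} = -c_j$, i.e. $\bar c_i = -c_{g-1-i}$ for $i=0,\ldots,g-1$, which is the coefficient condition claimed.

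Finally, for the dimension count: the complex vector space $H^1(Y,\mathcal{O})$ is $g$-dimensional with coordinates $(c_0,\ldots,c_{g-1})\in\CC^g$, and $H^1_\RR(Y,\mathcal{O})$ is cut out by the $\RR$-antilinear involution $(c_i)\mapsto(-\bar c_{g-1-i})$. Since this is a real structure (an antilinear involution) on a $g$-dimensional complex space, its fixed locus is a real form of real dimension $g$; one checks the involution is genuinely antilinear and squares to the identity, so $\dim_\RR H^1_\RR(Y,\mathcal{O}) = g$. The representative formula for $[f]$ then follows immediately. The main obstacle I anticipate is the bookkeeping in the first paragraph: one must be careful that $\eta$, being only anti-holomorphic and swapping the two punctures, acts on Čech cochains by both conjugating the functions and permuting the index set of the cover, and that the passage from the cocycle-level identity to the cohomology-level reality condition does not introduce spurious coboundary terms — verifying that the natural representative is already $\eta$-real (rather than merely cohomologous to its conjugate) is where the argument needs genuine care.
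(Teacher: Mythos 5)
Your proposal is correct and follows essentially the same route as the paper: the same explicit computation of $\eta^*\bar f_0$ using $\eta:(\lambda,\nu)\mapsto(\bar\lambda^{-1},\bar\lambda^{-1-g}\bar\nu)$, the same coefficient matching against $-f_0$ yielding $\bar c_i = -c_{g-1-i}$, and the same dimension count on $\CC^g$. The only difference is that you spell out the \v{C}ech bookkeeping for the first assertion (which the paper dismisses as obvious) and phrase the dimension count via the fixed locus of an antilinear involution rather than directly observing the subspace is real $g$-dimensional; both refinements are fine and the coboundary worry you raise dissolves because $\overline{\eta^*}$ permutes the chosen basis representatives exactly.
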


\begin{proof}
% We peform a splitting of $f(\lambda,\nu)$ into the anti-invariant and invariant parts with respect to the involution $\eta$ and get
% \begin{footnotesize}
% $$
% f(\lambda,\nu) = \frac{1}{2}\left(\sum_{i=0}^{g-1} c_i \lambda^{-i-1} \nu - \eta^*(\sum_{i=0}^{g-1} \overline{c_i \lambda^{-i-1} \nu})\right) + \frac{1}{2}\left(\sum_{i=0}^{g-1} c_i \lambda^{-i-1} \nu + \eta^*(\sum_{i=0}^{g-1} \overline{c_i \lambda^{-i-1} \nu})\right).
% $$
% \end{footnotesize}
The first part of the lemma is obvious. Now a direct calculation gives
% Now $\eta^*[\bar{f}] = -[f]$ if and only the invariant part vanishes. 
\begin{footnotesize}
$$
-\eta^*(\sum_{i=0}^{g-1} \overline{c_i \lambda^{-i-1} \nu}) = -\sum_{i=0}^{g-1} \bar{c}_i \lambda^{i+1} \lambda^{-g-1}\nu = -\sum_{i=0}^{g-1} \bar{c}_i \lambda^{i-g} \nu 
 \stackrel{j := g-i-1}{=} -\sum_{j=0}^{g-1} \bar{c}_{g-1-j} \lambda^{-j-1} \nu = \sum_{i=0}^{g-1} c_i \lambda^{-i-1} \nu
$$
\end{footnotesize}
if and only if $\bar{c}_i = -c_{g-1-i}$ for $i = 0,\ldots,g-1$. The subspace of elements $(c_0,\ldots,c_{g-1}) \in \CC^g$ that obey these conditions is a real $g$-dimensional subspace.
\end{proof}

%\begin{corollary}\label{corollary_H1R}
%Any element $[f] = [(f_0,\eta^*\bar{f}_0)] \in H^1_{\RR}(Y,\mathcal{O})$ can be represented by $f_0(\lambda,\nu)$ with
%$$
%f_0(\lambda,\nu) = \sum_{i=0}^{g-1} c_i \lambda^{-i-1} \nu 
%% + \eta^*(\sum_{i=0}^{g-1} \overline{c_i \lambda^{-i-1} \nu})
%$$
%and $\bar{c}_i = -c_{g-1-i}$ for $i = 0,\ldots,g-1$.
%\end{corollary}

\subsubsection{The Krichever construction and the isospectral group action} 
The construction procedure for linear flows on $\text{Jac}(Y)$ is due to Krichever \cite{Krichever_algebraic}. In \cite{McIntosh_harmonic_tori} McIntosh desribes the Krichever construction for finite type solutions of the $\sinh$-Gordon equation. Let us consider the sequence
\beq
0 \to H^1(Y,\ZZ) \to H^1(Y,\mathcal{O}) \stackrel{\exp}{\to} H^1(Y,\mathcal{O}^*) \simeq \text{Pic}(Y) \stackrel{\deg}{\to} H^2(Y,\ZZ) \to 0,
\label{eq_sequence}
\ee
where $\text{Pic}(Y)$ is the Picard variety of $Y$.

\begin{definition}\label{def_quaternionic_divisor}
Let $\text{Pic}_d(Y)$ be the connected component of the Picard variety of $Y$ of divisors of degree $d$ and let
$$
\text{Pic}^{\RR}_d(Y) := \{D \in \text{Pic}_d(Y) \left|\right. \eta(D) - D = (f) \text{ for a merom. } f \text{ with } f\eta^*\bar{f} = -1\}
$$
be the set of \textbf{quaternionic divisors} of degree $d$ with respect to the involution $\eta$.
\end{definition}

Recall that $\text{Jac}(Y) \simeq \text{Pic}_0(Y)$. $\text{Pic}^{\RR}_{d}(Y)$ is also called the \textit{real part} of $\text{Pic}_{d}(Y)$. If we consider $H^1_{\RR}(Y,\mathcal{O})$ and restrict to the connected component $\text{Pic}^{\RR}_0(Y)$ of $\text{Pic}(Y)$, we get a map $L: \RR^g \simeq H^1_{\RR}(Y,\mathcal{O}) \to \text{Pic}^{\RR}_0(Y), (t_0,\ldots,t_{g-1}) \mapsto L(t_0,\ldots,t_{g-1})$ from \eqref{eq_sequence} and Lemma \ref{Lemma_reality_H1}. \\
It is well known that the divisor $D(u,u_y)$ from Definition \ref{definition_eigen_bundle} satisfies $D(u,u_y) \in \text{Pic}^{\RR}_{g+1}(Y)$ (see \cite{Hitchin_Harmonic}). One can also show  \cite[Theorem 5.17]{Knopf_phd} that the action of the tensor product on holomorphic line bundles induces a continuous commutative and transitive group action of $\RR^g$ on $\text{Pic}_{g+1}^{\RR}(Y)$, which is denoted by
$$
\pi: \RR^g \times \text{Pic}_{g+1}^{\RR}(Y) \to \text{Pic}_{g+1}^{\RR}(Y),\;\;\; ((t_0,\ldots,t_{g-1}),E) \mapsto \pi(t_0,\ldots,t_{g-1})(E)
$$
with
$$
\pi(t_0,\ldots,t_{g-1})(E) = E \otimes L(t_0,\ldots,t_{g-1}).
$$
Here $L(t_0,\ldots,t_{g-1})$ is the family in $\text{Pic}_0^{\RR}(Y)$ which is obtained by applying Krichever's construction procedure.

\subsubsection{Loop groups and the Iwasawa decomposition} 
For real $r \in (0,1]$, denote the circle with radius $r$ by $\SS_r = \{\lambda \in \CC \left|\right. |\lambda| = r\}$ and the open disk with boundary $\SS_r$ by $I_r = \{\lambda \in \CC \left|\right. |\lambda| < r\}$. Moreover, the open annulus with boundaries $\SS_r$ and $\SS_{1/r}$ is given by $A_r = \{\lambda \in \CC \left|\right. r <|\lambda| < 1/r\}$ for $r \in (0,1)$. For $r=1$ we set $A_1 := \SS^1$. The loop group $\Lambda_r SL(2,\CC)$ of $SL(2,\CC)$ is the infinite dimensional Lie group of analytic maps from $\SS_r$ to $SL(2,\CC)$, i.e.
$$
\Lambda_r SL(2,\CC) = \mathcal{O}(\SS_r,SL(2,\CC)).
$$
We will also need the following two subgroups of $\Lambda_r SL(2,\CC)$: First let
$$
\Lambda_r SU(2) = \{F \in \mathcal{O}(A_r,SL(2,\CC)) \left|\right. F|_{\SS^1} \in SU(2)\}.
$$
Thus we have
$$
F_{\lambda} \in \Lambda_r SU(2) \; \Longleftrightarrow \; \overline{F_{1/\bar{\lambda}}}^t = F_{\lambda}^{-1}.
$$
The second subgroup is given by
$$
\Lambda_r^+ SL(2,\CC) = \{B \in \mathcal{O}(I_r \cup \SS_r,SL(2,\CC)) \left|\right. B(0) = \left(\begin{smallmatrix} \rho & c \\ 0 & 1/\rho \end{smallmatrix}\right) \text{ for } \rho \in \RR^+ \text{ and } c \in \CC \}.
$$
The normalization $B(0) = B_0$ ensures that
$$
\Lambda_r SU(2) \cap \Lambda_r^+ SL(2,\CC) = \{\unity\}.
$$

Now we have the following important result due to Pressley-Segal \cite{Pressley_Segal} that has been generalized by McIntosh \cite{McIntosh_Toda}.

\begin{theorem}\label{theorem_iwasawa}
The multiplication $\Lambda_r SU(2) \times \Lambda_r^+ SL(2,\CC) \to \Lambda_r SL(2,\CC)$ is a surjective real analytic diffeomorphism. The unique splitting of an element $\phi_{\lambda} \in \Lambda_r SL(2,\CC)$ into 
$$
\phi_{\lambda} = F_{\lambda} B_{\lambda}
$$
with $F_{\lambda} \in \Lambda_r SU(2)$ and $B_{\lambda} \in \Lambda_r^+ SL(2,\CC)$ is called \textbf{r-Iwasawa decomposition} of $\phi_{\lambda}$ or \textbf{Iwasawa decomposition} if $r = 1$.
\end{theorem}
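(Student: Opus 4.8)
The plan is to prove Theorem \ref{theorem_iwasawa} by reducing it to the classical loop group Iwasawa decomposition of Pressley--Segal, which is stated for analytic loops on the circle $\SS^1$, and then propagating the splitting off the unit circle by analytic continuation. First I would recall that for $r=1$ the statement is exactly \cite{Pressley_Segal}: the multiplication $\Lambda \SU(2) \times \Lambda^+ SL(2,\CC) \to \Lambda SL(2,\CC)$ of analytic loops on $\SS^1$ is a bijective real-analytic diffeomorphism, where the normalization $B(0) = \left(\begin{smallmatrix}\rho & c\\ 0 & 1/\rho\end{smallmatrix}\right)$ with $\rho \in \RR^+$ pins down the splitting uniquely; the key input is that $SL(2,\CC) = SU(2)\cdot\big(\text{upper triangular with positive diagonal}\big)$ is the finite-dimensional Iwasawa (Gram--Schmidt) decomposition, and that this extends loop-wise by a Birkhoff-type factorization argument together with the Grassmannian model. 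I would cite this rather than reprove it.

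Next, for general $r \in (0,1)$, I would observe that an element $\phi_\lambda \in \Lambda_r SL(2,\CC)$ is by definition analytic on the circle $\SS_r$, hence extends to an annulus around $\SS_r$. Its restriction to $\SS^1$ — after noting that elements of $\Lambda_r SU(2)$ are by definition analytic on the whole annulus $A_r \supset \SS^1$, while elements of $\Lambda_r^+ SL(2,\CC)$ are analytic on $I_r \cup \SS_r \supset \SS^1$ — is an ordinary analytic loop on $\SS^1$, so Pressley--Segal gives $\phi_\lambda = F_\lambda B_\lambda$ there. The content is to show the factors have the required domains of analyticity: $F_\lambda$ must extend analytically to $A_r$ and satisfy $\overline{F_{1/\bar\lambda}}^t = F_\lambda^{-1}$ on all of $A_r$ (equivalently, unitary on $\SS^1$), and $B_\lambda$ must extend analytically to $I_r \cup \SS_r$. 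Since $B_\lambda = F_\lambda^{-1}\phi_\lambda$ and $\phi_\lambda$ is analytic on a neighborhood of $\SS_r$, once $F_\lambda$ is known to extend to $A_r$ (which contains a neighborhood of $\SS_r$ from outside) the product $F_\lambda^{-1}\phi_\lambda$ is analytic on $I_r \cup \SS_r$, using that $F_\lambda$ is analytic there and agrees on $\SS^1$ with the Pressley--Segal factor. The reality condition $\overline{F_{1/\bar\lambda}}^t = F_\lambda^{-1}$ forces $F_\lambda$'s values on $\SS_{1/r}$ to be the inverse-conjugate-transposes of its values on $\SS_r$, so the extension of $F$ from $\SS^1$ to $A_r$ is governed by extending the $B$-factor: $B_\lambda$ being analytic on $I_r$ means $F_\lambda = \phi_\lambda B_\lambda^{-1}$ is analytic wherever $\phi_\lambda$ and $B_\lambda^{-1}$ are, i.e. on $\SS_r$ and outward; combined with unitarity on $\SS^1$ one bootstraps analyticity across $A_r$. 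Uniqueness follows from $\Lambda_r SU(2) \cap \Lambda_r^+ SL(2,\CC) = \{\unity\}$, already recorded in the text: if $F_\lambda B_\lambda = F_\lambda' B_\lambda'$ then $(F_\lambda')^{-1}F_\lambda = B_\lambda'B_\lambda^{-1}$ lies in the intersection, hence equals $\unity$.

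For the real-analyticity and diffeomorphism claim, I would argue that the $r$-Iwasawa map is the restriction of the $r=1$ Iwasawa map to the closed subset of loops admitting the relevant analytic continuations, and that the inverse is real-analytic because the Pressley--Segal inverse is and the continuation operators (Cauchy-integral extension from $\SS^1$ to a fixed annulus) are bounded linear, hence real-analytic, maps between the relevant Banach spaces of analytic loops. Alternatively, and perhaps more cleanly, one can cite McIntosh \cite{McIntosh_Toda}, who carried out precisely this $r$-dependent generalization; in that case the proof is a one-line reduction. The main obstacle I anticipate is the bookkeeping of domains of analyticity — making rigorous that the Pressley--Segal factors, a priori only analytic on $\SS^1$, actually extend to $A_r$ respectively $I_r \cup \SS_r$ when the input $\phi_\lambda$ is analytic on $\SS_r$. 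This is a soft argument (analytic continuation along the chain $\SS_r \to A_r$, using that $\phi_\lambda$ and one of the two factors is already defined there), but it is the only non-formal point; everything else is either classical or already stated in the excerpt.
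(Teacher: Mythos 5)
The paper offers no proof of this statement at all: it is quoted as a known result of Pressley--Segal, generalized to $r<1$ by McIntosh, so your fallback option of simply citing \cite{McIntosh_Toda} coincides exactly with what the paper does. If that is the route you take, there is nothing further to check.

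Your primary, self-contained argument, however, has a genuine gap at its very first step. An element $\phi_{\lambda}\in\Lambda_r SL(2,\CC)=\mathcal{O}(\SS_r,SL(2,\CC))$ is analytic only on some (possibly arbitrarily thin) open annulus around the circle $\SS_r$ of radius $r<1$; it is in general not defined on $\SS^1$, so there is no ``restriction to $\SS^1$'' to which the $r=1$ Pressley--Segal theorem could be applied. Relatedly, your inclusion $I_r\cup\SS_r\supset\SS^1$ is false: $I_r\cup\SS_r$ is the closed disk of radius $r<1$ and does not meet $\SS^1$. The logical structure of the $r$-Iwasawa decomposition is the opposite of what your reduction assumes: the \emph{output} factors $F_{\lambda}$ and $B_{\lambda}$ live on the large domains $A_r$ and $I_r\cup\SS_r$, while the \emph{input} $\phi_{\lambda}$ lives only near $\SS_r$, and the whole content of the theorem (and the reason it is credited to McIntosh as a genuine generalization rather than a corollary of the $r=1$ case) is that factorizing a merely $\SS_r$-analytic loop produces factors that extend that far. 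This must be established via the Grassmannian/Birkhoff machinery adapted to the circle $\SS_r$ together with the reality condition $\overline{F_{1/\bar\lambda}}^t=F_{\lambda}^{-1}$, which couples the behaviour on $\SS_r$ to that on $\SS_{1/r}$; it cannot be obtained by analytic continuation from $\SS^1$. Your subsequent ``bootstrap'' of analyticity of $F_{\lambda}$ across $A_r$ is circular for the same reason: it presupposes that $B_{\lambda}$ is already analytic on $I_r$, which is part of what has to be proved. The uniqueness argument via $\Lambda_r SU(2)\cap\Lambda_r^+ SL(2,\CC)=\{\unity\}$ is fine. So either cite \cite{Pressley_Segal} and \cite{McIntosh_Toda} as the paper does, or be prepared to redo the loop-group factorization over $\SS_r$ from scratch rather than deducing it from the unit-circle case.
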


\begin{remark}
The r-Iwasawa decomposition also holds on the Lie algebra level, i.e. $\Lambda_r \mathfrak{sl}_2(\CC) = \Lambda_r \mathfrak{su}_2 \oplus \Lambda_r^+ \mathfrak{sl}_2(\CC)$. This decomposition will play a very important role in the following.
\end{remark}

In order to describe the infinitesimal isospectral deformations of $\xi_{\lambda}$ and $U_{\lambda}$, we follow the exposition of \cite{Audin}, IV.2.e. Let us transfer those methods to our situation.

\begin{theorem}\label{theorem_vf_isospectral_xi}
Let $f_0(\lambda,\nu) = \sum_{i=0}^{g-1} c_i \lambda^{-i-1} \nu$ be the representative of $[(f_0,\eta^*\bar{f}_0)] \in H^1_{\RR}(Y,\mathcal{O})$ from Lemma \ref{Lemma_reality_H1} and let
$$
A_{f_0} := P(f_0) + \sigma^*P(f_0) = \sum_{i=0}^{g-1} c_i \lambda^{-i} (P(\lambda^{-1}\nu) + \sigma^*P(\lambda^{-1}\nu)) = \sum_{i=0}^{g-1} c_i \lambda^{-i} \xi_{\lambda}
$$
be the induced element in $\Lambda_r\mathfrak{sl}_2(\CC)$. Then the vector field of the isospectral action $\pi: \RR^g \times \text{Pic}_{g+1}^{\RR}(Y) \to \text{Pic}_{g+1}^{\RR}(Y)$ at $E(u,u_y)$ takes the value
$$
\dot{\xi}_{\lambda} = [A_{f_0}^+,\xi_{\lambda}] = -[\xi_{\lambda},A_{f_0}^-].
$$
%under the equivariant map $E \mapsto \xi_{\lambda}(E)$ from Proposition \ref{proposition_equivariant_map}. 
Here $A_{f_0} = A_{f_0}^+ + A_{f_0}^-$ is the Lie algebra decomposition of the Iwasawa decomposition.
\end{theorem}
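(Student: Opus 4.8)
The plan is to compute the infinitesimal action of the isospectral group $\pi$ explicitly and match it against Lemma~\ref{lemma_var_general} and its $U_\lambda$-counterpart Lemma~\ref{general_variation_U}. Concretely, I would start from the Krichever/McIntosh description: the family $L(t_0,\ldots,t_{g-1}) \in \text{Pic}_0^\RR(Y)$ is obtained by exponentiating the class $[f_0] \in H^1_\RR(Y,\mathcal{O})$ via the sequence \eqref{eq_sequence}, and tensoring the eigenline bundle $E(u,u_y)$ by it corresponds, on the level of the transition functions of the cover $\mathcal{U} = \{U_0,U,U_\infty\}$, to multiplying the eigenvector $v$ by $\exp(tf_0)$ on $U_0$ (and by $\exp(t\,\eta^*\bar f_0)$ on $U_\infty$). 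Differentiating at $t=0$ gives $\dot v = f_0\, v$ on $U_0$, i.e.\ the infinitesimal deformation of the eigenvector is $\delta v_i = f_0 v_i$ (up to the Čech-coboundary ambiguity that does not affect the bundle). Since this is an \emph{isospectral} deformation, $\delta\mu = 0$, so the source terms $P(\delta\mu) + \sigma^*P(\delta\mu)$ in Lemma~\ref{lemma_var_general} and Lemma~\ref{general_variation_U} drop out.

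Next I would feed $\delta v_i = f_0 v_i$ into the commutator term. By Lemma~\ref{lemma_var_general} (applied in the $U_\lambda$-version, Lemma~\ref{general_variation_U}, around $\lambda=0$, where the relevant deformation lives), the induced variation of $U_\lambda$ — equivalently, after conjugating back by $F_\lambda$, the variation of $\xi_\lambda$ — is $[\,\cdot\,,\, \xi_\lambda]$ applied to $-\sum_i \frac{(\delta v_i) w_i^t}{w_i^t v_i} = -\sum_i f_0 \frac{v_i w_i^t}{w_i^t v_i} = -\big(P(f_0) + \sigma^*P(f_0)\big) = -A_{f_0}$, using the definition $P(f) = \frac{vfw^t}{w^tv}$ and that the second eigenvector pair gives the $\sigma^*$-term. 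Hence $\dot\xi_\lambda = [\xi_\lambda, -A_{f_0}] = -[A_{f_0},\xi_\lambda]$ formally. The identity $A_{f_0} = \sum_i c_i\lambda^{-i}\xi_\lambda$ stated in the theorem is just the computation $P(\lambda^{-1}\nu) + \sigma^*P(\lambda^{-1}\nu) = \zeta_\lambda(0)/1 = \xi_\lambda$ (from the displayed formula $\zeta_\lambda(x) = P_x(\tfrac\nu\lambda) + \sigma^*P_x(\tfrac\nu\lambda)$ evaluated at $x=0$), so $[A_{f_0},\xi_\lambda]=0$ as elements of $\Lambda_r\mathfrak{sl}_2(\CC)$ — which is exactly why one must split $A_{f_0}$.

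The crux is therefore the passage from the \emph{ambient} deformation $[A_{f_0},\xi_\lambda]=0$ to the genuine deformation inside $\mathcal{P}_g$: the naive $A_{f_0}$ is meromorphic on $Y$ but, pushed down to a $\lambda$-dependent matrix, it is not in the right loop algebra to generate a flow preserving the reality condition and the polynomial normalization. This is resolved by the $r$-Iwasawa splitting $A_{f_0} = A_{f_0}^+ + A_{f_0}^-$ with $A_{f_0}^+ \in \Lambda_r\mathfrak{su}_2$ and $A_{f_0}^- \in \Lambda_r^+\mathfrak{sl}_2(\CC)$: since $[A_{f_0},\xi_\lambda]=0$ we have $[A_{f_0}^+,\xi_\lambda] = -[A_{f_0}^-,\xi_\lambda] = -[\,\cdot\,]$ reversed $= [\xi_\lambda,A_{f_0}^-]\cdot(-1)$, giving the two expressions $\dot\xi_\lambda = [A_{f_0}^+,\xi_\lambda] = -[\xi_\lambda,A_{f_0}^-]$; the first lies in $\Lambda_r\mathfrak{su}_2$ and the second, being a commutator with a $\Lambda_r^+$ element, lands in the polynomial part, so together they exhibit $\dot\xi_\lambda$ as tangent to $\mathcal{P}_g$. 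I expect the main obstacle to be making rigorous the claim that tensoring by $L(t)$ induces precisely $\delta v = f_0 v$ and that the resulting $\Lambda_r\mathfrak{sl}_2(\CC)$-valued object is $A_{f_0}$ — i.e.\ correctly tracking the Krichever construction through the Iwasawa decomposition à la \cite{Audin}, IV.2.e, and checking the convergence radius $r$ so that $A_{f_0}$ genuinely defines an element of $\Lambda_r\mathfrak{sl}_2(\CC)$ to which Theorem~\ref{theorem_iwasawa} applies.
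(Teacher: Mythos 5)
There is a genuine gap at the central step. You take the infinitesimal deformation of the eigenvector to be $\delta v_i = f_0 v_i$ and feed it into the commutator term of Lemma \ref{lemma_var_general}; this produces $\sum_i \frac{(\delta v_i) w_i^t}{w_i^t v_i} = P(f_0)+\sigma^*P(f_0) = A_{f_0}$, hence a variation of the form $\pm[A_{f_0},\xi_\lambda]$ --- which, as you yourself observe, vanishes identically because $A_{f_0}=\sum_i c_i\lambda^{-i}\xi_\lambda$ commutes with $\xi_\lambda$. At that point your computation has shown $\dot\xi_\lambda=0$, and the subsequent appeal to the Iwasawa splitting does not repair it: from $[A_{f_0},\xi_\lambda]=0$ you only deduce that $[A_{f_0}^+,\xi_\lambda]$ and $-[A_{f_0}^-,\xi_\lambda]$ are equal \emph{to each other}, not that either equals $\dot\xi_\lambda$. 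Nothing in your argument singles out $A_{f_0}^+$ (rather than $A_{f_0}$, or $A_{f_0}^-$, or zero) as the generator of the flow; that identification is precisely the content of the theorem, so asserting ``one must split'' is circular.

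The missing idea is that $f_0 v$ is \emph{not} the variation of the normalized eigenvector section. The relation $\widetilde v = e^{\beta_0(t)}v$ holds only locally on $U_0$ (resp.\ with $\eta^*\bar f_0$ on $U_\infty$), since $e^{\beta_0(t)}$ is a \v{C}ech cochain and not a global function on $Y$. Differentiating gives $\delta\widetilde v = e^{\beta_0(t)}(A_{f_0}v+\delta v)$, and since $(f_0)\geq 0$ on $Y^*$ the combination $A_{f_0}v+\delta v$ extends to a holomorphic section of $\mathcal{O}_D$ on all of $Y^*$; this is what \emph{defines} $A_{f_0}^+$ through $A_{f_0}v+\delta v = A_{f_0}^+v$, i.e.\ the genuine variation of the section is $\delta v = -A_{f_0}^- v$ (Remark \ref{remark_Af_minus}), not $f_0 v$. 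With this corrected $\delta v$ your own commutator formula gives $\sum_i\frac{(\delta v_i)w_i^t}{w_i^t v_i}=-A_{f_0}^-$ and hence $\dot\xi_\lambda=-[A_{f_0}^-,\xi_\lambda]=[A_{f_0}^+,\xi_\lambda]$, which is nonzero in general. (The paper in fact bypasses Lemma \ref{lemma_var_general} entirely: from $\xi_\lambda v=\nu v$ and $\xi_\lambda(A_{f_0}^+v-\delta v)=\nu(A_{f_0}^+v-\delta v)$ it derives $[A_{f_0}^+,\xi_\lambda]v+\xi_\lambda\delta v=\nu\delta v$, and comparison with the derivative of $\xi_\lambda v=\nu v$, using $\dot\nu=0$, yields $\dot\xi_\lambda v=[A_{f_0}^+,\xi_\lambda]v$ on a basis of eigenvectors.) Your surrounding remarks --- the identity $A_{f_0}=\sum c_i\lambda^{-i}\xi_\lambda$, the role of the reality condition, the choice of radius $r$ --- are sound, but without the step $\delta v=-A_{f_0}^-v$ the proof does not close.
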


\begin{proof}
% Since $P(\cdot) = \sum_{i=1}^2 \frac{v_i (\cdot) w_i^t}{w_i^t v_i}$ 
We write $\nu$ for $\widetilde{\nu}$. Obviously there holds $A_{f_0} v = f_0 v$. If we write $\widetilde{v} = e^{\beta_0(t)}v$ for local sections $\widetilde{v}$ of $\mathcal{O}_D \otimes L(t_0,\ldots,t_{g-1})$ and $v$ of $\mathcal{O}_D$ with $\beta_0(t) = \sum_{i=0}^{g-1}c_i t_i \lambda^{-i-1} \nu$ we get
\begin{eqnarray*}
\delta \widetilde{v} & = & \dot{\beta}_0(t)\widetilde{v} + e^{\beta_0(t)}\delta v = f_0\widetilde{v} + e^{\beta_0(t)}\delta v \\
& = & A_{f_0} \widetilde{v} + e^{\beta_0(t)}\delta v \\
& = & e^{\beta_0(t)}(A_{f_0} v + \delta v).
\end{eqnarray*}
Moreover, $(f_0) \geq 0$ on $Y^*$. This shows that $A_{f_0} v +\delta v$ is a vector-valued section of $\mathcal{O}_{D}$ on $Y^*$ and defines a map $A_{f_0}^+$ such that
$$
A_{f_0} v + \delta v = A_{f_0}^+ v
$$
% Since $v_1 = 1$ and $(\delta v)_1 = 0$ we get $f_0 = (A_{f_0}^+ v)_1$. 
holds. Since $A_{f_0} v = f_0 v$ we also obtain the equations
$$
\begin{cases}
\xi_{\lambda} v = \nu v, \\
\xi_{\lambda}(A_{f_0}^+ v -\delta v) = \nu(A_{f_0}^+ v -\delta v).
\end{cases}
$$
This implies
$$
\xi_{\lambda}\delta v + [A_{f_0}^+,\xi_{\lambda}]v = \nu \delta v.
$$
Differentiating the equation $\xi_{\lambda} v = \nu v$ we additionally obtain
$$
\dot{\xi}_{\lambda} v + \xi_{\lambda}\delta v = \dot{\nu}v + \nu\delta v = \nu\delta v.
$$
Combining the last two equations yields
$$
\dot{\xi}_{\lambda} v = [A_{f_0}^+,\xi_{\lambda}]v
$$
and concludes the proof since this equation holds for a basis of eigenvectors.
\end{proof}

\begin{remark}\label{remark_Af_minus}
The decomposition of $A_f \in \Lambda_r\mathfrak{sl}(2,\CC) = \Lambda_r \mathfrak{su}_2 \oplus \Lambda^+_{r}\mathfrak{sl}_2(\CC)$ yields $A_{f_0} = A_{f_0}^+ + A_{f_0}^-$ and therefore $A_{f_0} v + \delta v = A_{f_0}^+ v$ implies
$$
\delta v = -A_{f_0}^- v.
$$
In particular $A_{f_0}^-$ is given by $A_{f_0}^- = -\sum \frac{\delta v w^t}{w^t v}$.
%  and consequently
% $$
% A_{f_0}^+ = \sum \frac{v f w^t}{w^t v} + \sum \frac{\delta v w^t}{w^t v}.
% $$
\end{remark}

We want to extend Theorem \ref{theorem_vf_isospectral_xi} to obtain the value of the vector field induced by $\pi: \RR^g \times \text{Pic}_{g+1}^{\RR}(Y) \to \text{Pic}_{g+1}^{\RR}(Y)$ for $U_{\lambda}$.

\begin{theorem}\label{theorem_vf_isospectral_U}
Let $f_0(\lambda,\nu) = \sum_{i=0}^{g-1} c_i \lambda^{-i-1} \nu$ be the representative of $[(f_0,\eta^*\bar{f}_0)] \in H^1_{\RR}(Y,\mathcal{O})$ from Lemma \ref{Lemma_reality_H1} and let
$$
A_{f_0}(x) := P_x(f_0) + \sigma^*P_x(f_0) = \sum_{i=0}^{g-1} c_i \lambda^{-i} (P_x(\lambda^{-1}\nu) + \sigma^*P_x(\lambda^{-1}\nu)) = \sum_{i=0}^{g-1} c_i \lambda^{-i} \zeta_{\lambda}(x)
$$
be the induced map $A_{f_0}: \RR \to \Lambda_r\mathfrak{sl}_2(\CC)$. Then the vector field of the isospectral action $\pi: \RR^g \times \text{Pic}_{g+1}^{\RR}(Y) \to \text{Pic}_{g+1}^{\RR}(Y)$ at $E(u,u_y)$ takes the value
$$
\delta U_{\lambda}(x) = [A_{f_0}^+(x),L_{\lambda}(x)] = [L_{\lambda}(x),A_{f_0}^-(x)].
$$
Here $A_{f_0}(x) = A_{f_0}^+(x) + A_{f_0}^-(x)$ is the Lie algebra decomposition of the Iwasawa decomposition.
\end{theorem}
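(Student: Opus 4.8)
The plan is to mimic the proof of Theorem \ref{theorem_vf_isospectral_xi}, but with $\xi_\lambda$ replaced by the polynomial Killing field $\zeta_\lambda(x)$ and the eigenvalue equation $\xi_\lambda v = \nu v$ replaced by the eigenvector equation for the Lax operator $L_\lambda(x)$ along the $x$-axis. Recall from Section 5 that $\zeta_\lambda(x) = F_\lambda^{-1}(x)\xi_\lambda F_\lambda(x)$ and $P_x(f) = F_\lambda^{-1}(x) P(f) F_\lambda(x)$, so that $A_{f_0}(x) = F_\lambda^{-1}(x) A_{f_0} F_\lambda(x)$ with $A_{f_0} = P(f_0)+\sigma^*P(f_0)$ as in Theorem \ref{theorem_vf_isospectral_xi}. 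Write $v(x) := F_\lambda^{-1}(x) v$ and $w^t(x) := w^t F_\lambda(x)$ for the eigenvectors transported along the frame; these are exactly the eigenvectors of $\zeta_\lambda(x)$ (with eigenvalue $\widetilde\nu = \nu/\lambda$), and they satisfy the first-order system $L_\lambda(x) v(x) = \tfrac{\ln\mu}{\mathbf p} v(x)$ near $\lambda = 0$ used in Lemma \ref{general_variation_U}.

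First I would establish the Lie-algebra decomposition $A_{f_0}(x) = A_{f_0}^+(x) + A_{f_0}^-(x)$ relative to the $r$-Iwasawa splitting $\Lambda_r\mathfrak{sl}_2(\CC) = \Lambda_r\mathfrak{su}_2 \oplus \Lambda_r^+\mathfrak{sl}_2(\CC)$, and observe that conjugation by $F_\lambda(x) \in \Lambda_r SU(2)$ does not interact cleanly with this splitting — this is the main subtlety and I address it below. Next, by Remark \ref{remark_Af_minus} applied pointwise in $x$, the isospectral variation of the transported eigenvector is $\delta v(x) = -A_{f_0}^-(x) v(x)$, i.e. $\sum_i \frac{\delta v_i(x)\, w_i^t(x)}{w_i^t(x) v_i(x)} = -A_{f_0}^-(x)$. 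Now I feed this into Lemma \ref{general_variation_U}: since $A_{f_0}$ corresponds to an isospectral deformation, $\delta\ln\mu = 0$, so equation \eqref{var_general_U} collapses to
$$
\delta U_\lambda(x) = \left[L_\lambda(x), -\sum_{i=1}^2 \frac{(\delta v_i(x)) w_i^t(x)}{w_i^t(x) v_i(x)}\right] = [L_\lambda(x), A_{f_0}^-(x)].
$$
Finally, since $A_{f_0}^+(x) + A_{f_0}^-(x) = A_{f_0}(x) = \sum_i c_i \lambda^{-i}\zeta_\lambda(x)$ and $\zeta_\lambda$ commutes with $L_\lambda(x)$ along $y=0$ (this is precisely the polynomial Killing field equation $\frac{d}{dx}\zeta_\lambda = [\zeta_\lambda, U_\lambda(\cdot,0)]$, equivalently $[L_\lambda(x),\zeta_\lambda(x)] = 0$), we get $[L_\lambda(x), A_{f_0}(x)] = 0$, hence $[L_\lambda(x), A_{f_0}^+(x)] = -[L_\lambda(x), A_{f_0}^-(x)]$, which gives the second equality $\delta U_\lambda(x) = [A_{f_0}^+(x), L_\lambda(x)]$.

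The hard part will be justifying that the Iwasawa decomposition of $A_{f_0}(x)$ is well-defined and varies smoothly in $x$, and that it is compatible with the decomposition of $\delta v(x)$ in the way Remark \ref{remark_Af_minus} requires — i.e. that $A_{f_0}^-(x)$ really is the $\Lambda_r^+$-part and equals $-\sum \frac{\delta v(x) w^t(x)}{w^t(x) v(x)}$ rather than some $x$-conjugate of the $x=0$ object. Here one uses that the $x$-evolution of $v(x)$ is governed by $U_\lambda(x,0)$, which lies in the loop algebra on the annulus $A_r$, so that $F_\lambda(x) \in \Lambda_r SU(2)$; conjugation by $\Lambda_r SU(2)$ preserves $\Lambda_r\mathfrak{su}_2$ but not $\Lambda_r^+\mathfrak{sl}_2(\CC)$, so one cannot simply transport the splitting from Theorem \ref{theorem_vf_isospectral_xi}. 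Instead the argument should be run intrinsically at each $x$: the pole structure $(f_0) \ge 0$ on $Y^*$ still holds after transport, so $A_{f_0}(x) v(x) + \delta v(x)$ extends holomorphically across the divisor and defines $A_{f_0}^+(x)$ exactly as in the proof of Theorem \ref{theorem_vf_isospectral_xi}, with the positivity/holomorphy statement now read on the frame-twisted eigenline bundle. Once this pointwise construction is in place, smoothness in $x$ follows from smoothness of $F_\lambda(x)$ and the Iwasawa splitting (Theorem \ref{theorem_iwasawa}), and the rest is the bookkeeping indicated above.
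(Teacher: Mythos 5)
Your proposal is correct and follows essentially the same route as the paper: the paper likewise transports the construction of Theorem \ref{theorem_vf_isospectral_xi} pointwise in $x$ to obtain $A_{f_0}(x)v(x)+\delta v(x)=A_{f_0}^+(x)v(x)$ and then uses the eigenvalue equation $L_\lambda(x)v(x)=\tfrac{\ln\mu}{\mathbf{p}}v(x)$ together with $\delta\ln\mu=0$; the only difference is one of ordering, since the paper derives $\delta U_\lambda=[A_{f_0}^+(x),L_\lambda(x)]$ directly from the two eigenvalue equations while you obtain $[L_\lambda(x),A_{f_0}^-(x)]$ by specializing Lemma \ref{general_variation_U}, the two forms being equivalent via $[L_\lambda(x),A_{f_0}(x)]=0$. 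Your observation that the Iwasawa splitting is not equivariant under conjugation by $F_\lambda(x)$, so that $A_{f_0}^{\pm}(x)$ must be constructed intrinsically at each $x$, is a genuine subtlety that the paper passes over with ``in analogy to the proof of Theorem \ref{theorem_vf_isospectral_xi}''.
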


\begin{proof}
Obviously $A_{f_0}(x) v(x) = f_0 v(x)$. In analogy to the proof of Theorem \ref{theorem_vf_isospectral_xi} we obtain a map $A_{f_0}^+(x)$ such that
$$
A_{f_0}(x) v(x) + \delta v(x) = A_{f_0}^+(x) v(x)
$$
holds. 
% Since $v_1(x) = 1$ and $(\delta v(x))_1 = 0$ we get $f_0 = (A_{f_0}^+(x) v(x))_1$. 
Since $A_{f_0}(x) v(x) = f_0 v(x)$ we also obtain the equations
$$
\begin{cases}
L_{\lambda}(x) v(x) = \tfrac{\ln\mu}{\mathbf{p}} v(x), \\
L_{\lambda}(x) (A_{f_0}^+(x) v(x) -\delta v(x)) = \tfrac{\ln\mu}{\mathbf{p}}(A_{f_0}^+(x) v(x) -\delta v(x))
\end{cases}
$$
around $\lambda = 0$. This implies
$$
L_{\lambda}(x)\delta v(x) + [A_{f_0}^+(x),L_{\lambda}(x)]v(x) = \tfrac{\ln\mu}{\mathbf{p}} \delta v(x).
$$
Differentiating the equation $L_{\lambda}(x) v(x) = \tfrac{\ln\mu}{\mathbf{p}} v(x)$ we additionally obtain
$$
\delta L_{\lambda}(x) v(x) + L_{\lambda}(x) \delta v(x) = \tfrac{\delta\ln\mu}{\mathbf{p}} v(x) + \tfrac{\ln\mu}{\mathbf{p}} \delta v(x) = \tfrac{\ln\mu}{\mathbf{p}} \delta v(x).
$$
Combining the last two equations yields
$$
\delta L_{\lambda}(x) v(x) = \delta U_{\lambda}(x) v(x) = [A_{f_0}^+(x),L_{\lambda}(x)]v(x).
$$
\end{proof}

\subsection{Infinitesimal deformations of spectral curves}
\label{subsection_spectral_curve_deformation}
% We want to establish a connection between periodic Cauchy data $(u,u_y)$ of real finite type solutions of the $\sinh$-Gordon equation and spectral curves $Y$ with the properties described in Chapter 3.

\begin{definition}
Let $\Sigma_g^{\mathbf{p}}$ denote the space of smooth hyperelliptic Riemann surfaces $Y$ of genus $g$ with the properties described in Theorem \ref{theorem_spectral_curve}, such that $d\ln\mu$ has no roots at the branchpoints of $Y$.
\end{definition}

We will now investigate deformations of $\Sigma_g^{\mathbf{p}} \simeq \mathcal{M}^1_g(\mathbf{p})$. For this, following the expositions of \cite{Grinevich_Schmidt, Hauswirth_Kilian_Schmidt_2, Kilian_Schmidt_cylinders}, we derive vector fields on open subsets of $\mathcal{M}^1_g(\mathbf{p})$ and parametrize the corresponding deformations by a parameter $t \in [0,\varepsilon)$. We consider deformations of $Y(u,u_y)$ that preserve the periods of $d\ln\mu$. We already know that
\begin{gather*}
\int_{a_i} d\ln\mu = 0 \;\; \text{ and } \;\; \int_{b_i} d\ln\mu \in 2\pi i \ZZ \;\; \text{ for } i = 1,\ldots,g.
\end{gather*}
Considering the Taylor expansion of $\ln\mu$ with respect to $t$ we get
$$
\ln\mu(t) = \ln\mu(0) + t\, \del_t\ln\mu(0) + O(t^2)
$$
and thus
$$
d \ln\mu(t) = d \ln\mu(0) + t\, d \del_t \ln\mu(0) + O(t^2).
$$
Given a closed cycle $\gamma \in H_1(Y,\ZZ)$ we have
$$
\int_{\gamma} d \ln\mu(t) = \int_{\gamma} d \ln\mu(0) + t \int_{\gamma} d \del_t \ln\mu(0) + O(t^2)
$$
and therefore
$$
\frac{d}{dt}\left(\int_{\gamma} d \ln\mu(t)\right)\bigg|_{t=0} = \int_{\gamma} d \del_t \ln\mu(0) = 0.
$$
This shows that deformations resulting from the prescription of $\del_t \ln\mu|_{t=0}$ at $t=0$ preserve the periods of $d\ln\mu$ infinitesimally along the deformation and thus are \textit{isoperiodic}. If we set $\dot{\lambda} = 0$ we can consider $\ln\mu$ as a function of $\lambda$ and $t$ and get
$$
\ln\mu(\lambda,t) = \ln\mu(\lambda,0) + t\, \del_t\ln\mu(\lambda,0) + O(t^2)
$$
as well as
$$
 \del_{\lambda}\ln\mu(\lambda,t) = \del_{\lambda}\ln\mu(\lambda,0) + t\, \del^2_{\lambda\,t} \ln\mu(\lambda,0) + O(t^2).
$$
%We know that $\widehat{Y} = \{(\lambda,\mu) \in \CC^*\times\CC^* \left|\right. R(\lambda,\mu) = \mu^2 - \Delta(\lambda)\mu + 1 = 0\}$. Differentiating the expression $R(\lambda,\mu)= 0$ with respect to $\lambda$ we get
%$$
%2\mu\del_{\lambda}\mu - \Delta'(\lambda)\mu - \Delta(\lambda)\del_{\lambda}\mu = 2\mu^2\frac{\del_{\lambda}\mu}{\mu} - \Delta'(\lambda)\mu - \mu\Delta(\lambda)\frac{\del_{\lambda}\mu}{\mu} = 0
%$$
%and therefore
%$$
%\del_{\lambda}\ln\mu =  \frac{\Delta'(\lambda)\mu}{2\mu^2 - \mu\Delta(\lambda)} =  \frac{\Delta'(\lambda)}{2\mu - \Delta(\lambda)}
%$$
%on $\widehat{Y}$. 
On the compact spectral curve $Y$ the function $\del_{\lambda}\ln\mu$ is given by
$$
\del_{\lambda}\ln\mu = \frac{b(\lambda)}{\lambda \, \nu}
$$
and the compatibility condition $\del^2_{t\, \lambda} \ln\mu|_{t=0} = \del^2_{\lambda\, t} \ln\mu|_{t=0}$ will lead to a deformation of the spectral data $(a,b)$ or equivalently to a deformation of the spectral curve $Y$ with its differential $d\ln\mu$. Therefore this deformation is \textit{non-isospectral}. In the following we will investigate which conditions $\delta\ln\mu := (\del_t \ln\mu)|_{t=0}$ has to obey in order obtain such a deformation. \\
% From Theorem \ref{theorem_tangentspace_spectral_curve} we know that $T_{Y(u,u_y)}\mathcal{M}_g \simeq H^0(Y(u,u_y),\Omega)$.

\noindent{\bf The Whitham deformation.} Following the ansatz given in \cite{Hauswirth_Kilian_Schmidt_2} we consider the function $\ln\mu$ as a function of $\lambda$ and $t$ and write $\ln\mu$ locally as
$$
\ln\mu =
\begin{cases}
f_{\alpha_i}(\lambda)\sqrt{\lambda-\alpha_i} + \pi i n_i & \text{at a zero } \alpha_i \text{ of } a, \\
f_0(\lambda)\lambda^{-1/2} + \pi i n_0 & \text{at } \lambda = 0, \\
f_{\infty}(\lambda)\lambda^{1/2} + \pi i n_{\infty} & \text{at } \lambda = \infty.
\end{cases}
$$
Here we choose small neighborhoods around the branch points such that each neighborhood contains at most one branch point. Moreover, the functions $f_{\alpha_i},f_0,f_{\infty}$ do not vanish at the corresponding branch points. If we write $\dot{g}$ for $(\del_t g)|_{t=0}$ we get
$$
(\del_t\ln\mu)|_{t=0} =
\begin{cases}
 \dot{f}_{\alpha_i}(\lambda)\sqrt{\lambda-\alpha_i} - \frac{\dot{\alpha_i}f_{\alpha_i}(\lambda)}{2\sqrt{\lambda - \alpha_i}} & \text{at a zero } \alpha_i \text{ of } a, \\
\dot{f}_0(\lambda) \lambda^{-1/2}  & \text{at } \lambda = 0, \\
\dot{f}_{\infty}(\lambda)\lambda^{1/2}  & \text{at } \lambda = \infty.
\end{cases}
$$
Since the branches of $\ln\mu$ differ from each other by an element in $2\pi i \ZZ$ we see that $\delta\ln\mu = (\del_t \ln\mu)|_{t=0}$ is a single-valued meromorphic function on $Y$ with poles at the branch points of $Y$, i.e. the poles of $\delta\ln\mu$ are located at the zeros of $a$ and at $\lambda = 0$ and $\lambda = \infty$. Thus we have
$$
\delta\ln\mu = \frac{c(\lambda)}{\nu}
$$
with a polynomial $c$ of degree at most $g+1$. Since $\eta^*\delta\ln\mu = \delta\ln\bar{\mu}$ and $\eta^*\nu = \bar{\lambda}^{-g-1}\bar{\nu}$ the polynomial $c$ obeys the reality condition
\beq
\lambda^{g+1}\overline{c(\bar{\lambda}^{-1})} = c(\lambda).
\label{reality_condition_c}
\ee
Differentiating $\nu^2 = \lambda a$ with respect to $t$ we get $2\nu\dot{\nu} = \lambda \dot{a}$. The same computation for the derivative with respect to $\lambda$ gives $2\nu\nu' = a + \lambda a'$. Now a direct calculation shows
\begin{eqnarray*}
\del^2_{t\, \lambda} \ln\mu|_{t=0} & = & \del_t \left(\frac{b}{\lambda\, \nu}\right)\bigg|_{t=0} = \frac{\dot{b}\lambda\nu - b\lambda\dot{\nu}}{\lambda^2\nu^2} = \frac{2\dot{b}a-b\dot{a}}{2\nu^3}, \\
\del^2_{\lambda\, t} \ln\mu|_{t=0} & = & \del_{\lambda}\left(\frac{c}{\nu}\right) = \frac{c'\nu - c\nu'}{\nu^2} = \frac{2c'\nu^2 - 2c\nu\nu'}{2\nu^3} = \frac{2c'\lambda a - ca -c\lambda a'}{2\nu^3}.
\end{eqnarray*}
The compatibility condition $\del^2_{t\, \lambda} \ln\mu|_{t=0} = \del^2_{\lambda\, t} \ln\mu|_{t=0}$ holds if and only if
\beq
-2\dot{b}a + b\dot{a} = -2\lambda ac' + ac + \lambda a'c.
\label{compatibility_deformations}
\ee
Equation \eqref{compatibility_deformations} is the so-called \textbf{Whitham equation}. Both sides of this equation are polynomials of degree at most $3g+1$ and therefore describe relations for $3g+2$ coefficients. If we choose a polynomial $c$ that obeys the reality condition \eqref{reality_condition_c} we obtain a vector field on $\mathcal{M}^1_g(\mathbf{p})$. Since $(a,b) \in \mathcal{M}^1_g(\mathbf{p})$ have no common roots, the polynomials $a,b,c$ in equation \eqref{compatibility_deformations} uniquely define a tangent vector $(\dot{a},\dot{b})$ (see \cite{Hauswirth_Kilian_Schmidt_2}, Section 9).
%An application of these techniques to study CMC tori in $\SS^3$ and $\HH^3$ can be found in \cite{Slawa} and \cite{matthias}. 
In the following we will specify such polynomials $c$ that lead to deformations which do not change the period $\mathbf{p}$ of $(u,u_y)$ (cf. \cite{Hauswirth_Kilian_Schmidt_2}, Section 9).\\

\noindent{\bf Preserving the period $\mathbf{p}$ along the deformation.} If we evaluate the compatibility equation \eqref{compatibility_deformations} at $\lambda = 0$ we get
$$
-2\dot{b}(0)a(0) + b(0)\dot{a}(0) = a(0)c(0).
$$
Moreover,
$$
\dot{\mathbf{p}} = 2 \frac{d}{dt}\left(\frac{b(0)}{i\sqrt{a(0)}}\right)\bigg|_{t=0} = \frac{-2\dot{b}(0)a(0) + b(0)\dot{a}(0)}{-i (a(0))^{3/2}} = i\frac{c(0)}{\sqrt{a(0)}}.
$$
This proves the following lemma.

\begin{lemma}\label{lemma_fixed_period}
Vector fields on $\mathcal{M}^1_g(\mathbf{p})$ that are induced by polynomials $c$ obeying \eqref{reality_condition_c} preserve the period $\mathbf{p}$ of $(u,u_y)$ if and only if $c(0) = 0$.
\end{lemma}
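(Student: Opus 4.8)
The plan is to differentiate the period normalization $b(0) = i\tfrac{\sqrt{a(0)}}{2}\mathbf{p}$ from Definition \ref{definition_spectral_data}(ii) along the deformation parameter $t$ and to combine the result with the single scalar constraint that the Whitham equation \eqref{compatibility_deformations} imposes at $\lambda = 0$. Since $|a(0)| = 1 \neq 0$, one may first rewrite the normalization as $\mathbf{p} = \tfrac{2}{i}\, b(0)\, a(0)^{-1/2}$, fixing a continuous branch of $\sqrt{a(0)}$ along the one‑parameter family $t \mapsto (a,b)(t)$ in $\mathcal{M}^1_g(\mathbf{p})$; this is possible because $a(0)$ never hits $0$ along the deformation.

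Differentiating with respect to $t$ and evaluating at $t=0$, using $\tfrac{d}{dt}a(0)^{-1/2} = -\tfrac12 a(0)^{-3/2}\dot a(0)$, I obtain
\[
\dot{\mathbf{p}} \;=\; \frac{2}{i}\,a(0)^{-3/2}\Bigl(\dot b(0)\,a(0) - \tfrac12 b(0)\,\dot a(0)\Bigr) \;=\; \frac{2\dot b(0)\,a(0) - b(0)\,\dot a(0)}{i\,a(0)^{3/2}}.
\]
Next I would plug $\lambda = 0$ into the Whitham equation, i.e. into $-2\dot b\,a + b\,\dot a = -2\lambda a c' + a c + \lambda a' c$: every term on the right‑hand side carries an explicit factor $\lambda$ except $a c$, so the identity collapses to $-2\dot b(0)\,a(0) + b(0)\,\dot a(0) = a(0)\,c(0)$, equivalently $2\dot b(0)\,a(0) - b(0)\,\dot a(0) = -a(0)\,c(0)$. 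Substituting this into the expression for $\dot{\mathbf{p}}$ and using $-1/i = i$ gives
\[
\dot{\mathbf{p}} \;=\; \frac{-a(0)\,c(0)}{i\,a(0)^{3/2}} \;=\; i\,\frac{c(0)}{\sqrt{a(0)}}.
\]

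Since $a(0) \neq 0$, this shows $\dot{\mathbf{p}} = 0$ if and only if $c(0) = 0$; that is, the Whitham vector field on $\mathcal{M}^1_g(\mathbf{p})$ associated to a polynomial $c$ satisfying the reality condition \eqref{reality_condition_c} is tangent to the fibre of fixed period $\mathbf{p}$ — and hence preserves $\mathbf{p}$ infinitesimally along the deformation — exactly when $c(0)=0$, which is the assertion of the lemma. There is no genuinely hard step here; the only points requiring minor care are the sign bookkeeping in the two computations and the remark that the right‑hand side of \eqref{compatibility_deformations} is divisible by $\lambda$ apart from the monomial $a c$, so that its value at $\lambda=0$ is precisely $a(0)c(0)$.
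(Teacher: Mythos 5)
Your proof is correct and follows essentially the same route as the paper: evaluate the Whitham equation \eqref{compatibility_deformations} at $\lambda=0$ to get $-2\dot b(0)a(0)+b(0)\dot a(0)=a(0)c(0)$, differentiate the normalization $b(0)=i\tfrac{\sqrt{a(0)}}{2}\mathbf{p}$ to express $\dot{\mathbf{p}}$, and combine to obtain $\dot{\mathbf{p}}=i\,c(0)/\sqrt{a(0)}$. Your sign bookkeeping matches the paper's computation exactly, and the conclusion follows since $|a(0)|=1\neq 0$.
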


Let us take a closer look at the space of polynomials $c$ that induce a Whitham deformation.

\begin{lemma}\label{lemma_coefficients_c}
For the coefficients of the polynomial $c(\lambda) = \sum_{i=0}^{g+1} c_i \lambda^i$ obeying \eqref{reality_condition_c} there holds $c_i = \bar{c}_{g+1-i}$ for $i=0,\ldots,g+1$.
\end{lemma}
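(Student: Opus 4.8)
The plan is to verify the claimed symmetry by a direct coefficient comparison in the reality condition \eqref{reality_condition_c}. First I would substitute the explicit expansion $c(\lambda) = \sum_{i=0}^{g+1} c_i \lambda^i$ into $\overline{c(\bar{\lambda}^{-1})}$. Since $\overline{\bar{\lambda}^{-i}} = \lambda^{-i}$, this gives $\overline{c(\bar{\lambda}^{-1})} = \sum_{i=0}^{g+1} \bar{c}_i \lambda^{-i}$, and hence
\[
\lambda^{g+1}\overline{c(\bar{\lambda}^{-1})} = \sum_{i=0}^{g+1} \bar{c}_i \lambda^{g+1-i}.
\]
Next I would re-index this sum by setting $j := g+1-i$, so that as $i$ runs over $\{0,\ldots,g+1\}$ so does $j$, obtaining $\lambda^{g+1}\overline{c(\bar{\lambda}^{-1})} = \sum_{j=0}^{g+1} \bar{c}_{g+1-j}\lambda^{j}$.

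Then the reality condition \eqref{reality_condition_c}, namely $\lambda^{g+1}\overline{c(\bar{\lambda}^{-1})} = c(\lambda) = \sum_{j=0}^{g+1} c_j \lambda^j$, becomes an identity between two polynomials in $\lambda$ of degree at most $g+1$; comparing the coefficient of $\lambda^j$ on both sides yields $c_j = \bar{c}_{g+1-j}$ for each $j = 0,\ldots,g+1$, which is exactly the assertion. Conversely, one checks that any tuple satisfying these relations gives a $c$ obeying \eqref{reality_condition_c}, so the relations are not only necessary but characterize such polynomials; I would mention this briefly, and also remark that when $g$ is odd the middle coefficient $c_{(g+1)/2}$ is forced to be real.

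There is essentially no obstacle here: the only point requiring a little care is bookkeeping the index reversal $i \mapsto g+1-i$ correctly (so that the powers $\lambda^{g+1-i}$ are matched with the right conjugated coefficients), and noting that the operation $c(\lambda) \mapsto \lambda^{g+1}\overline{c(\bar{\lambda}^{-1})}$ is an involution on $\CC^{g+1}[\lambda]$, which makes the equivalence transparent. The whole argument is a two-line computation, so I would keep the write-up correspondingly short.
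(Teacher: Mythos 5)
Your computation is correct: expanding $\lambda^{g+1}\overline{c(\bar{\lambda}^{-1})}=\sum_{i}\bar{c}_i\lambda^{g+1-i}$, reindexing by $j=g+1-i$, and comparing coefficients with $c(\lambda)$ is exactly the (one-line) argument the paper implicitly relies on, as it states Lemma~\ref{lemma_coefficients_c} without proof; your added remarks on the converse and the reality of the middle coefficient for odd $g$ are also correct.
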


%\begin{proof}
%Inserting $c(\lambda) = \sum_{i=0}^{g+1} c_i \lambda^i$ into \eqref{reality_condition_c} yields
%$$
%\lambda^{g+1} \sum_{i=0}^{g+1} \bar{c}_i \lambda^{-i} = \sum_{i=0}^{g+1} \bar{c}_i \lambda^{g+1-i} = \sum_{i=0}^{g+1} \bar{c}_{g+1-i} \lambda^{i} \stackrel{!}{=} \sum_{i=0}^{g+1} c_i \lambda^i.
%$$
%Equating the coefficients shows $c_i = \bar{c}_{g+1-i}$ for $i=0,\ldots,g+1$ and concludes the proof.
%\end{proof}

\begin{proposition}\label{dimension_moduli_space}
The space of polynomials c corresponding to deformations of spectral curve data $(a,b) \in \mathcal{M}^1_g(\mathbf{p})$ (with fixed period $\mathbf{p}$) is $g$-dimensional. 
\end{proposition}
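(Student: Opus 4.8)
The plan is to count parameters in the Whitham equation \eqref{compatibility_deformations}, restricted to the reality condition \eqref{reality_condition_c} and the period-preservation condition $c(0)=0$ from Lemma \ref{lemma_fixed_period}. The space of all polynomials $c$ of degree at most $g+1$ satisfying the reality condition \eqref{reality_condition_c} is, by Lemma \ref{lemma_coefficients_c}, parametrized by the coefficients $c_0,\ldots,c_{g+1}$ subject to $c_i = \bar c_{g+1-i}$; this is a real vector space of dimension $g+2$. Indeed, writing each $c_i \in \CC$ as two real numbers, the $g+2$ complex coefficients give $2(g+2)$ real parameters, and the relations $c_i = \bar c_{g+1-i}$ pair them up into $\lceil (g+2)/2\rceil$ independent complex blocks, each contributing $2$ real dimensions when $i\neq g+1-i$ and $1$ when $i=g+1-i$; a short case check (on parity of $g$) shows the total is always $g+2$.

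**Key steps.** First I would make precise that a ``polynomial $c$ corresponding to a deformation'' means exactly a $c$ satisfying \eqref{reality_condition_c} for which the right-hand side of \eqref{compatibility_deformations} lies in the image of the left-hand side map $(\dot a,\dot b)\mapsto -2\dot b a + b\dot a$; since $(a,b)$ have no common roots (as $(a,b)\in\mathcal{M}_g^1(\mathbf{p})$), this map is injective on the relevant tangent directions and, as recalled after \eqref{compatibility_deformations} citing \cite{Hauswirth_Kilian_Schmidt_2}, every $c$ obeying \eqref{reality_condition_c} does give a well-defined tangent vector $(\dot a,\dot b)$. Hence the space of admissible $c$ (before fixing the period) is the full $(g+2)$-dimensional real space just described. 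Second, I would impose the period-preservation constraint: by Lemma \ref{lemma_fixed_period} the deformation fixes $\mathbf{p}$ if and only if $c(0)=c_0=0$, equivalently (via $c_0=\bar c_{g+1}$) $c_{g+1}=0$ as well. This is a single complex linear condition $c_0=0$ on the real $(g+2)$-dimensional space, cutting it down by $2$ real dimensions, leaving $g$. Third, I would note this remaining $g$-dimensional space is nonempty and that these $c$ indeed produce genuine (tangent directions to) deformations within $\mathcal{M}_g^1(\mathbf{p})$, matching the expected dimension $\dim_{\RR}\mathcal{M}_g^1(\mathbf{p})$.

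**Main obstacle.** The routine part is the dimension count; the subtler point is verifying that after removing $c_0$ (and hence $c_{g+1}$) the constraint really removes exactly $2$ real dimensions and not more, i.e. that $c_0$ was a ``free'' complex coordinate and not already partially constrained by the reality relations. When $g$ is odd there is a middle coefficient $c_{(g+1)/2}$ forced to be real, but $c_0$ and $c_{g+1}$ are always a genuinely free conjugate pair (since $0\neq g+1$ for $g\geq 0$), so setting $c_0=0$ is a clean codimension-$2$ real condition; I would spell this out by exhibiting an explicit real basis of the $(g+2)$-dimensional space in which $\operatorname{Re}c_0$ and $\operatorname{Im}c_0$ are two of the coordinates. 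The only other thing to be careful about is the degenerate low-genus case $g=0$, where $c$ has degree at most $1$, the reality space is $2$-dimensional, $c_0=0$ forces $c\equiv 0$, and the moduli space is a point — consistent with the claimed dimension $0$.
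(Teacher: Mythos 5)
Your proposal is correct and follows essentially the same route as the paper: the space of polynomials $c$ obeying the reality condition \eqref{reality_condition_c} is real $(g+2)$-dimensional, and the period-preservation condition $c(0)=0$ from Lemma \ref{lemma_fixed_period} cuts out two real dimensions, leaving $g$. The paper states this count without elaboration, so your explicit verification that $c_0$ and $c_{g+1}=\bar c_0$ form a genuinely free conjugate pair (making $c_0=0$ a clean real codimension-$2$ condition) is a welcome, if routine, amplification rather than a deviation.
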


\begin{proof}
The space of polynomials $c$ of degree at most $g+1$ obeying the reality condition \eqref{reality_condition_c} is $(g+2)$-dimensional. From Lemma \ref{lemma_fixed_period} we know that $\dot{\mathbf{p}} = 0$ if and only if $c(0) = 0$. This yields the claim.
\end{proof}

\subsection{$\mathcal{M}_g^1(\mathbf{p})$ is a smooth $g$-dimensional manifold}

From Proposition \ref{dimension_moduli_space} we know that the space of polynomials $c$ corresponding to deformations of $\mathcal{M}^1_g(\mathbf{p})$ with fixed period $\mathbf{p}$ is $g$-dimensional. In the following we want to show that $\mathcal{M}^1_g(\mathbf{p})$ is a real $g$-dimensional manifold. Therefore, we follow the terminology introduced by Carberry and Schmidt in \cite{Carberry_Schmidt}. \\

Let us recall the conditions that characterize a representative $(a,b) \in \CC^{2g}[\lambda]\times \CC^{g+1}[\lambda]$ of an element in $\mathcal{M}_g(\mathbf{p})$:
\begin{enumerate}
\item[(i)] $\lambda^{2g}\overline{a(\bar{\lambda}^{-1})} = a(\lambda)$ and $\lambda^{-g}a(\lambda) < 0$ for all $\lambda \in \SS^1$ and $|a(0)| = 1$.
\item[(ii)] $\lambda^{g+1}\overline{b(\bar{\lambda}^{-1})} = -b(\lambda)$.
\item[(iii)] $f_i(a,b) := \int_{\alpha_i}^{1/\bar{\alpha}_i} \frac{b}{\nu} \frac{d\lambda}{\lambda} = 0$ for the roots $\alpha_i$ of $a$ in the open unit disk $\DD \subset \CC$.
\item[(iv)] The unique function $h: \widetilde{Y} \to \CC$ with $\sigma^*h = -h$ and $dh=\frac{b}{\nu} \frac{d\lambda}{\lambda}$ satisfies $h(\alpha_i) \in \pi i \ZZ$ for all roots $\alpha_i$ of $a$.
\end{enumerate}

\begin{definition}
Let $\mathcal{H}^g$ be the set of polynomials $a \in \CC^{2g}[\lambda]$ that satisfy condition (i) and whose roots are pairwise distinct.
\end{definition}

Every $a \in \mathcal{H}^g$ corresponds to a smooth spectral curve.
% , i.e. a finite-type solution without bubbletons. 
Moreover, every $a \in \mathcal{H}^g$ is uniquely determined by its roots.

\begin{definition}
For every $a\in \mathcal{H}^g$ let the space $\mathcal{B}_a$ be given by
$$
\mathcal{B}_a := \{b \in \CC^{g+1}[\lambda] \left|\right. b \text{ satisfies conditions (ii) and (iii)}\}.
$$
\end{definition}

Since (iii) imposes $g$ linearly independent constraints on the $(g+2)$-dimensional space of polynomials $b \in \CC^{g+1}[\lambda]$ obeying the reality condition (ii) we get

\begin{proposition}[\cite{Carberry_Schmidt}]\label{prop_dim_B_a}
$\dim_{\RR} \mathcal{B}_a = 2$. In particular every $b_0 \in \CC$ uniquely determines an element $b \in \mathcal{B}_a$ with $b(0) = b_0$.
\end{proposition}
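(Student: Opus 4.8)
\textit{Proof sketch.} The plan is to analyse precisely how the closing conditions (iii) cut down the space of polynomials obeying (ii). Let $V$ denote the real vector space of $b\in\CC^{g+1}[\lambda]$ satisfying the reality condition (ii); since (ii) forces $\overline{b_{g+1-i}}=-b_i$, the coefficients $b_i$ and $b_{g+1-i}$ are paired up and one gets $\dim_\RR V=g+2$, as recorded in the paragraph preceding the statement. To each $b\in V$ I would attach the meromorphic differential $\omega_b:=\tfrac{b(\lambda)}{\nu}\tfrac{d\lambda}{\lambda}$ on $Y=Y_{(a,b)}$ and, for each root $\alpha_i\in\DD$ of $a$, the homology class $c_i\in H_1(Y,\ZZ)$ of the cycle encircling the cut joining $\alpha_i$ and $1/\bar\alpha_i$ (equivalently: go from $\alpha_i$ to $1/\bar\alpha_i$ on one sheet and back on the other). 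Using $\sigma^*\omega_b=-\omega_b$ one then gets $f_i(a,b)=\tfrac12\int_{c_i}\omega_b$ for every $b\in V$.

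The first real step is to show that each functional $f_i\colon V\to\CC$ is real-linear with image a \emph{real line}, so that ``$f_i(a,b)=0$'' is a single real equation rather than two. For this I would recast (ii) as the identity $\rho^*\omega_b=-\overline{\omega_b}$, using the explicit formula $\rho\colon(\lambda,\nu)\mapsto(\bar\lambda^{-1},-\bar\lambda^{-1-g}\bar\nu)$ from Definition~\ref{definition_spectral_data}. Since $\rho$ acts on the base as the inversion $\lambda\mapsto\bar\lambda^{-1}$ fixing $\SS^1$, and since $1/\bar\alpha_i$ is a positive real multiple of $\alpha_i$, $\rho$ maps each segment $[\alpha_i,1/\bar\alpha_i]$ to itself with reversed orientation, hence preserves the whole cut system; therefore $\rho_*[c_i]=\pm[c_i]$. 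Combined with $\overline{\int_{c_i}\omega_b}=\int_{c_i}\overline{\omega_b}$ this gives $\overline{f_i(a,b)}=\mp f_i(a,b)$, so $f_i$ takes values in a real line (with the conventions of Definition~\ref{definition_spectral_data} it is purely imaginary). Consequently $\mathcal{B}_a=\bigcap_{i=1}^g\ker f_i$ has real codimension at most $g$ in $V$, so $\dim_\RR\mathcal{B}_a\ge(g+2)-g=2$.

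For the opposite inequality I would show that the real-linear evaluation map $\mathcal{B}_a\to\CC$, $b\mapsto b(0)$, is injective. Suppose $b\in\mathcal{B}_a$ with $b(0)=0$. Then (ii) forces $b_{g+1}=-\overline{b(0)}=0$, so $b(\lambda)/\lambda$ has degree $\le g-1$ and hence $\omega_b$ is a \emph{holomorphic} differential on $Y$ (one checks in local coordinates that it has no pole at $\lambda=0$, at $\lambda=\infty$, or at the other branch points). Condition (iii) now reads $\int_{c_i}\omega_b=0$ for $i=1,\dots,g$. Since the $g$ cuts $[\alpha_i,1/\bar\alpha_i]$ together with a cut joining $0$ and $\infty$ form a cut system for the genus-$g$ hyperelliptic curve $\nu^2=\lambda a(\lambda)$ (whose $2g+2$ branch points are $0$, $\infty$ and the $\alpha_i,1/\bar\alpha_i$), the classes $[c_1],\dots,[c_g]$ are the $A$-cycles of a canonical homology basis of $Y$, and a holomorphic differential with vanishing $A$-periods is zero. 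Hence $\omega_b=0$, so $b=0$. Thus $b\mapsto b(0)$ is an injective real-linear map $\mathcal{B}_a\hookrightarrow\CC$, giving $\dim_\RR\mathcal{B}_a\le2$; combined with the previous step, $\dim_\RR\mathcal{B}_a=2$ and $b\mapsto b(0)$ is a real-linear isomorphism onto $\CC$, which is exactly the ``in particular'' assertion.

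The part I expect to require the most care is the homological input in the last step: justifying that the cuts $[\alpha_i,1/\bar\alpha_i]$ can be chosen pairwise disjoint (the straight segments are automatically disjoint unless two roots of $a$ lie on a common ray through the origin, in which case one perturbs them to disjoint arcs inside $\widetilde{Y}$) and that, adjoined with the $0$--$\infty$ cut, they indeed realize $[c_1],\dots,[c_g]$ as the $A$-part of a canonical homology basis. The orientation sign in $\rho_*[c_i]=\pm[c_i]$, which decides whether $f_i$ is purely imaginary or purely real, is a minor bookkeeping issue and does not affect the dimension count.
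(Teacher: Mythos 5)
Your argument is correct, and it supplies substantially more than the paper does: the paper defers this proposition entirely to Carberry--Schmidt, offering only the one-line count ``(iii) imposes $g$ linearly independent constraints on the $(g+2)$-dimensional space of $b$ obeying (ii)'', without justifying either that each closing condition is a \emph{single real} equation or that the $g$ conditions are independent. You prove both. The lower bound via $\rho^*\omega_b=-\overline{\omega_b}$ (which is exactly the reality condition (ii) rewritten) together with $\rho$ mapping each segment $[\alpha_i,1/\bar\alpha_i]$ to itself is the right way to see that $f_i$ is valued in a real line; and the upper bound via injectivity of $b\mapsto b(0)$ on $\mathcal{B}_a$ is clean: once $b(0)=0$ forces $b_{g+1}=-\overline{b_0}=0$, the differential $\omega_b=\tfrac{b(\lambda)}{\lambda}\tfrac{d\lambda}{\nu}$ lies in the span of the holomorphic basis $\lambda^{j-1}d\lambda/\nu$ that the paper itself uses in Lemma~\ref{Lemma_ML}, and vanishing of its $A$-periods kills it. This simultaneously yields the independence of the constraints that the paper merely asserts, and the ``in particular'' clause drops out since an injective real-linear map $\mathcal{B}_a\to\CC$ between spaces of the same dimension is onto. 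The only point that genuinely needs the care you flag is the homological one: if two roots of $a$ lie on a common ray through the origin the straight segments overlap, and since $\int_{\alpha_i}^{1/\bar\alpha_i}\omega_b$ for holomorphic $\omega_b$ is path-dependent modulo periods, one should either perturb the arcs consistently with how $\widetilde{Y}=Y\setminus\bigcup\gamma_i$ is cut in Remark~\ref{equivalent_spectral_data} or note that the resulting cycles still span a maximal isotropic subspace of $H_1(Y,\ZZ)$; this is bookkeeping, not a gap.
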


% \begin{remark}
% Proposition \ref{prop_dim_B_a} shows that $d\ln\mu$ is uniquely determined by $Y$ (represented by $\nu^2 = \lambda a(\lambda)$) in case of a fixed period $\mathbf{p}$ of $(u,u_y)$.
% \end{remark}

Using the Implicit Function Theorem one obtains the following proposition.

\begin{proposition}
The set 
$$
M:= \{(a,b) \in \CC^{2g}[\lambda]\times \CC^{g+1}[\lambda] \left|\right. a \in \mathcal{H}^g, (a,b) \text{ have no common roots and } b \text{ satisfies (ii)}\}
$$
is an open subset of a $(3g+2)$-dimensional real vector space. Moreover, the set 
$$
N:= \{(a,b) \in M \left|\right. f_i(a,b) = 0 \text{ for } i=1,\ldots,g\}
$$
defines a real submanifold of $M$ of dimension $2g+2$ that is parameterized by $(a,b(0))$. If $b(0) = b_0$ is fixed we get a real submanifold of dimension $2g$.
\end{proposition}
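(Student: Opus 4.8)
The plan is to split the conditions defining $M$ into $\RR$-linear ones and open ones — which gives that $M$ is open in a $(3g+2)$-dimensional real vector space — and then to display $N$, and each slice $\{b(0)=b_0\}$, as the graph of a smooth map over an open subset of $\mathcal{H}^g\times\CC$, with Proposition~\ref{prop_dim_B_a} as the one nontrivial ingredient. For the first part: the reality conditions $\lambda^{2g}\overline{a(\bar\lambda^{-1})}=a(\lambda)$ and $\lambda^{g+1}\overline{b(\bar\lambda^{-1})}=-b(\lambda)$ are $\RR$-linear in the coefficients of $(a,b)$ and, together with the normalization $|a(0)|=1$, carve out the $(3g+2)$-dimensional real vector space $\mathcal{V}\subset\CC^{2g}[\lambda]\times\CC^{g+1}[\lambda]$ in which $M$ lies — the $a$-part contributing $2g$ real parameters (equivalently, $a\in\mathcal{H}^g$ being determined by its $g$ roots in $\DD$) and the $b$-part the $g+2$ real parameters of Proposition~\ref{prop_dim_B_a}. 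Inside $\mathcal{V}$ the remaining requirements are manifestly open: simplicity of the roots of $a$ is $\mathrm{disc}(a)\neq0$, absence of common roots of $a$ and $b$ is $\mathrm{Res}(a,b)\neq0$, and $\lambda^{-g}a(\lambda)<0$ on $\SS^1$ is open because $\SS^1$ is compact and $(\lambda,a)\mapsto\lambda^{-g}a(\lambda)$ is continuous. Hence $M$ is open in $\mathcal{V}$.

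For the submanifold structure of $N$, I would first check that $\mathcal{H}^g$ is a smooth real $2g$-manifold: the reality identity is linear, $|a(0)|=1$ is a smooth hypersurface in the resulting linear space (and it also forces the leading coefficient to have modulus $1$, so $\deg a=2g$), and root-simplicity together with the sign condition are open. Next, the period maps $f_i(a,b)=\int_{\alpha_i}^{1/\bar\alpha_i}\frac{b(\lambda)}{\nu}\frac{d\lambda}{\lambda}$, $i=1,\dots,g$, are smooth on $M$: there the roots $\alpha_i\in\DD$ of $a$ are simple, so they depend smoothly (locally holomorphically) on the coefficients of $a$, whence the integration segments $[\alpha_i,1/\bar\alpha_i]$ and a chosen branch of $\nu=\sqrt{\lambda a(\lambda)}$ over them vary smoothly, and $b\mapsto f_i(a,b)$ is linear. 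By Proposition~\ref{prop_dim_B_a} there is, for each $a\in\mathcal{H}^g$ and each $b_0\in\CC$, a unique $b=b(a,b_0)$ obeying (ii) and $f_1(a,b)=\dots=f_g(a,b)=0$ with $b(0)=b_0$; as the solution of a linear system whose coefficients depend smoothly on $a$, it depends smoothly on $(a,b_0)$. Therefore $\Psi\colon N\to\mathcal{H}^g\times\CC$, $(a,b)\mapsto(a,b(0))$, is a smooth bijection onto the open set $U=\{(a,b_0)\in\mathcal{H}^g\times\CC : a\text{ and }b(a,b_0)\text{ coprime}\}$, with smooth inverse $(a,b_0)\mapsto(a,b(a,b_0))$; since the latter is an injective immersion, $N$ is an embedded submanifold of $M$, parametrized by $(a,b(0))$, of dimension $\dim_{\RR}\mathcal{H}^g+2=2g+2$.

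Finally, in the chart $\Psi$ the map $(a,b)\mapsto b(0)$ becomes the coordinate projection $U\to\CC$, hence a submersion $N\to\CC$; its fibre over a fixed $b_0$ is $\{(a,b)\in N : b(0)=b_0\}$, which $\Psi$ identifies with an open subset of $\mathcal{H}^g\times\{b_0\}$, so it is an embedded submanifold of dimension $2g$.

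The only substantive input is Proposition~\ref{prop_dim_B_a} (from \cite{Carberry_Schmidt}): that the $g$ conditions $f_i=0$ are linearly independent on the $(g+2)$-dimensional space of $b$'s satisfying (ii), so that $\mathcal{B}_a$ is genuinely $2$-dimensional and $b\mapsto b(0)$ restricts to an isomorphism $\mathcal{B}_a\xrightarrow{\sim}\CC$. Granting this, the rest is routine; the one point calling for care is the smooth, single-valued choice of the contours $[\alpha_i,1/\bar\alpha_i]$ and of the branch of $\nu$ as $a$ ranges over $\mathcal{H}^g$ — which is exactly where the hypotheses that $a$ has simple roots, none of them on $\SS^1$, are needed, and the reason one restricts to $\mathcal{H}^g$.
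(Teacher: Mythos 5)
Your argument is correct and is essentially the fleshed-out version of the paper's one-line appeal to the Implicit Function Theorem: Proposition~\ref{prop_dim_B_a} is precisely the statement that $\partial(f_1,\dots,f_g)/\partial b$ has full rank on the $(g+2)$-dimensional space of $b$'s satisfying (ii), and your graph parameterization $(a,b_0)\mapsto(a,b(a,b_0))$ is the resulting (global) solution of $f_i=0$. The only blemish --- describing the ambient $(3g+2)$-dimensional set as a vector space despite the non-linear constraint $|a(0)|=1$ --- is inherited from the statement itself and does not affect the dimension count or the rest of the argument.
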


%\begin{proof}
%Consider the map $f =(f_1,\ldots,f_g): \RR^{2g+2}\times \RR^g \to \RR^g$ given by
%$$
%((a,b_0),(b_1,\ldots,b_{(g+1)/2})) \mapsto f(a,b) := (f_1(a,b),\ldots,f_g(a,b)).
%$$
%If we choose $b \in \mathcal{B}_a$ with $b(0) = b_0$ we get $f(a,b) = 0$ due to Proposition \ref{prop_dim_B_a}. Moreover, $f$ is linear with respect to $(b_1,\ldots,b_{(g+1)/2})$ and thus $\frac{\del(f_1,\ldots,f_g)}{\del(b_1,\ldots,b_{(g+1)/2})}$ is invertible at $(a,b)$. Now we can apply the Implicit Function Theorem and see that there exist neighborhoods $U \subset \RR^{2g+2}$ and $V \subset \RR^g$ with $(a,b_0) \in U$ and $(b_1,\ldots,b_{(g+1)/2}) \in V$ and a smooth map $g: U \to V$ with $g(a,b_0) = (b_1,\ldots,b_{(g+1)/2})$ such that 
%$$
%f((a,b_0),g(a,b_0)) = 0 \text{ for all } (a,b_0) \in U.
%$$
%Therefore $N = f^{-1}[0]$ defines a real submanifold of $M$ of dimension $2g+2$ that is parameterized by $(a,b(0))$.
%\end{proof}

The results in \cite{Hauswirth_Kilian_Schmidt_1, Hauswirth_Kilian_Schmidt_2} yield the following theorem (cf. Lemma 5.3 in \cite{Hauswirth_Kilian_Schmidt_3}).

\begin{theorem}\label{theorem_Mg}
For a fixed choice $n_1,\ldots,n_g \in \ZZ$ the map $h = (h_1,\ldots,h_g): N \to (i\RR/2\pi i \ZZ)^g \simeq (\SS^1)^g$ with
$$
h_j: N \to i\RR/2\pi i \ZZ, \;\; (a,b) \mapsto h_j(a,b):= \ln\mu(\alpha_j) - \pi i n_j
$$
is smooth and its differential $dh$ has full rank. In particular $\mathcal{M}^1_g(\mathbf{p}) = h^{-1}[0]$ defines a real submanifold of dimension $g$. Here we consider $b(0) = b_0$ as fixed, i.e. $\dim_{\RR}(N) = 2g$.
\end{theorem}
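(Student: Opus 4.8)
The plan is to obtain Theorem \ref{theorem_Mg} from the regular value theorem: it suffices to show that $h$ is a smooth map into $(i\RR/2\pi i\ZZ)^g \simeq (\SS^1)^g$ and that $dh$ has rank $g$ at every point of $h^{-1}[0]$; then $h^{-1}[0]$ is automatically a submanifold of $N$ of codimension $g$, hence of dimension $\dim_{\RR}N - g = 2g - g = g$. That $h^{-1}[0] = \mathcal{M}^1_g(\mathbf{p})$ is then precisely the reformulation of conditions (iii)--(iv) recorded in Remark \ref{equivalent_spectral_data}: for the fixed integers $n_1,\dots,n_g$, the condition $h_j(a,b) = \ln\mu(\alpha_j) - \pi i n_j \equiv 0$ in $i\RR/2\pi i\ZZ$ is exactly the closing condition $\ln\mu(\alpha_j) \in \pi i\ZZ$.

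For smoothness, note that for $a \in \mathcal{H}^g$ the $g$ roots $\alpha_1,\dots,\alpha_g$ of $a$ in $\DD$ are simple, so by the implicit function theorem they depend real-analytically on $a$. Since the differential $d\ln\mu = \frac{b(\lambda)}{\nu}\frac{d\lambda}{\lambda}$ has no pole at the branch points (by the defining property of $\Sigma_g^{\mathbf{p}}$, equivalently by condition (iii)), passing to the local coordinate $\sqrt{\lambda-\alpha_j}$ shows that $\ln\mu$ extends holomorphically across $\alpha_j$ and that $\ln\mu(\alpha_j)$ is obtained by integrating $d\ln\mu$ from a fixed base point; this integral depends real-analytically on $(a,b)\in N$. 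Finally, the reality conditions (i)--(ii) force $\ln\mu(\alpha_j)$ to lie in $i\RR$ modulo $2\pi i\ZZ$, as in \cite{Hauswirth_Kilian_Schmidt_1, Hauswirth_Kilian_Schmidt_2}, so $h$ is a well-defined smooth map.

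The core of the argument, and the main obstacle, is the full-rank claim, which says that the ``period-type'' map $h$ is a submersion. The natural approach is to compute $dh$ along deformations of the spectral curve $Y$ --- equivalently of $(a,b)$ --- inside $N$, using the local analysis of Section \ref{subsection_spectral_curve_deformation}: writing $\ln\mu = f_{\alpha_j}(\lambda)\sqrt{\lambda-\alpha_j} + C_j$ near $\alpha_j$ and differentiating, the total $t$-derivative of $\ln\mu(\alpha_j(t))$ is obtained by combining $(\partial_t\ln\mu)|_{t=0}$ with the displacement $\dot\alpha_j = -\dot a(\alpha_j)/a'(\alpha_j)$ of the branch point; the two $(\lambda-\alpha_j)^{-1/2}$ singularities cancel --- this uses $b(\alpha_j)\neq 0$, which holds because $a$ and $b$ have no common roots --- leaving an explicit finite expression for each $dh_j$ in terms of the deformation data. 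One then has to check that the resulting $g\times g$ matrix of partial derivatives is invertible; following \cite{Hauswirth_Kilian_Schmidt_1, Hauswirth_Kilian_Schmidt_2} (cf. \cite[Lemma 5.3]{Hauswirth_Kilian_Schmidt_3}), this reduces, since the $\alpha_j$ are pairwise distinct and $b(\alpha_j)\neq 0$, to the non-vanishing of a Vandermonde-type determinant in the $\alpha_j$. The delicate point throughout is exactly the bookkeeping at the branch points, where the denominator $\nu$ of $d\ln\mu$ vanishes and one must carefully separate the divergent and finite parts of the variation; once this is done, $dh$ has rank $g$ and the conclusion follows from the regular value theorem as above.
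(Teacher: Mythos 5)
Your overall framing (regular value theorem, smoothness of $h$ via analytic dependence of the simple roots $\alpha_j$ on $a$ and regularity of $d\ln\mu$ in the coordinate $\sqrt{\lambda-\alpha_j}$) is fine and matches the paper. The problem is that the one step you yourself identify as ``the core of the argument'' --- that $dh$ has rank $g$ --- is not actually proved. You propose to compute a $g\times g$ matrix of partial derivatives of the $h_j$ and assert that its invertibility ``reduces to the non-vanishing of a Vandermonde-type determinant,'' but you never specify which $g$ tangent directions of the $2g$-dimensional manifold $N$ this matrix is taken with respect to, nor do you carry out the reduction. This is not a cosmetic omission: tangent vectors to $N$ are themselves constrained (they must preserve the vanishing of the $a$-periods $f_i(a,b)=0$ and the normalization $b(0)=b_0$, with $b$ determined by $a$ up to $\mathcal{B}_a$), so one cannot freely vary $g$ coordinates and read off partial derivatives; the claimed Vandermonde structure is an unsubstantiated guess. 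As written, the proposal restates the difficulty rather than resolving it.

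The paper closes exactly this gap by a different, kernel-side argument that you do not use. It shows that $\ker dh$ coincides with the $g$-dimensional space of isoperiodic Whitham deformations: the inclusion $\{X_c : c(0)=0,\ \lambda^{g+1}\overline{c(\bar\lambda^{-1})}=c(\lambda)\}\subseteq\ker dh$ is immediate since $h$ is constant along such flows (Lemma \ref{lemma_fixed_period}, Proposition \ref{dimension_moduli_space}), and for the reverse inclusion one observes that along any curve in $N$ with $dh\cdot(\dot a,\dot b)=0$ all $a$- and $b$-periods of $\del_t(\del_\lambda\ln\mu)|_{t=0}\,d\lambda$ vanish, so this form is exact with primitive $\phi=\del_t\ln\mu|_{t=0}=c/\nu$, i.e.\ the deformation \emph{is} a Whitham deformation $X_c$. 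Rank--nullity then gives $\mathrm{rank}(dh)=2g-g=g$ with no determinant computation at all. If you want to keep your direct approach, you would need to exhibit an explicit $g$-dimensional complement to the Whitham directions in $T_{(a,b)}N$ and verify invertibility of $dh$ on it, handling the branch-point cancellations you describe; absent that, the proof is incomplete at its decisive step.
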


\begin{proof}
Let us consider an integral curve $(a(t),b(t))$ for the vector field $X_c$ that corresponds to a Whitham deformation that is induced by a polynomial $c$ obeying the reality condition \eqref{reality_condition_c}. Then there holds $h(a(t),b(t)) \equiv \text{const.}$ along this deformation and therefore
$$
dh(a(t),b(t))\cdot (\dot{a}(t),\dot{b}(t)) = dh(a(t),b(t))\cdot X_c(a(t),b(t)) = 0.
$$
From Proposition \ref{dimension_moduli_space} we know that the space of polynomials $c$ that correspond to a deformation with fixed period $\mathbf{p}$ is $g$-dimensional. We will now show that the map
\beq
c \mapsto X_c(a(t),b(t)) = (\dot{a}(t),\dot{b}(t)) \; \text{ with } \; h(a(t),b(t)) \equiv \text{const.}
\label{eq_c_deformations}
\ee
is one-to-one and onto. The first part of the claim is obvious. For the second part consider the functions
$$
f_{b_j}(a,b) := \int_{b_j} d\ln\mu = \int_{b_j} \frac{b(\lambda)}{\nu}\frac{d\lambda}{\lambda} = \ln\mu(\alpha_j) = \pi i n_j \in \pi i \ZZ
$$
along $(a(t),b(t))$, where the $b_j$ are the $b$-cycles of $Y$. Taking the derivative yields 
$$
\frac{d}{dt} f_{b_j}(a,b)\big|_{t=0} = \int_{b_j} \frac{d}{dt}\left(\frac{b(\lambda)}{\nu}\right)\bigg|_{t=0} \frac{d\lambda}{\lambda} = \int_{b_j} \del_t (\del_{\lambda}\ln \mu) |_{t=0} \, d\lambda = 0= 0.
$$
Morover, for the $a$-cycles $a_j$ we have 
$$
f_{a_j}(a,b) = f_j(a,b) = \int_{a_j} d\ln\mu = \int_{a_j} \frac{b(\lambda)}{\nu}\frac{d\lambda}{\lambda} = 0
$$
and consequently
$$
\frac{d}{dt} f_{a_j}(a,b)\big|_{t=0} = \int_{a_j} \frac{d}{dt}\left(\frac{b(\lambda)}{\nu}\right)\bigg|_{t=0} \frac{d\lambda}{\lambda} = \int_{a_j} \del_t (\del_{\lambda}\ln \mu) |_{t=0} \, d\lambda = 0.
$$
Since all integrals of $\del_t (\del_{\lambda}\ln \mu) |_{t=0}$ vanish, there exists a meromorphic function $\phi$ with 
$$
d\phi = \del_t (\del_{\lambda}\ln \mu) |_{t=0} \,d\lambda.
$$
Due to the Whitham equation \eqref{compatibility_deformations} this function is given by $\phi = (\del_t\ln \mu)|_{t=0}= \frac{c}{\nu}$. Thus the map in \eqref{eq_c_deformations} is bijective. This shows $\dim (\ker dh) = g$ and consequently $\dim (\text{im}\,dh) = g$ as well. Therefore $dh:\RR^{2g} \to \RR^g$ has full rank and the claim follows.
\end{proof}

\section{The phase space $(M_g^{\mathbf{p}},\Omega)$}

In the following we will define the phase space of our integrable system. We need some preparation and first recall the generalized Weierstrass representation \cite{DPW}. Set
$$
\Lambda_{-1}^{\infty}\mathfrak{sl}_2(\CC) = \{ \xi_{\lambda} \in \mathcal{O}(\CC^*,\mathfrak{sl}_2(\CC)) \left|\right. (\lambda \xi_{\lambda})_{\lambda = 0} \in \CC^* \epsilon_+\}.
$$
A \textbf{potential} is a holomorphic $1$-form $\xi_{\lambda}dz$ on $\CC$ with $\xi_{\lambda} \in \Lambda_{-1}^{\infty}\mathfrak{sl}_2(\CC)$. Given such a potential one can solve the holomorphic ODE $d\phi_{\lambda} = \phi_{\lambda}\xi_{\lambda}$ to obtain a map $\phi_{\lambda}: \CC \to \Lambda_r SL(2,\CC)$. Then Theorem \ref{theorem_iwasawa} yields an extended frame $F_{\lambda}: \CC \to \Lambda_r SU(2)$ via the $r$-Iwasawa decomposition
$$
\phi_{\lambda} = F_{\lambda} B_{\lambda}.
$$
It is proven in \cite{DPW} that each extended frame can be obtained from a potential $\xi_{\lambda} dz$ by the Iwasawa decomposition. Note, that we have the inclusions
$$
\mathcal{P}_g \subset \Lambda_{-1}^{\infty}\mathfrak{sl}_2(\CC) \subset \Lambda_r \mathfrak{sl}_2(\CC).
$$
An extended frame $F_{\lambda}: \CC \to \Lambda_r SU(2)$ is of \textbf{finite type}, if there exists $g \in \NN$ such that the corresponding potential $\xi_{\lambda}dz$ satisfies $\xi_{\lambda} \in \mathcal{P}_g \subset \Lambda_{-1}^{\infty}\mathfrak{sl}_2(\CC)$. We say that a polynomial Killing field has minimal degree if and only if it has neither roots nor poles in $\lambda \in \CC^*$. We will need the following proposition that summarizes two results by Burstall-Pedit \cite{Burstall_Pedit_1, Burstall_Pedit_2}.

\begin{proposition}[\cite{Hauswirth_Kilian_Schmidt_1}, Proposition 4.5]\label{proposition_minimal_degree}
For an extended frame of finite type there exists a unique polynomial Killing field of minimal degree. There is a smooth 1:1 correspondence between the set of extended frames of finite type and the set of polynomial Killing fields without zeros.
\end{proposition}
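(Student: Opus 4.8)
The plan is to establish the two assertions together, deriving the $1{:}1$ correspondence from the existence and uniqueness of a minimal polynomial Killing field.

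\emph{Existence of a minimal polynomial Killing field.} Let $F_\lambda$ be an extended frame of finite type. By definition the associated potential $\xi_\lambda\,dz$ has $\xi_\lambda\in\mathcal{P}_g$ for some $g\in\NN_0$, and, as recalled before Definition~\ref{killing1}, $\zeta_\lambda(x):=F_\lambda^{-1}(x)\,\xi_\lambda\,F_\lambda(x)$ is then a polynomial Killing field with values in $\mathcal{P}_g$: it solves $\tfrac{d}{dx}\zeta_\lambda=[\zeta_\lambda,U_\lambda(\cdot,0)]$, it inherits the reality condition $\lambda^{g-1}\overline{\zeta_{1/\bar\lambda}}^{\,t}=-\zeta_\lambda$ because $F_\lambda$ takes values in $\Lambda_r SU(2)$, and its $\lambda^{-1}$--coefficient equals $\tfrac{i}{2}e^{u(x)}\epsilon_+$. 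Suppose $\zeta_{\lambda_0}\equiv 0$ for some $\lambda_0\in\CC^*$. If $|\lambda_0|\neq 1$, the reality condition forces $\zeta_{1/\bar\lambda_0}\equiv 0$ as well, and the $\CC(\lambda)$--linearity of the Lax ODE shows that $\zeta_\lambda\big/\big((\lambda-\lambda_0)(\lambda-1/\bar\lambda_0)\big)$, rescaled by a unit-modulus constant to restore the normalisation defining $\mathcal{P}_{g-2}$, is again a polynomial Killing field; if $|\lambda_0|=1$ one divides instead by $(\lambda-\lambda_0)$ and lands in $\mathcal{P}_{g-1}$. Iterating this reduction must terminate, since the spectral genus strictly decreases and is non-negative, and the outcome is a polynomial Killing field without zeros in $\CC^*$. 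As elements of the various $\mathcal{P}_{g'}$ have poles only at $\lambda=0$ and $\lambda=\infty$, such a Killing field has neither zeros nor poles in $\CC^*$, i.e.\ it is of minimal degree.

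\emph{The correspondence.} From a polynomial Killing field $\zeta_\lambda$ without zeros one reads off the matrix form $U(\zeta_\lambda)$ defined earlier, hence the Cauchy data $(u,u_y)\simeq U_\lambda(\cdot,0)$; solving the $\sinh$-Gordon equation with these data (globally, by the finite type assumption), integrating $d\phi_\lambda=\phi_\lambda\alpha_\lambda$, and applying the $r$-Iwasawa decomposition of Theorem~\ref{theorem_iwasawa} produces an extended frame $F_\lambda$, which is of finite type because $\zeta_\lambda(0)\in\mathcal{P}_g\subset\Lambda_{-1}^{\infty}\mathfrak{sl}_2(\CC)$ furnishes a finite-type potential. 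Conversely, to a finite-type frame we attach its minimal polynomial Killing field constructed above. These maps are mutually inverse: passing from $\zeta_\lambda$ to $U_\lambda(\cdot,0)$ and back recovers $\zeta_\lambda(x)=F_\lambda^{-1}(x)\zeta_\lambda(0)F_\lambda(x)$ by uniqueness of solutions of the Lax ODE, and $\zeta_\lambda$ is automatically of minimal degree as it has no zeros; conversely, running the construction starting from $F_\lambda$ and then reading off $U(\zeta_\lambda)$ returns $U_\lambda(\cdot,0)$. Both directions amount to solving linear (ordinary or partial) differential equations and performing the Iwasawa splitting, all of which depend smoothly on the data in the relevant Fr\'echet topologies, so the correspondence is smooth.

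\emph{Uniqueness, and the main obstacle.} What remains --- and is the crux --- is that the minimal polynomial Killing field of a fixed frame is unique; this is the Burstall--Pedit ingredient. The argument I would carry out: the set of all polynomial Killing fields of $F_\lambda$ is a commutative $\CC[\lambda]$--algebra, since products and $\CC[\lambda]$--multiples of solutions of $\tfrac{d}{dx}\zeta=[\zeta,U_\lambda(\cdot,0)]$ are again solutions and $\unity$ is one, and by Cayley--Hamilton $\lambda\,\zeta_\lambda(x)^2=-\lambda\det(\zeta_\lambda(x))\,\unity=a(\lambda)\,\unity$ with $a$ independent of $x$. One then shows that, for a minimal $\zeta_\lambda$, this algebra is generated over $\CC[\lambda]$ by $\unity$ and $\zeta_\lambda$; consequently a second minimal candidate $\zeta'_\lambda$ satisfies $\zeta'_\lambda=p(\lambda)\zeta_\lambda+q(\lambda)\unity$ for some $p,q\in\CC[\lambda]$, and tracelessness forces $q=0$, equality of degrees forces $p$ to be constant, and the coincidence of the $\lambda^{-1}$--coefficients $\tfrac{i}{2}e^{u(0)}\epsilon_+$ of $\zeta_\lambda$ and $\zeta'_\lambda$ forces $p=1$. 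Alternatively one may invoke the bijectivity of the inverse problem recalled earlier ($(Y,D)$ determines $\xi_\lambda$ and hence $\zeta_\lambda$): a minimal Killing field has a spectral curve free of removable branch points and the divisor $D(u,u_y)$ of the underlying solution $u$, so two minimal candidates of the same frame carry identical spectral data and must coincide. Either way, the step requiring genuine care is the generation statement --- equivalently, ruling out polynomial Killing fields that are not polynomials in a single minimal one --- which is precisely where the finite-type hypothesis and Burstall--Pedit's analysis are essential.
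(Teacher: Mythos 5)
The paper does not actually prove this proposition: it is imported verbatim as Proposition~4.5 of Hauswirth--Kilian--Schmidt, which in turn summarizes results of Burstall--Pedit, so there is no in-paper argument to measure your proposal against. Judged on its own, your sketch follows the standard route for this result and is sound in outline: existence of a root-free (hence minimal-degree) Killing field by successively dividing out common roots of $\zeta_\lambda$, and the correspondence by passing between $\zeta_\lambda$, the Cauchy data $(u,u_y)\simeq U(\zeta_\lambda)$, and the frame via the potential and the $r$-Iwasawa splitting. Two points deserve tightening. First, your treatment of a root $\lambda_0\in\SS^1$ is imprecise: dividing by a single factor $(\lambda-\lambda_0)$ and rescaling by a unimodular constant cannot in general restore both the reality condition $\lambda^{g-2}\overline{\xi_{1/\bar\lambda}}^{\,t}=-\xi_\lambda$ and the normalization $\widehat{\xi}_{-1}\in i\RR^+\epsilon_+$ simultaneously (a short computation shows the two constraints on the rescaling constant are compatible only for $\lambda_0=-1$); the correct statement, which is the one in Hauswirth--Kilian--Schmidt, is that roots on the unit circle have even order and one divides by the quadratic factor $(\lambda-\lambda_0)(1-\bar\lambda_0\lambda)/\lambda$ there as well, dropping the degree by two.

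Second, the step you flag as the crux of uniqueness --- that the algebra of polynomial Killing fields of a fixed frame is generated over $\CC[\lambda]$ by $\unity$ and one minimal element --- is a genuine gap as written, but it closes more easily than you suggest in this $2\times2$ setting: periodicity of $\zeta_\lambda$ gives $[\xi_\lambda,M_\lambda]=0$, and for generic $\lambda$ the monodromy $M_\lambda$ is regular semisimple (its eigenvalues $\mu,\mu^{-1}$ are distinct), so its centralizer is the commutative algebra of matrices diagonal in its eigenbasis. Hence any two traceless polynomial Killing fields satisfy $\zeta'_\lambda=h(\lambda)\zeta_\lambda$ with $h=\trace(\zeta'_\lambda\zeta_\lambda)/\trace(\zeta_\lambda^2)$ rational in $\lambda$; if both are root- and pole-free on $\CC^*$ then $h=c\lambda^k$, and the reality condition together with the requirement that both $\lambda^{-1}$-coefficients equal $\tfrac{i}{2}e^{u(0)}\epsilon_+$ (forced by $U(\zeta_\lambda)=U_\lambda(\cdot,0)$) pins down $k=0$ and $c=1$. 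With that supplement, and granting the asserted smoothness of the ODE solutions and of the Iwasawa splitting in the Fr\'echet topology (which you state but do not verify), your argument is a faithful reconstruction of the cited proof.
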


Consider the map $A: \xi_{\lambda} \mapsto A(\xi_{\lambda}) := -\lambda \det \xi_{\lambda}$ (see \cite{Hauswirth_Kilian_Schmidt_1}) and set $\mathcal{P}_g^1(\mathbf{p}) := A^{-1}[\mathcal{M}_g^1(\mathbf{p})]$. Moreover, denote by $C_{\mathbf{p}}^{\infty} := C^{\infty}(\RR/\mathbf{p}\ZZ)$ the Frechet space of real infinitely differentiable functions of period $\mathbf{p} \in \RR^+$.  The above discussion yields an injective map 
$$
\phi: \mathcal{P}_g^1(\mathbf{p}) \subset \Lambda_{-1}^{\infty}\mathfrak{sl}_2(\CC) \to \phi[\mathcal{P}_g^1(\mathbf{p})] \subset C_{\mathbf{p}}^{\infty} \times C_{\mathbf{p}}^{\infty},\; \xi_{\lambda} \mapsto (u(\xi_{\lambda}), u_y(\xi_{\lambda})).
$$

\begin{definition}
Let $M_g^{\mathbf{p}}$ denote the space of $(u,u_y) \in C_{\mathbf{p}}^{\infty} \times C_{\mathbf{p}}^{\infty}$ (with fixed period $\mathbf{p}$) such that $(u,u_y)$ is of finite type in the sense of Def. \ref{killing1}, where $\Phi_{\lambda}$ is of fixed degree $g \in \NN_0$, and $\zeta_{\lambda}(0) \in \mathcal{P}_g^1(\mathbf{p})$ with $\zeta_{\lambda} = \Phi_{\lambda} - \lambda^{g-1}\overline{\Phi_{1/\bar{\lambda}}}^t$, i.e. $M_g^{\mathbf{p}} := \phi[\mathcal{P}_g^1(\mathbf{p})]$.
\end{definition}

Now we are able to prove the following lemma.

\begin{lemma}\label{lemma_phi_embedding}
The map $\phi: \mathcal{P}_g^1(\mathbf{p}) \to M_g^{\mathbf{p}},\; \xi_{\lambda} \mapsto (u(\xi_{\lambda}), u_y(\xi_{\lambda}))$ is an embedding.
\end{lemma}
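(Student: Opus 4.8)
## Proof proposal

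The plan is to show that $\phi$ is an injective immersion with closed image (or, since we are working with Frechet manifolds, simply an injective immersion that is a homeomorphism onto its image), so that it qualifies as an embedding. Injectivity has essentially already been established: the map $\phi$ was introduced as an injective map, because a potential $\xi_\lambda \in \mathcal{P}_g^1(\mathbf{p})$ determines a polynomial Killing field $\zeta_\lambda$ via $\zeta_\lambda(x) = F_\lambda^{-1}(x)\xi_\lambda F_\lambda(x)$, and conversely $U_\lambda(\cdot,0)$ — equivalently the pair $(u,u_y)$ by Remark \ref{isomorphism_U_lambda} — is recovered from $\zeta_\lambda$ via the form $U(\zeta_\lambda)$. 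So the first step is simply to recall this chain of reconstructions and note that it gives a well-defined inverse $\phi^{-1}$ on $M_g^{\mathbf{p}} = \phi[\mathcal{P}_g^1(\mathbf{p})]$.

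The substantive step is to prove that $d\phi$ is injective at every $\xi_\lambda \in \mathcal{P}_g^1(\mathbf{p})$, i.e. that $\phi$ is an immersion. Here I would argue via the reconstruction map in the reverse direction: the assignment $\xi_\lambda \mapsto \zeta_\lambda \mapsto U(\zeta_\lambda) \mapsto (u,u_y)$ is smooth, and since it recovers $\xi_\lambda$ back from $(u,u_y)$ (through the 1:1 correspondence in Proposition \ref{proposition_minimal_degree} together with the spectral-data bijection recalled at the end of Section 5 — the pair $(Y,D)$ hence $\xi_\lambda$ and $\zeta_\lambda$ are uniquely determined by $(u,u_y)$), the composition $\phi^{-1}\circ\phi$ is the identity and is smooth. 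Differentiating, $d(\phi^{-1})\circ d\phi = \mathrm{id}$, which forces $d\phi$ to be injective (and in fact split). Concretely: an element of $T_{\xi_\lambda}\mathcal{P}_g^1(\mathbf{p})$ in the kernel of $d\phi$ would produce a variation of $\zeta_\lambda$, hence of $U(\zeta_\lambda)$, that vanishes, hence a variation of $(u,u_y)$ that vanishes; but the reconstruction recovers the original $\xi_\lambda$ uniquely, so the variation of $\xi_\lambda$ must itself vanish.

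Finally I would address the topological condition: $\phi$ should be a homeomorphism onto its image $M_g^{\mathbf{p}}$ equipped with the subspace topology from $C_{\mathbf{p}}^\infty \times C_{\mathbf{p}}^\infty$. This follows because the inverse $(u,u_y)\mapsto\xi_\lambda$ is continuous: solving $dF_\lambda = F_\lambda U_\lambda(\cdot,0)$ depends continuously on $U_\lambda(\cdot,0)$, and the passage through the Iwasawa decomposition (Theorem \ref{theorem_iwasawa}) and the minimal-degree polynomial Killing field (Proposition \ref{proposition_minimal_degree}) is continuous — indeed real analytic — by the cited results. Since $M_g^{\mathbf{p}}$ is by definition exactly the image $\phi[\mathcal{P}_g^1(\mathbf{p})]$, and $\mathcal{P}_g^1(\mathbf{p})$ is a finite-dimensional manifold, continuity of $\phi$ and of its inverse gives the homeomorphism, and together with the immersion property the embedding.

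The main obstacle is the immersion step: one must be careful that the reconstruction $\xi_\lambda \leftrightarrow (u,u_y)$ is genuinely smooth and genuinely inverts $\phi$ on the relevant spaces — in particular that the polynomial Killing field associated to $(u,u_y)$ via Proposition \ref{proposition_minimal_degree} has initial value landing back in $\mathcal{P}_g^1(\mathbf{p})$ (the non-degeneracy and "no common roots" conditions built into $\mathcal{M}_g^1(\mathbf{p})$ are what guarantee this) and that the degree $g$ is preserved. Once the two-sided inverse is in place at the level of smooth maps, injectivity of $d\phi$ is automatic, so the real work is bookkeeping about which normalization of the Killing field is being used on each side.
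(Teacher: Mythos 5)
There is a genuine gap in the step that carries all the weight: the continuity of the inverse map $(u,u_y)\mapsto\xi_{\lambda}$. You justify it by saying that the solution of $dF_{\lambda}=F_{\lambda}U_{\lambda}(\cdot,0)$ depends continuously on $U_{\lambda}(\cdot,0)$ and that the Iwasawa decomposition and the passage to the minimal-degree polynomial Killing field are real analytic. But continuous dependence of the ODE only hands you the extended frame $F_{\lambda}$ from the Cauchy data; it does not produce $\xi_{\lambda}$. To extract $\xi_{\lambda}$ from $(u,u_y)$ one has to identify the finite linear combination of the higher flows that is stationary, i.e.\ the coefficients $c_0,\ldots,c_{g-1}$ in $(\omega_g,\del_y\omega_g)=\sum_{i=0}^{g-1}c_i(\omega_i,\del_y\omega_i)$, where the $\omega_i$ are the Pinkall--Sterling Jacobi fields (differential polynomials in $u$ and $u_y$). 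The whole difficulty is to show that these $c_i$ depend continuously on $(u,u_y)$ in the topology of $C^{\infty}_{\mathbf{p}}\times C^{\infty}_{\mathbf{p}}$. The paper does this by proving that the $(\omega_i,\del_y\omega_i)$, $i=0,\ldots,g-1$, together with their derivatives up to order $2g+1$, remain linearly independent after restriction to the real axis --- this requires the analyticity of finite type solutions (elliptic regularity for the $\sinh$-Gordon equation) and a connectedness/unique continuation argument --- and then solving for the $c_i$ by an invertible $g\times g$ linear system whose entries depend continuously on $(u,u_y)$. None of this is visible in your sketch, and it is not supplied by Proposition \ref{proposition_minimal_degree} or Theorem \ref{theorem_iwasawa}, which concern the correspondence between frames and Killing fields, not the continuity of the map out of the Fr\'echet space of Cauchy data.

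A second, related problem is that your immersion step is circular: you deduce injectivity of $d\phi$ from $d(\phi^{-1})\circ d\phi=\mathrm{id}$, which presupposes that $\phi^{-1}$ is differentiable --- exactly the point at issue. The paper sidesteps this entirely: its proof establishes only that $\phi$ is a bijection whose inverse is continuous (with respect to the supremum norm of the first $2g+1$ derivatives), i.e.\ a topological embedding, and the manifold structure on $M_g^{\mathbf{p}}$ is then transported from $\mathcal{P}_g^1(\mathbf{p})$. If you want to keep the immersion language, you must first prove the continuity of the inverse by the Jacobi-field argument above rather than derive it from the conclusion.
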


\begin{proof}
From the previous discussion we know that $\phi: \mathcal{P}_g^1(\mathbf{p}) \to M_g^{\mathbf{p}}$ is bijective. We show that $\phi^{-1}: M_g^{\mathbf{p}} \to \mathcal{P}_g^1(\mathbf{p})$ is continuous. Assume that $g$ is the minimal degree for $\xi_{\lambda} \in \mathcal{P}^1_g(\mathbf{p})$ (see Proposition \ref{proposition_minimal_degree}). Then the Jacobi fields 
$$
(\omega_0,\del_y \omega_0),\ldots,(\omega_{g-1},\del_y\omega_{g-1}) \in C^{\infty}(\CC/\mathbf{p}\ZZ) \times C^{\infty}(\CC/\mathbf{p}\ZZ)
$$ 
are linearly independent over $\CC$ with all their derivatives up to order $2g+1$. We will now show that they stay linearly independent if we restrict them to $\RR$. For this, suppose that they are linearly dependent on $\RR$ with all their derivatives up to order $2g+1$. Since $u$ solves the elliptic $\sinh$-Gordon equation with analytic coefficients $u$ is analytic on $\CC$ \cite{Lewy}. Thus the $(\omega_i,\del_y \omega_i)$ are analytic as well since they only depend on $u$ and its $k$-th derivatives with $k \leq 2i +1 \leq 2g+1$ (see \cite{Pinkall_Sterling}, Proposition 3.1). Thus they stay linearly dependent on an open neighborhood and the subset $M \subset \CC$ of points such that these functions are linearly dependent is open and closed. Therefore $M = \CC$, a contradiction! \\

By considering all derivatives of $(u,u_y)$ up to order $2g+1$ we get a small open neighborhood $U$ of $(u,u_y) \in C^{\infty}(\RR/\mathbf{p}\ZZ) \times C^{\infty}(\RR/\mathbf{p}\ZZ)$ such that the functions
$$
(\widetilde{\omega}_0,\del_y \widetilde{\omega}_0),\ldots,(\widetilde{\omega}_{g-1},\del_y\widetilde{\omega}_{g-1}) \in C^{\infty}(\RR/\mathbf{p}\ZZ) \times C^{\infty}(\RR/\mathbf{p}\ZZ)
$$
remain linearly independent for $(\widetilde{u},\widetilde{u}_y) \in U$. Given $(u,u_y) \in U$ there exist numbers $a_0,\ldots,a_{g-1}$ such that the $g$ vectors
$$
((\omega_0(a_j),\del_y \omega_0(a_j)),\ldots,(\omega_{g-1}(a_j),\del_y\omega_{g-1}(a_j)))^t 
$$
are linearly independent. Recall that $(\omega_g,\del_y \omega_g) = \sum_{i=0}^{g-1} c_i (\omega_i,\del_y \omega_i)$ in the finite type situation, which assures the existence of a polynomial Killing field. Inserting these $a_0,\ldots,a_{g-1}$ into the equation $(\omega_g,\del_y \omega_g) = \sum_{i=0}^{g-1} c_i (\omega_i,\del_y \omega_i)$ we obtain an invertible $g\times g$ matrix and can calculate the $c_i$. This shows that the coefficients $c_i$ continuously depend on $(u,u_y) \in U$. Thus for $(u,u_y) \in M_g^{\mathbf{p}}$ and $\varepsilon > 0$ there exists a $\delta_{\varepsilon} > 0$ such that
$$
\|\xi_{\lambda}(u,u_y) - \xi_{\lambda}(\widetilde{u},\widetilde{u}_y)\| < \varepsilon
$$
holds for all Cauchy data $(\widetilde{u},\widetilde{u}_y) \in M_g^{\mathbf{p}}$ with $\|(u,u_y) - (\widetilde{u},\widetilde{u}_y)\| < \delta_{\varepsilon}$, where the norm is given by the supremum of the first $2g+1$ derivatives.
\end{proof}

Let us study the map
$$
Y: M_g^{\mathbf{p}} \to \Sigma_g^{\mathbf{p}} \simeq \mathcal{M}^1_g(\mathbf{p}), \;\; (u,u_y) \mapsto Y(u,u_y)
$$
that appears in the diagram
\[
\xymatrix{
\mathcal{P}_g^1(\mathbf{p}) \ar[d]_{\phi} \ar[dr]^A
& \\
M_g^{\mathbf{p}} \ar[r]^{Y} & \Sigma_g^{\mathbf{p}}}
\]
% Here the map $A: \mathcal{P}_g^1(\mathbf{p}) \to \Sigma_g^{\mathbf{p}} \simeq \mathcal{M}^1_g(\mathbf{p})$ is given by $\xi_{\lambda} \mapsto A(\xi_{\lambda}) := -\lambda \det \xi_{\lambda}$ (see \cite{Hauswirth_Kilian_Schmidt_1}).

\begin{proposition}\label{proposition_fiber_bundle}
The map $Y: M_g^{\mathbf{p}} \to \Sigma_g^{\mathbf{p}} \simeq \mathcal{M}^1_g(\mathbf{p}), \; (u,u_y) \mapsto Y(u,u_y)$ is a principle bundle with fiber $\text{Iso}(Y(u,u_y)) \simeq \text{Pic}_{g+1}^{\RR}(Y(u,u_y)) \simeq (\SS^1)^g$. In particular $M_g^{\mathbf{p}}$ is a manifold of dimension $2g$.
\end{proposition}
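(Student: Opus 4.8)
The plan is to realise $Y$ as the orbit map of a smooth, fibre-preserving $\RR^g$-action on $M_g^{\mathbf{p}}$ that is transitive on every fibre with discrete stabilisers, and then to trivialise it locally. First I would pin down the fibres. Fix $Y_0\in\Sigma_g^{\mathbf{p}}$. The bijective correspondence between Cauchy data $(u,u_y)$ and spectral data $(Y,D)$ (the Inverse Problem, cf. \cite{Hitchin_Harmonic}) identifies $Y^{-1}(Y_0)$ with the set of admissible divisors of degree $g+1$ on $Y_0$, that is, with $\text{Pic}_{g+1}^{\RR}(Y_0)$ of Definition \ref{def_quaternionic_divisor}. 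Now $\RR^g\cong H^1_{\RR}(Y_0,\mathcal{O})$ acts on $\text{Pic}_{g+1}^{\RR}(Y_0)$ continuously, commutatively and transitively by Krichever's construction (cf. \cite[Theorem 5.17]{Knopf_phd}); the stabiliser of every divisor equals $\ker L$, where $L\colon\RR^g\cong H^1_{\RR}(Y_0,\mathcal{O})\to\text{Pic}_0^{\RR}(Y_0)$ is the map from \eqref{eq_sequence}. Since $\text{Pic}_{g+1}^{\RR}(Y_0)$ is compact, connected and of real dimension $g$, the subgroup $\ker L$ is a full lattice $\Lambda(Y_0)\cong\ZZ^g$, and hence $Y^{-1}(Y_0)\cong\RR^g/\Lambda(Y_0)\cong(\SS^1)^g$.

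Next I would lift this to a global action. For $t=(t_0,\dots,t_{g-1})\in\RR^g$ the commuting isospectral vector fields $\delta U_\lambda$ of Theorem \ref{theorem_vf_isospectral_U} integrate, starting from $(u,u_y)\in M_g^{\mathbf{p}}$, to the Cauchy data whose eigenline bundle is $\pi(t)\big(E(u,u_y)\big)$, while the spectral curve stays fixed. These vector fields depend smoothly on $(u,u_y)$ — this uses smoothness of the $r$-Iwasawa splitting (Theorem \ref{theorem_iwasawa}) and of the projectors $P_x$ — and each orbit lies in a compact fibre, so the flow is complete. This yields a smooth action $\RR^g\times M_g^{\mathbf{p}}\to M_g^{\mathbf{p}}$ covering the identity on $\Sigma_g^{\mathbf{p}}$, which on each fibre is the transitive action of the first step.

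The remaining, and I expect main, point is local triviality, i.e. that $Y$ admits smooth local sections. Here I would use the non-isospectral (Whitham) deformations of Section \ref{subsection_spectral_curve_deformation}: by Proposition \ref{dimension_moduli_space} the tangent space of $\mathcal{M}^1_g(\mathbf{p})$ is, at each point, parametrised by the $g$-dimensional space of polynomials $c$ with $c(0)=0$ satisfying \eqref{reality_condition_c}, and Lemma \ref{general_variation_U} (applied with $\delta\ln\mu=c/\nu$) lifts each such $c$ to a deformation $\delta U_\lambda$ of the corresponding $(u,u_y)$ projecting onto the given Whitham vector field. This shows $Y$ is a submersion; since its fibres are compact it is a proper submersion, so Ehresmann's theorem already gives local triviality, but — more in the spirit of the preceding constructions — one simply fixes a preimage $s(Y_0)$ of $Y_0$ and integrates the lifted vector fields over a chart $V\subset\mathcal{M}^1_g(\mathbf{p})$ around $Y_0$ to obtain a smooth section $s\colon V\to M_g^{\mathbf{p}}$.

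Finally, choosing a smooth frame for the lattice bundle $\Lambda(\cdot)$ over $V$ (possible since $\Lambda$ varies continuously and discretely), the map $V\times(\SS^1)^g\to Y^{-1}(V)$, $(Y_1,[t])\mapsto t\cdot s(Y_1)$, is a diffeomorphism by completeness and fibrewise transitivity of the action together with the description of the stabilisers. Comparing two such trivialisations, the transition functions are left translations by $(\SS^1)^g$-valued maps, so $Y$ is a principal $(\SS^1)^g$-bundle. In particular $M_g^{\mathbf{p}}$ acquires the structure of a smooth manifold of dimension $\dim_{\RR}\mathcal{M}^1_g(\mathbf{p})+\dim(\SS^1)^g=g+g=2g$, compatibly via Lemma \ref{lemma_phi_embedding} with $\mathcal{P}_g^1(\mathbf{p})$, which is thereby a $2g$-dimensional manifold as well.
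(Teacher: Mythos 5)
Your argument is correct in outline, but it takes a genuinely different -- and much longer -- route than the paper. The paper's proof is a two-step transport of structure: it quotes Proposition 4.12 of \cite{Hauswirth_Kilian_Schmidt_1}, which already asserts that $A: \mathcal{P}_g^1(\mathbf{p}) \to \mathcal{M}_g^1(\mathbf{p})$, $\xi_{\lambda} \mapsto -\lambda\det(\xi_{\lambda})$ is a principal $(\SS^1)^g$-bundle over the $g$-dimensional manifold $\mathcal{M}_g^1(\mathbf{p})$ of Theorem \ref{theorem_Mg}, and then uses the commutative triangle $Y \circ \phi = A$ together with Lemma \ref{lemma_phi_embedding} (the embedding $\phi: \mathcal{P}_g^1(\mathbf{p}) \to M_g^{\mathbf{p}}$) to push the bundle structure and the dimension count onto $M_g^{\mathbf{p}}$. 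You instead rebuild the bundle from scratch on the Cauchy-data side: fibre identification via the Inverse Problem, torus structure of the fibres via the Krichever action and its lattice stabiliser, the global $\RR^g$-action by integrating the isospectral fields of Theorem \ref{theorem_vf_isospectral_U}, and local sections by integrating Whitham lifts. What your route buys is a self-contained, geometrically transparent proof that makes the fibre $\mathrm{Pic}_{g+1}^{\RR}(Y_0) \simeq (\SS^1)^g$ visible; what the paper's route buys is brevity and the avoidance of several analytic verifications (completeness and smoothness of the flows, integrability of the lifted Whitham distribution, smooth dependence of the Iwasawa factors) that are exactly the content of the cited external results.

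Two caveats on your version. First, the appeal to Ehresmann's theorem is mildly circular: properness and submersivity of $Y$ presuppose a smooth structure on $M_g^{\mathbf{p}}$, which is part of what is being established; your alternative (constructing sections by integrating the lifted vector fields and taking the charts $V \times (\SS^1)^g$ as the definition of the smooth structure) is the one that actually works, and you should then \emph{check} that the lifted Whitham fields span an integrable distribution transverse to the fibres -- this is true but not automatic. Second, the completeness of the $\RR^g$-flow requires the fibres to be compact \emph{in the topology of} $M_g^{\mathbf{p}}$, i.e. the continuity of the inverse spectral map $\mathrm{Pic}_{g+1}^{\RR}(Y_0) \to M_g^{\mathbf{p}}$; in the paper this is subsumed in the quoted bundle structure of $A$ and in Lemma \ref{lemma_phi_embedding}, so you should cite those rather than treat it as obvious.
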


\begin{proof}
Due to Theorem \ref{theorem_Mg} the space $\mathcal{M}^1_g(\mathbf{p})$ is a smooth $g$-dimensional manifold. From Proposition 4.12 in \cite{Hauswirth_Kilian_Schmidt_1} we know that the mapping 
$$
A: \mathcal{P}_g^1(\mathbf{p}) \to \mathcal{M}_g^1(\mathbf{p}),\; \xi_{\lambda} \mapsto -\lambda \det(\xi_{\lambda})
$$
is a principal fiber bundle with fiber $(\SS^1)^g$ and thus $\mathcal{P}_g^1(\mathbf{p})$ is a manifold of dimension $2g$. Due to Lemma \ref{lemma_phi_embedding} the map $\phi: \mathcal{P}_g^1(\mathbf{p}) \to M_g^{\mathbf{p}}$ is an embedding and thus $M_g^{\mathbf{p}}$ is a manifold of dimension $2g$ as well.
\end{proof}

Note, that the structure of such ``finite-gap manifolds'' is also investigated in \cite{Dorfmeister_Wu_Loop_groups} and \cite{Bobenko_Kuksin_Sine, Kuksin_Hamiltonian_PDE}. 

\subsection{Hamiltonian formalism}
It will turn out that $M_g^{\mathbf{p}}$ can be considered as a symplectic manifold with a certain symplectic form $\Omega$. To see this, we closely follow the exposition of \cite{McKean} and consider the phase space of $(q,p) \in C_{\mathbf{p}}^{\infty} \times C_{\mathbf{p}}^{\infty}$ equipped with the symplectic form
$$
\Omega((\delta q, \delta p),(\widetilde{\delta q},\widetilde{\delta p})) = \int_0^{\mathbf{p}} \left(\delta q(x)\widetilde{\delta p}(x) - \widetilde{\delta q}(x)\delta p(x) \right)\,dx
$$
and the Poisson bracket
$$
\{f,g\} = \int_0^{\mathbf{p}} \langle\nabla f, J \nabla g\rangle \,dx \; \text{ with } \; J=\begin{pmatrix} 0 & 1 \\ -1 & 0 \end{pmatrix}.
$$
Here $f$ and $g$ are functionals of the form $h:C_{\mathbf{p}}^{\infty} \times C_{\mathbf{p}}^{\infty} \to \RR,\; (q,p)\mapsto h(q,p)$ and $\nabla h$ denotes the corresponding gradient of $h$ in the function space $C_{\mathbf{p}}^{\infty} \times C_{\mathbf{p}}^{\infty}$. Note, that there holds $\{f,g\} = \Omega(\nabla f, \nabla g)$. If we consider functionals $H,f$ on the function space $M = C_{\mathbf{p}}^{\infty} \times C_{\mathbf{p}}^{\infty}$ we have
$$
df(X) = \int_0^{\mathbf{p}} \langle \nabla f, X\rangle \, dx
$$
and
$$
X(H)= J \nabla H.
$$
Since $X(H)$ is a vector field it defines a flow $\Phi: O \subset M \times \RR \to M$ such that $\Phi((q_0,p_0),t)$ solves
$$
\frac{d}{dt}\Phi((q_0,p_0),t) = X(H)(\Phi((q_0,p_0),t)) \; \text{ with } \; \Phi((q_0,p_0),0) = (q_0,p_0).
$$
In the following we will write $(q(t),p(t))^t := \Phi((q_0,p_0),t)$ for integral curves of $X(H)$ that start at $(q_0,p_0)$. A direct calculation shows
$$
\frac{d}{dt} f(q(t),p(t))\bigg|_{t=0} = df(X(H)) = \int_0^{\mathbf{p}} \langle \nabla f, J \nabla H \rangle \, dx = \{f,H\}
$$
and we see again that $f$ is an integral of motion if and only if $f$ and $H$ are in involution. Set $(q,p) = (u,u_y)$, where $u$ is a solution of the $\sinh$-Gordon equation, i.e.
$$
\Delta u + 2\sinh(2u) = u_{xx} + u_{yy} + 2\sinh(2u) = 0.
$$
Setting $t = y$ we can investigate the so-called $\sinh$-Gordon flow that is expressed by
\[
\frac{d}{d y}\begin{pmatrix} u \\ u_y \end{pmatrix} = 
\begin{pmatrix} u_y \\ -u_{xx}-2\sinh(2u) \end{pmatrix} = 
J\nabla H_2 = \begin{pmatrix} 0 & 1 \\ -1 & 0 \end{pmatrix} \begin{pmatrix} \frac{\del H_2}{\del q} \\ \frac{\del H_2}{\del p} \end{pmatrix}
\]
with the Hamiltonian
$$
H_2(q,p) = \int_0^\mathbf{p} \tfrac{1}{2} p^2-\tfrac{1}{2}(q_x)^2+\cosh(2q) \, dx = \int_0^\mathbf{p} \tfrac{1}{2} (u_y)^2 - \tfrac{1}{2}(u_x)^2 + \cosh(2u) \, dx
$$
and corresponding gradient
$$
\nabla H_2 = \left(q_{xx}+2\sinh(2q),p\right)^t = \left(u_{xx}+2\sinh(2u),u_y\right)^t.
$$

\begin{remark}\label{remark_periodic_flow}
Since we have a loop group splitting (the $r$-Iwasawa decomposition) in the finite type situation, all corresponding flows can be integrated. Thus the flow $(q(y),p(y))^t=(u(x,y),u_y(x,y))^t$ that corresponds to the $\sinh$-Gordon flow is defined for all $y \in \RR$.
\end{remark}

Due to Remark \ref{remark_periodic_flow} $q(y) = u(x,y)$ is a periodic solution of the $\sinh$-Gordon equation with $u(x+\mathbf{p},y) = u(x,y)$ for all $(x,y) \in \RR^2$. The Hamiltonian $H_2$ is an integral of motion, another one is associated with the flow of translation (here we set $t = x$) induced by the functional
$$
H_1(q,p) = \int_0^{\mathbf{p}} pq_x \,dx = \int_0^{\mathbf{p}} u_y u_x \,dx \;\;\text{ with }\;\; \frac{d}{d x}\begin{pmatrix} u \\ u_y \end{pmatrix} = 
\begin{pmatrix} u_x \\ u_{yx}\end{pmatrix} = 
J\nabla H_1.
$$

\section{Polynomial Killing fields and integrals of motion}

Following \cite{Kilian_Schmidt_infinitesimal}, we will now describe how the functions $\varphi((\lambda,\mu),z) := F_{\lambda}^{-1}(z)v(\lambda,\mu)$ and $\psi((\lambda,\mu),z) := \left(\begin{smallmatrix} 0 & i \\ -i & 0 \end{smallmatrix}\right)\sigma^*\varphi((\lambda,\mu),z)$ can be used to describe the functions $\omega_n,\sigma_n,\tau_n$ from the Pinkall-Sterling iteration.

\begin{proposition}[\cite{Kilian_Schmidt_infinitesimal}, Proposition 3.1]\label{proposition_omega_lambda}
Define $\omega := \psi_1\varphi_1 - \psi_2\varphi_2$. Then
\begin{enumerate}
\item[(i)] The functon $h = \psi^t \varphi$ satisfies $dh=0$.
\item[(ii)] The function $\omega$ is in the kernel of the Jacobi operator and can be supplemented to a parametric Jacobi field with corresponding (up to complex constants)
$$
\tau = \frac{i\psi_2\varphi_1}{e^u},\;\;\; \sigma = -\frac{i\psi_1\varphi_2}{e^u}.
$$ 
\end{enumerate}
\end{proposition}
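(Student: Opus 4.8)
The plan is to translate the statement into facts about solutions of the Lax equation $d\Phi_\lambda = [\Phi_\lambda,\alpha_\lambda]$ and then invoke Proposition \ref{prop_pinkall_sterling}. Since $v(\lambda,\nu)$ is independent of $z$ and $dF_\lambda = F_\lambda\alpha_\lambda$ (so $d(F_\lambda^{-1}) = -\alpha_\lambda F_\lambda^{-1}$), the map $\varphi = F_\lambda^{-1}v$ obeys $d\varphi = -\alpha_\lambda\varphi$. Writing $K := \left(\begin{smallmatrix} 0 & i \\ -i & 0\end{smallmatrix}\right)$ one has $K^{-1} = K$ and $KBK^{-1} = -B^t$ for every $B\in\mathfrak{sl}_2(\CC)$; as $\alpha_\lambda$ is $\mathfrak{sl}_2(\CC)$-valued and depends only on $\lambda$ (hence $\sigma^*\alpha_\lambda = \alpha_\lambda$) and $\sigma^*$ commutes with $d$, it follows that $d\psi = K\,\sigma^*(d\varphi) = -K\alpha_\lambda(\sigma^*\varphi) = -(K\alpha_\lambda K^{-1})\psi = \alpha_\lambda^t\psi$, i.e.\ $d(\psi^t) = \psi^t\alpha_\lambda$. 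Part (i) follows at once: $dh = d(\psi^t\varphi) = (\psi^t\alpha_\lambda)\varphi - \psi^t(\alpha_\lambda\varphi) = 0$, so $h$ is constant in $z$.

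For part (ii) I would look at the rank-one matrix $\varphi\psi^t$. The two evolution equations give $d(\varphi\psi^t) = (-\alpha_\lambda\varphi)\psi^t + \varphi(\psi^t\alpha_\lambda) = [\varphi\psi^t,\alpha_\lambda]$, and since $\trace(\varphi\psi^t) = \psi^t\varphi = h$ is $z$-constant by (i), the traceless matrix $\Phi_\lambda := \varphi\psi^t - \tfrac12 h\,\unity$ also solves $d\Phi_\lambda = [\Phi_\lambda,\alpha_\lambda]$. Its diagonal entries are $\pm\tfrac12(\psi_1\varphi_1 - \psi_2\varphi_2) = \pm\tfrac12\omega$ and its off-diagonal entries are $\psi_2\varphi_1$ and $\psi_1\varphi_2$; using $\psi_1 = i\,\sigma^*\varphi_2$ and $\psi_2 = -i\,\sigma^*\varphi_1$ one checks that every entry of $\Phi_\lambda$ is invariant under the hyperelliptic involution $\sigma$ (the off-diagonal ones are products of a component of $\varphi$ with the matching component of $\sigma^*\varphi$, and $\tfrac12\omega$ is the $\sigma$-symmetric combination), so $\Phi_\lambda$ descends to a matrix-valued function of $\lambda$. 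Rescaling the off-diagonal entries by $e^{\mp u}$ and absorbing the constants $\pm i$ puts $\Phi_\lambda$ into the shape of Proposition \ref{prop_pinkall_sterling}, giving, up to complex constants, $\tau = ie^{-u}\psi_2\varphi_1$ and $\sigma = -ie^{-u}\psi_1\varphi_2$. Equivalently, via the projector $P_x$ of the previous subsection one has $\varphi\psi^t = h\,P_x$, and using $P_x + \sigma^*P_x = \unity$ together with $\zeta_\lambda(x) = \tfrac{\nu}{\lambda}(P_x - \sigma^*P_x)$ one gets $\Phi_\lambda = \tfrac{h\lambda}{2\nu}\zeta_\lambda(x)$, so $\Phi_\lambda$ is the polynomial Killing field up to a $z$-independent scalar function of $\lambda$; this provides an alternative route.

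Expanding $\Phi_\lambda$ in powers of $\lambda$ and applying Proposition \ref{prop_pinkall_sterling}(ii),(iii) to the graded pieces then shows that $\omega$ lies in the kernel of the Jacobi operator $\Delta + 4\cosh(2u)$ and that $\tau,\sigma$ complete it to a parametric Jacobi field via the Pinkall--Sterling iteration, the two functions being determined only up to the additive complex constants of Proposition \ref{prop_pinkall_sterling}(iv). Alternatively one may avoid the reference by writing $d\Phi_\lambda = [\Phi_\lambda,\alpha_\lambda]$ in its $\partial_z$ and $\partial_{\bar z}$ components with $\alpha_\lambda$ taken from Assumption \ref{assumption_gamma_1} and reading off the relations $\tau_{\bar z} = ie^{-2u}\omega$ and $\tau_z = 2iu_z\omega_z - i\omega_{zz}$ directly. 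I would use the Lax-equation argument as the main line, since it makes transparent why both parts hold: $\varphi$ and $\psi$ evolve linearly in $z$ via $d\varphi = -\alpha_\lambda\varphi$ and $d\psi = \alpha_\lambda^t\psi$, so $\varphi\psi^t$, minus its $z$-constant trace, is automatically a solution of the Lax equation.

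The step I expect to be the real obstacle is matching the $\lambda$-expansion of $\Phi_\lambda$ with the Pinkall--Sterling one, in particular showing that the principal part of $\Phi_\lambda$ at $\lambda = 0$ is the nilpotent term $\tfrac{\lambda^{-1}}{2}\left(\begin{smallmatrix} 0 & ie^u \\ 0 & 0\end{smallmatrix}\right)$ and hence that the expansion of $\omega$ starts with $\omega_{-1} = 0$, $\omega_0 = u_z$. This requires controlling the normalization of the eigenvector $v$ of Lemma \ref{lemma_eigenbundle} at the branch points $y_0,y_\infty$, an asymptotic analysis in which the finite-type hypothesis and the smoothness of the spectral curve are essential (cf.\ the remark following Theorem \ref{expansion}); the remaining steps are the bookkeeping indicated above.
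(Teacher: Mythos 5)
The paper does not prove this proposition at all --- it is imported verbatim from \cite{Kilian_Schmidt_infinitesimal} (Proposition 3.1) --- so there is no internal argument to compare against and your proposal must be judged on its own merits. It is essentially correct and is the natural argument. Part (i) is complete as written: from $d\varphi=-\alpha_\lambda\varphi$, $\sigma^*\alpha_\lambda=\alpha_\lambda$ and $K\alpha_\lambda K^{-1}=-\alpha_\lambda^t$ for $K=\left(\begin{smallmatrix}0&i\\-i&0\end{smallmatrix}\right)$ you correctly obtain $d\psi^t=\psi^t\alpha_\lambda$ and hence $dh=0$. For (ii), the observation that $\varphi\psi^t-\tfrac12 h\,\unity$ is a traceless solution of $d\Phi_\lambda=[\Phi_\lambda,\alpha_\lambda]$ is exactly the mechanism that makes the statement work, and writing out the component equations of this Lax equation at a \emph{fixed} point of the spectral curve (e.g.\ $B_{\bar z}=ie^{-u}A+u_{\bar z}B$ for the $(1,2)$-entry $B$ and diagonal entry $A$, whence $\tau_{\bar z}=ie^{-2u}A$ for $\tau:=e^{-u}B$) yields the Jacobi equation for the diagonal entry and the Pinkall--Sterling relations for the off-diagonal ones. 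In particular, the issue you flag as the main obstacle --- controlling the principal part of the expansion at $\lambda=0$ --- is not needed for this proposition: the Jacobi equation is $\lambda$-independent and follows pointwise in $\lambda$; the expansion only matters for Proposition \ref{proposition_expansion_omega}. Two points of care remain. First, the diagonal entry of your $\Phi_\lambda$ is $\tfrac12\omega$, not $\omega$. Second, the constants $i,-i$ in $\tau,\sigma$ cannot simply be ``absorbed'': rescaling the two off-diagonal entries by $c^2$ and $c^{-2}$ amounts to conjugating by $\diag(c,c^{-1})$, which alters $\alpha_\lambda$, so apart from one overall scalar on the whole triple (which is all that ``up to complex constants'' tolerates) these constants are forced by the component equations and should be checked explicitly against the relations of Proposition \ref{prop_pinkall_sterling}(iii).
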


\begin{proposition}[\cite{Kilian_Schmidt_infinitesimal}, Proposition 3.3] \label{proposition_expansion_omega}
Let $h = \psi^t\varphi$. Then the entries of
$$
P(z) := \frac{\psi \varphi^t}{\psi^t \varphi} = \frac{1}{h}
\begin{pmatrix}
\psi_1\varphi_1 & \psi_1 \varphi_2 \\
\psi_2\varphi_1 & \psi_2 \varphi_2
\end{pmatrix}
$$
have the asymptotic expansions at $\lambda=0$
\begin{gather*}
-\frac{i\omega}{2h} = \frac{1}{\sqrt{\lambda}} \sum_{n=1}^{\infty} \omega_n (-\lambda)^n,\;\; -\frac{i\tau}{h} = \frac{1}{\sqrt{\lambda}}\sum_{n=0}^{\infty} \tau_n (-\lambda)^n, \;\;
-\frac{i\sigma}{h} = \frac{1}{\sqrt{\lambda}}\sum_{n=1}^{\infty} \sigma_n (-\lambda)^n.
\end{gather*}
\end{proposition}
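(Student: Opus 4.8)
The plan is to identify $P(z)$ \emph{in closed form} as a spectral projector of the polynomial Killing field, and then to read the three asymptotic expansions off the known Laurent shape of $\zeta_\lambda$ together with the branch behaviour of $\nu$ at $\lambda=0$.

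\emph{Step 1: $P(z)$ is a spectral projector.} Since $\varphi((\lambda,\nu),z)=F_\lambda^{-1}(z)v(\lambda,\nu)$ and $\xi_\lambda v=\widetilde\nu\,v$ with $\widetilde\nu=\nu/\lambda$, the vector $\varphi(z)$ is an eigenvector of $\zeta_\lambda(z)=F_\lambda^{-1}(z)\xi_\lambda F_\lambda(z)$ for the eigenvalue $\widetilde\nu$. Because $\sigma$ acts only on $\nu$, $\sigma^*\varphi(z)$ is the eigenvector for $-\widetilde\nu$; and since $\zeta_\lambda(z)$ is trace--free, the conjugation $A\mapsto \left(\begin{smallmatrix} 0 & i \\ -i & 0 \end{smallmatrix}\right)A\left(\begin{smallmatrix} 0 & i \\ -i & 0 \end{smallmatrix}\right)=-A^{t}$ carries the $(-\widetilde\nu)$--eigenline of $\zeta_\lambda(z)$ to the $\widetilde\nu$--eigenline of $\zeta_\lambda(z)^{t}$, so $\psi(z)=\left(\begin{smallmatrix} 0 & i \\ -i & 0 \end{smallmatrix}\right)\sigma^*\varphi(z)$ is an eigenvector of $\zeta_\lambda(z)^{t}$ for $\widetilde\nu$. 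Together with $h=\psi^{t}\varphi\neq 0$ being $z$--independent (Proposition \ref{proposition_omega_lambda}(i)), this shows that $P(z)=\psi\varphi^{t}/h$ is the rank--one spectral projector of $\zeta_\lambda(z)^{t}$ onto its $\widetilde\nu$--eigenline, i.e.
$$
P(z)=\tfrac12\unity+\tfrac{1}{2\widetilde\nu}\,\zeta_\lambda(z)^{t}.
$$

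\emph{Step 2: expansion at $\lambda=0$.} Here $\lambda=0$ is a branch point of $Y$ with $a(0)\neq 0$, so $\sqrt\lambda$ is a local parameter and $\widetilde\nu^{-1}=\lambda/\nu=\sqrt\lambda/\sqrt{a(\lambda)}$ is holomorphic in it; meanwhile $\zeta_\lambda(z)=\Phi_\lambda(z)-\lambda^{g-1}\overline{\Phi_{1/\bar\lambda}(z)}^{t}$ is an explicit Laurent polynomial in $\lambda$ with coefficients $\omega_n,\tau_n,\sigma_n$ (and their conjugates). Substituting into the formula of Step~1, the entries of $\tfrac{1}{2\widetilde\nu}\zeta_\lambda(z)^{t}$ become Laurent series in $\sqrt\lambda$; the $\tfrac{\lambda^{-1}}{2}ie^{u}\epsilon_+$--term of $\Phi_\lambda$ makes the $(2,1)$--entry start at order $\lambda^{-1/2}$, while the $(1,2)$--entry and the difference of diagonal entries start at order $\lambda^{1/2}$. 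Rewriting $-\tfrac{i\omega}{2h}=-\tfrac i2\big((P)_{11}-(P)_{22}\big)$, $-\tfrac{i\tau}{h}=(P)_{21}/e^{u}$, $-\tfrac{i\sigma}{h}=-(P)_{12}/e^{u}$ then yields series of exactly the shapes asserted in the proposition.

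\emph{Step 3: the coefficients are the Pinkall--Sterling functions.} To see that the expansion coefficients really coincide with the $\omega_n,\tau_n,\sigma_n$ of the iteration, I would use that $P(z)$ obeys a Lax equation: from $(\tfrac{d}{dx}+U_\lambda)\varphi=0$ and $\tfrac{d}{dx}\psi=U_\lambda^{t}\psi$ (the latter again via $\trace U_\lambda=0$ and the $\left(\begin{smallmatrix} 0 & i \\ -i & 0 \end{smallmatrix}\right)$--twist) one gets $\tfrac{d}{dx}P=[U_\lambda^{t},P]$, equivalently $\tfrac{d}{dx}\zeta_\lambda^{t}=[U_\lambda^{t},\zeta_\lambda^{t}]$. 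Inserting the $\sqrt\lambda$--ansatz and collecting powers of $\lambda$ reproduces verbatim the recursion of Proposition \ref{prop_pinkall_sterling}(iii); its leading datum is fixed by the $\epsilon_+$--nilpotent behaviour of $\xi_\lambda$ at $\lambda=0$, and the integration constants $c_n$ are pinned down by the normalization $v=(1,v_2)^{t}$ of the eigenvector (equivalently, by single--valuedness of $\mu$ on $Y$). Uniqueness of the solution of that recursion forces the coefficients to be $\omega_n,\tau_n,\sigma_n$.

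\textbf{Main obstacle.} The conceptual content (Step~1) is short; the real work lies in Step~2--3, i.e.\ the asymptotic analysis at the branch point $\lambda=0$, where $\xi_\lambda$ degenerates to a nilpotent and ordinary eigenvector perturbation theory breaks down, so everything must be resolved in the half--integer parameter $\sqrt\lambda$. The delicate part is the bookkeeping: tracking which matrix entry carries which half--integer power, keeping the leading normalizing constants straight (the factors of $\tfrac12$ and of $\sqrt{a(0)}$), and matching these against the normalization of the constants $c_n$ in the Pinkall--Sterling iteration. This is careful computation rather than a new idea, but it is where the proof can most easily go wrong.
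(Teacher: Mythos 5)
The paper itself gives no proof of Proposition \ref{proposition_expansion_omega}: it is imported verbatim from \cite{Kilian_Schmidt_infinitesimal}, Proposition 3.3, so there is no in-paper argument to compare yours against. Judged on its own, your Step 1 is correct and is the right starting point: $\varphi=F_\lambda^{-1}v$ is the $\widetilde\nu$-eigenvector of $\zeta_\lambda(z)=F_\lambda^{-1}\xi_\lambda F_\lambda$, the conjugation $A\mapsto\left(\begin{smallmatrix}0&i\\-i&0\end{smallmatrix}\right)A\left(\begin{smallmatrix}0&i\\-i&0\end{smallmatrix}\right)=-A^{t}$ on traceless matrices turns $\sigma^*\varphi$ into the $\widetilde\nu$-eigenvector of $\zeta_\lambda^{t}$, and from $P^2=P$ and $P\zeta_\lambda^{t}=\zeta_\lambda^{t}P=\widetilde\nu P$ one indeed gets $P=\tfrac12\unity+\tfrac{1}{2\widetilde\nu}\zeta_\lambda^{t}$; with $\widetilde\nu^{-1}=\sqrt\lambda/\sqrt{a(\lambda)}$ this also yields the correct half-integer starting orders of the three entries.

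The gap is that Steps 2--3 only sketch the part that \emph{is} the proposition, namely that the $\sqrt\lambda$-coefficients are the Pinkall--Sterling functions with the stated signs, prefactors and index offsets. Two concrete points need repair. First, the recursion of Proposition \ref{prop_pinkall_sterling}(iii) is a system in $z$ and $\bar z$ (e.g.\ $\tau_{n,\bar z}=ie^{-2u}\omega_n$), so the single equation $\tfrac{d}{dx}P=[U_\lambda^{t},P]$ cannot reproduce it; you need the full $dP=[\alpha_\lambda^{t},P]$, i.e.\ both $U_\lambda$ and $V_\lambda$, which is available because $\varphi,\psi$ are built from the full frame $F_\lambda(z)$, but is not what you wrote. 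Second, the assertion that the integration constants $c_n$ of the iteration are ``pinned down by the normalization $v=(1,v_2)^{t}$'' is exactly the nontrivial claim and is left unargued; note also that $\zeta_\lambda$ is a \emph{finite} Laurent polynomial while the asserted expansions are infinite series (the tail comes from dividing by $\sqrt{a(\lambda)}$), so the coefficients cannot literally be ``read off the Laurent shape of $\zeta_\lambda$'' term by term. The standard way to close this --- and the route of the cited reference --- is to expand the eigenfunctions $\varphi,\psi$ themselves asymptotically at $\lambda=0$ via the formal diagonalization of Theorem \ref{expansion}, which produces the series entrywise, fixes the leading data, and lets one verify the recursion together with its normalization. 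Until that is carried out, the precise coefficient identification, which you yourself flag as the place ``where the proof can most easily go wrong,'' remains open.
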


%By applying the involution $\rho$ we can compute the asymptotic expansion of $P(z)$ at $\lambda =\infty$ from the expansion at $\lambda=0$. Since we have $\bar{\tau} =\sigma$ we get $\rho^*(\frac{\omega}{h}) = \frac{\omega}{h}$ and $\rho^*(\frac{\bar{\tau}}{h}) = \frac{\sigma}{h}$. This yields
%
%\begin{corollary}[\cite{Kilian_Schmidt_infinitesimal}, Corollary 3.4]
%The entries of $P(z)$ have at $\lambda=\infty$ the asymptotic expansions
%\begin{eqnarray*}
%\frac{i\omega}{2h} & = & \sqrt{\lambda} \sum_{n=1}^{\infty} (-1)^n\bar{\omega}_n \lambda^{-n}, \\
%\frac{i\tau}{h} & = & \sqrt{\lambda}\sum_{n=1}^{\infty} (-1)^n \bar{\sigma}_n \lambda^{-n}, \\
%\frac{i\sigma}{h} & = & \sqrt{\lambda}\sum_{n=0}^{\infty} (-1)^n \bar{\tau}_n \lambda^{-n}.
%\end{eqnarray*}
%\end{corollary}

\begin{definition}
Consider the asymptotic expansion 
$$
\ln\mu = \frac{1}{\sqrt{\lambda}}\frac{i\mathbf{p}}{2} + \sqrt{\lambda}\sum_{n \geq 0} c_n \lambda^{n} \;\; \text{ at } \; \lambda = 0
$$
and set $H_{2n+1} := (-1)^{n+1}\Re(c_{n})$ and $H_{2n+2} := (-1)^{n+1}\Im(c_{n})$ for $n\geq 0$.
\end{definition}

\begin{remark}
Since
$$
\ln\mu = \tfrac{1}{\sqrt{\lambda}}\tfrac{i\mathbf{p}}{2} + \sqrt{\lambda}\int_0^{\mathbf{p}}\left(-i(\del u)^2 + \tfrac{i}{2}\cosh(2u)\right)\,dt + O(\lambda)
$$
at $\lambda = 0$ we see that the functions $H_1, H_2$ are given by
\begin{eqnarray*}
H_1 & = & \int_0^{\mathbf{p}} \tfrac{1}{2} u_y u_x \,dx, \\
H_2 & = & -\int_0^\mathbf{p} \tfrac{1}{4} (u_y)^2 - \tfrac{1}{4}(u_x)^2 + \tfrac{1}{2}\cosh(2u) \, dx.
\end{eqnarray*}
These functions are proportional to the Hamiltonians that induce the flow of translation and the $\sinh$-Gordon flow respectively.
\end{remark}

We will now illustrate the link between the Pinkall-Sterling iteration from Proposition \ref{prop_pinkall_sterling} and these functions $H_n$ (which we call \textbf{Hamiltonians} from now on) and show that the functions $H_n$ are pairwise in involution. Recall the formula
$$
\frac{d}{dt}H((u,u_y) + t(\delta u,\delta u_y))|_{t=0} = dH_{(u,u_y)}(\delta u,\delta u_y) = \Omega(\nabla H(u,u_y),(\delta u,\delta u_y))
$$
from the last section. First we need the following lemma.

\begin{lemma}\label{lemma_variation_ln_mu}
For the map $\ln\mu$ we have the variational formula
$$
\frac{d}{dt}\ln\mu((u,u_y) + t(\delta u,\delta u_y))\bigg|_{t=0} = \int_0^{\mathbf{p}}\frac{1}{\varphi^t\psi}\psi^t\delta U_{\lambda} \varphi \,dx
$$
with
$$
\delta U_{\lambda} = \frac{1}{2}
\begin{pmatrix}
-i\delta u_y & i\lambda^{-1} e^u \delta u -i e^{-u}\delta u \\
i\lambda  e^u\delta u -i e^{-u}\delta u & i\delta u_y
\end{pmatrix}.
$$
\end{lemma}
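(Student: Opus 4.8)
The plan is to differentiate the eigenvalue relation $M_\lambda v = \mu v$ in the deformation parameter $t$ and isolate $\delta\mu := \tfrac{d}{dt}\mu|_{t=0}$ by pairing against a left eigenvector of $M_\lambda$. First I would record the elementary computation, obtained by differentiating \eqref{eq_U_lambda} with respect to $u$ and $u_y$, that the variation of $U_\lambda$ under $(u,u_y) \mapsto (u,u_y) + t(\delta u,\delta u_y)$ is precisely the matrix $\delta U_\lambda$ in the statement; this is the only step that uses the explicit shape of $U_\lambda$.

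Next I would compute $\delta M_\lambda$. Since $F_\lambda$ solves $\tfrac{d}{dx}F_\lambda = F_\lambda U_\lambda$ with $F_\lambda(0) = \unity$ and $M_\lambda = F_\lambda(\mathbf{p})$, the variation $\delta F_\lambda$ satisfies the inhomogeneous linear ODE $\tfrac{d}{dx}\delta F_\lambda = (\delta F_\lambda)U_\lambda + F_\lambda\,\delta U_\lambda$ with $\delta F_\lambda(0) = 0$. The substitution $\delta F_\lambda = G F_\lambda$ reduces this to $\tfrac{d}{dx}G = F_\lambda\,\delta U_\lambda\,F_\lambda^{-1}$, $G(0) = 0$, so that, using $\delta\mathbf{p} = 0$,
$$
\delta M_\lambda = \left(\int_0^{\mathbf{p}} F_\lambda(x)\,\delta U_\lambda(x)\,F_\lambda^{-1}(x)\,dx\right)M_\lambda.
$$

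Now, if $w$ denotes a left eigenvector with $w^t M_\lambda = \mu w^t$, then differentiating $M_\lambda v = \mu v$ and pairing with $w^t$ annihilates the $\delta v$ contribution and yields $\delta\ln\mu = \tfrac{\delta\mu}{\mu} = \tfrac{w^t(\delta M_\lambda)v}{\mu\,w^t v}$. Inserting the formula for $\delta M_\lambda$ and using $M_\lambda v = \mu v$ cancels the factor $\mu$ and leaves
$$
\delta\ln\mu = \frac{1}{w^t v}\int_0^{\mathbf{p}} w^t F_\lambda(x)\,\delta U_\lambda(x)\,F_\lambda^{-1}(x)\,v\,dx.
$$
It remains to recognise $F_\lambda^{-1}(x)v = \varphi(x)$ (immediate from the definition, with $\varphi(0) = v$ since $F_\lambda(0) = \unity$), to identify $w^t F_\lambda(x) = \psi(x)^t$, and to identify $w^t v = \varphi^t\psi$. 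For the left eigenvector I would take $w := \psi(0)$. Using $\psi = \left(\begin{smallmatrix} 0 & i \\ -i & 0\end{smallmatrix}\right)\sigma^*\varphi$ and the skew-symmetry of that matrix one has $\psi^t\sigma^*\varphi = 0$, while $\psi^t\varphi = h$ is constant by Proposition \ref{proposition_omega_lambda}(i); since both $\varphi$ and $\sigma^*\varphi$ solve $\tfrac{d}{dx}(\cdot) = -U_\lambda(\cdot)$ and span $\CC^2$ pointwise away from the branch points, differentiating these two scalar relations forces $\tfrac{d}{dx}\psi^t = \psi^t U_\lambda$, hence $\psi(x)^t = \psi(0)^t F_\lambda(x) = w^t F_\lambda(x)$. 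Evaluating at $x = \mathbf{p}$ and using $M_\lambda(\sigma^*v) = \mu^{-1}\sigma^*v$ gives $\psi(\mathbf{p}) = \mu\psi(0)$, which confirms $w^t M_\lambda = \mu w^t$. Finally $w^t v = \psi(0)^t\varphi(0) = h = \varphi^t\psi$ is constant, so it may be moved under the integral sign, producing exactly $\delta\ln\mu = \int_0^{\mathbf{p}}\tfrac{1}{\varphi^t\psi}\psi^t\delta U_\lambda\varphi\,dx$.

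The main obstacle is this last identification, i.e. showing that $\psi$ is precisely the left eigenvector of $M_\lambda$ transported by $F_\lambda$; the variation-of-parameters formula for $\delta M_\lambda$ and the first-order perturbation of a simple eigenvalue are routine. As in Corollary \ref{expansion_mu}, all these manipulations should be read as identities of functions of $\lambda$ on a punctured neighbourhood of $\lambda = 0$, where $\mu$, $v$ and hence $\varphi$, $\psi$ are defined.
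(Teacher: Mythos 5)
Your proof is correct and follows essentially the same route as the paper: variation of parameters gives $\delta M_\lambda = \left(\int_0^{\mathbf{p}}F_\lambda\,\delta U_\lambda\,F_\lambda^{-1}\,dx\right)M_\lambda$, and the variation of the eigenvalue is then extracted by sandwiching between the left and right eigenvectors (the paper does this via Lemma \ref{lemma_var_general}, you via direct first-order perturbation of a simple eigenvalue, which is the same computation). Your explicit verification that $\psi(x)^t=\psi(0)^tF_\lambda(x)$, that $\psi(0)$ is a left eigenvector for $\mu$, and that $w^tv=\varphi^t\psi=h$ is constant fills in an identification the paper leaves implicit, and is carried out correctly.
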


\begin{proof}
We follow the ansatz presented in \cite{McKean}, Section $6$, and obtain for $F_{\lambda}(x)$ solving $\frac{d}{dx}F_{\lambda} = F_{\lambda}U_{\lambda}$ with $F_{\lambda}(0) = \unity$ the variational equation
$$
\frac{d}{dx}\frac{d}{dt}F_{\lambda}(\delta u,\delta u_y)\bigg|_{t=0} = \left(\frac{d}{dt}F_{\lambda}(\delta u,\delta u_y)\bigg|_{t=0}\right) U_{\lambda} + F_{\lambda}\delta U_{\lambda} 
$$
with 
$$
\left(\frac{d}{dt}F_{\lambda}(\delta u,\delta u_y)\bigg|_{t=0}\right)(0) = \begin{pmatrix} 0 & 0 \\ 0 & 0 \end{pmatrix}
$$
and
$$
\delta U_{\lambda} = \frac{1}{2}
\begin{pmatrix}
-i\delta u_y & i\lambda^{-1} e^u \delta u -i e^{-u}\delta u \\
i\lambda  e^u\delta u -i e^{-u}\delta u & i\delta u_y
\end{pmatrix}.
$$
The solution of this differential equation is given by
$$
\left(\frac{d}{dt}F_{\lambda}(\delta u,\delta u_y)\bigg|_{t=0}\right)(x) = \left(\int_0^x F_{\lambda}(y) \delta U_{\lambda}(y) F_{\lambda}^{-1}(y) \,dy\right) F_{\lambda}(x)
$$
and evaluating at $x = \mathbf{p}$ yields
$
\frac{d}{dt}M_{\lambda}(\delta u,\delta u_y)|_{t=0} = \left(\int_0^{\mathbf{p}} F_{\lambda}(y) \delta U_{\lambda}(y) F_{\lambda}^{-1}(y) \,dy\right) M_{\lambda}.
$
Due to Lemma \ref{lemma_var_general} there holds
$$
\delta M_{\lambda} = \frac{d}{dt}M_{\lambda}(\delta u,\delta u_y)\bigg|_{t=0}= \left[\sum_{i=1}^2 \frac{(\delta v_i) w_i^t}{w_i^t v_i},M_{\lambda}\right] + (P(\delta\mu) +\sigma^*P(\delta\mu)).
$$
If we multiply the last equation with $\frac{w_1^t}{w_1^t v_1}$ from the left and $v_1$ from the right we get
$$
\mu \frac{w_1^t\delta v_1}{w_1^t v_1} - \mu \frac{w_1^t \delta v_1}{w_1^t v_1}  + \delta \mu \frac{w_1^t v_1}{w_1^t v_1} = \delta \mu = \mu \int_0^{\mathbf{p}}\frac{1}{\varphi^t\psi}\psi^t\delta U_{\lambda} \varphi \,dx
$$
and therefore
$
\frac{d}{dt}\ln\mu(\delta u,\delta u_y)|_{t=0} = \int_0^{\mathbf{p}}\frac{1}{\varphi^t\psi}\psi^t\delta U_{\lambda} \varphi \,dx.
$
This proves the claim.
\end{proof}

We now will apply Lemma \ref{lemma_variation_ln_mu} and Corollary \ref{expansion_mu} to establish a link between solutions $\omega_n$ of the homogeneous Jacobi equation from Proposition \ref{prop_pinkall_sterling} and the Hamiltonians $H_n$.

\begin{theorem}\label{theorem_hamiltonians}
For the series of Hamiltonians $(H_n)_{n\in \NN}$ and solutions $(\omega_n)_{n\in \NN_0}$ of the homogeneous Jacobi equation from the Pinkall-Sterling iteration there holds
$$
\nabla H_{2n+1} = (\Re(\omega_n(\cdot,0)),\Re(\del_y \omega_n(\cdot,0))) \; \text{ and } \; \nabla H_{2n+2} = (\Im(\omega_n(\cdot,0)),\Im(\del_y \omega_n(\cdot,0))).
$$
\end{theorem}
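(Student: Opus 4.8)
The plan is to differentiate $\ln\mu$ along a variation $(\delta u,\delta u_y)$ of the Cauchy data, using the variational formula of Lemma \ref{lemma_variation_ln_mu}, to rewrite the resulting integrand in terms of the quantities $\omega,\tau,\sigma$ of Proposition \ref{proposition_omega_lambda}, to expand everything asymptotically at $\lambda=0$ with Proposition \ref{proposition_expansion_omega}, and finally to read off $\nabla H_{2n+1}$ and $\nabla H_{2n+2}$ by comparing with the defining expansion $\ln\mu=\tfrac{1}{\sqrt\lambda}\tfrac{i\mathbf p}{2}+\sqrt\lambda\sum_{n\ge0}c_n\lambda^n$ and the symplectic form $\Omega$.

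Concretely, I would first insert the explicit matrix $\delta U_\lambda$ from Lemma \ref{lemma_variation_ln_mu} into $\psi^t\delta U_\lambda\varphi$ and use $\omega=\psi_1\varphi_1-\psi_2\varphi_2$, $\psi_2\varphi_1=-ie^u\tau$, $\psi_1\varphi_2=ie^u\sigma$ and $h=\psi^t\varphi$ from Proposition \ref{proposition_omega_lambda} to obtain, after setting $y=0$,
\[
\frac{\psi^t\delta U_\lambda\varphi}{\psi^t\varphi}=\tfrac12\Big(-i\,\delta u_y\,\tfrac{\omega}{h}+\delta u\,\big(\tfrac{\tau}{h}(\lambda e^{2u}-1)-\tfrac{\sigma}{h}(\lambda^{-1}e^{2u}-1)\big)\Big),
\]
so that $\tfrac{d}{dt}\ln\mu\big|_{t=0}$ is the integral of this over $x\in[0,\mathbf p]$. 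Substituting the expansions $-\tfrac{i\omega}{2h}=\tfrac{1}{\sqrt\lambda}\sum_{n\ge1}\omega_n(-\lambda)^n$, $-\tfrac{i\tau}{h}=\tfrac{1}{\sqrt\lambda}\sum_{n\ge0}\tau_n(-\lambda)^n$, $-\tfrac{i\sigma}{h}=\tfrac{1}{\sqrt\lambda}\sum_{n\ge1}\sigma_n(-\lambda)^n$ from Proposition \ref{proposition_expansion_omega} turns $\delta\ln\mu$ into an asymptotic series in $\sqrt\lambda$ whose coefficient of $\lambda^{n+1/2}$ is the $x$-integral of a differential polynomial in $u,u_y,\delta u,\delta u_y$ built from $\omega_n,\tau_n,\sigma_n$.

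The heart of the proof is the simplification of this coefficient. Since the period is fixed, $\delta\mathbf p=0$, so the coefficients of all non-positive powers of $\sqrt\lambda$ in $\delta\ln\mu$ must vanish after integration; I would verify this using the Pinkall--Sterling recursion $\tau_{n,\bar z}=ie^{-2u}\omega_n$, $\omega_{n+1}=-i\tau_{n,z}-2iu_z\tau_n$, $\sigma_{n+1}=e^{2u}\tau_n+2i\omega_{n+1,\bar z}$ of Proposition \ref{prop_pinkall_sterling}(iii), the sinh-Gordon equation $u_{z\bar z}=-\tfrac12\sinh(2u)$, and integration by parts in $x$. For the coefficient of $\lambda^{n+1/2}$ one uses the same recursion together with the companion Lax equation $\del_y\Phi_\lambda=[\Phi_\lambda,V_\lambda]$, which expresses $\del_y\omega_n$ as a fixed linear combination of $\tau_n,\tau_{n-1},\sigma_{n+1},\sigma_n$, to convert the $\delta u$-term into a multiple of $\del_y\omega_n$ modulo an exact $x$-derivative, while the $\delta u_y$-term is already a multiple of $\omega_n$. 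The net outcome should be that the coefficient of $\lambda^{n+1/2}$ in $\delta\ln\mu$ equals $(-1)^{n+1}\int_0^{\mathbf p}\big(\omega_n\,\delta u_y-(\del_y\omega_n)\,\delta u\big)\,dx$, evaluated at $y=0$.

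To conclude, the coefficient of $\lambda^{n+1/2}$ in $\delta\ln\mu$ is $\delta c_n=(-1)^{n+1}\big(dH_{2n+1}(\delta u,\delta u_y)+i\,dH_{2n+2}(\delta u,\delta u_y)\big)$ by the definition of the $H_k$; comparing with the previous step gives
\[
dH_{2n+1}(\delta u,\delta u_y)+i\,dH_{2n+2}(\delta u,\delta u_y)=\int_0^{\mathbf p}\big(\omega_n\,\delta u_y-(\del_y\omega_n)\,\delta u\big)\,dx .
\]
Splitting into real and imaginary parts and matching with $dH(\delta u,\delta u_y)=\Omega(\nabla H,(\delta u,\delta u_y))$ and $\Omega((\delta q,\delta p),(\widetilde{\delta q},\widetilde{\delta p}))=\int_0^{\mathbf p}(\delta q\,\widetilde{\delta p}-\widetilde{\delta q}\,\delta p)\,dx$ gives $\nabla H_{2n+1}=(\Re\omega_n(\cdot,0),\Re\del_y\omega_n(\cdot,0))$ and $\nabla H_{2n+2}=(\Im\omega_n(\cdot,0),\Im\del_y\omega_n(\cdot,0))$. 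The main obstacle is the middle step: making sure that, after the integrations by parts and the repeated use of the Pinkall--Sterling recursion and the sinh-Gordon equation, the $\tau$- and $\sigma$-contributions to the $\delta u$-coefficient collapse exactly to $-\del_y\omega_n$ with the correct constant, and that every lower-order coefficient vanishes. Bookkeeping of the factors of $i$, the alternating signs $(-\lambda)^n$ in Proposition \ref{proposition_expansion_omega}, and the index shifts between $\omega_n$, $\tau_n$ and $\sigma_{n+1}$ is where the real work lies; the remaining steps are formal.
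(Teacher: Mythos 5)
Your proposal follows the paper's proof essentially step for step: Lemma \ref{lemma_variation_ln_mu}, the substitution of $\omega,\tau,\sigma$ via Proposition \ref{proposition_omega_lambda}, the Pinkall--Sterling relations to produce $\omega_y$, the expansion of Proposition \ref{proposition_expansion_omega}, and the final coefficient comparison against $\Omega$ and the defining expansion of $\ln\mu$. The only organizational difference is that the paper performs the conversion $(\lambda e^{2u}-1)\tau-(\lambda^{-1}e^{2u}-1)\sigma=-(\del\omega-\delbar\omega)=i\omega_y$ once, as a pointwise identity between the generating functions, so that no integration by parts and no separate order-by-order check that the non-positive powers of $\sqrt{\lambda}$ cancel is needed, whereas you expand first and grind through the recursion coefficient by coefficient.
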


\begin{proof}
Considering the result of Lemma \ref{lemma_variation_ln_mu} a direct calculation gives
\begin{eqnarray*}
\frac{d}{dt}\ln\mu(\delta u,\delta u_y)\bigg|_{t=0} & = & \int_0^{\mathbf{p}}\frac{1}{\varphi^t\psi}\psi^t\delta U_{\lambda} \varphi \,dx \\
& \stackrel{h=\varphi^t \psi}{=} & \int_0^{\mathbf{p}}\frac{1}{2h}\psi^t
\begin{pmatrix}
-i\delta u_y & i\lambda^{-1} e^u \delta u -i e^{-u}\delta u \\
i\lambda  e^u\delta u -i e^{-u}\delta u & i\delta u_y
\end{pmatrix}
\varphi \,dx \\
& = & \int_0^{\mathbf{p}} \frac{1}{2h} ((\psi_2\varphi_1(i\lambda e^u - ie^{-u}) + \psi_1\varphi_2(i\lambda^{-1}e^u-ie^{-u})) \delta u \\
& & \hspace{1.3cm} -\, i(\psi_1\varphi_1 - \psi_2\varphi_2)\delta u_y) \,dx \\
& \stackrel{\text{Prop. }\ref{proposition_omega_lambda}}{=} & \int_0^{\mathbf{p}} \frac{1}{2h} \left((\lambda e^{2u}\tau - \tau -\lambda^{-1}e^{2u}\sigma +\sigma)\delta u -i\omega \, \delta u_y \right) \,dx \\ 
& \stackrel{\text{Prop. }\ref{prop_pinkall_sterling}}{=}& \int_0^{\mathbf{p}} \frac{1}{2h} \left(-(\del\omega -\delbar\omega)\delta u -i\omega \, \delta u_y \right) \,dx \\
& = & \int_0^{\mathbf{p}} \frac{1}{2h} \left(i\omega_y \, \delta u -i\omega \, \delta u_y \right) \,dx = 
\Omega(-\tfrac{i}{2h}  (\omega, \omega_y), (\delta u, \delta u_y)) \\
& = & \Omega\left(\tfrac{-i}{2h}(\Re(\omega), \Re(\omega_y)), (\delta u, \delta u_y)\right) + i\Omega\left(\tfrac{-i}{2h}(\Im(\omega),\Im(\omega_y)), (\delta u, \delta u_y)\right) \\
& = & \sqrt{\lambda}\sum_{n\geq 0}(-1)^{n+1} \Omega\left((\Re(\omega_n), \Re(\del_y\omega_n)), (\delta u, \delta u_y)\right) \lambda^{n} \\
& & +\, i\sqrt{\lambda}\sum_{n\geq 0} (-1)^{n+1} \Omega\left((\Im(\omega_n),\Im(\del_y\omega_n)), (\delta u, \delta u_y)\right) \lambda^{n}
\end{eqnarray*}
around $\lambda = 0$ due to Proposition \ref{proposition_expansion_omega}. On the other hand we know from Corollary \ref{expansion_mu} that we have the following asymptotic expansion of $\ln\mu$ around $\lambda = 0$ 
\begin{eqnarray*}
\ln\mu & = & \tfrac{1}{\sqrt{\lambda}}\tfrac{i\mathbf{p}}{2} + \sqrt{\lambda}\int_0^{\mathbf{p}}\left(-i(\del u)^2 + \tfrac{i}{2}\cosh(2u)\right)\,dt + O(\lambda) \\
& = & \tfrac{1}{\sqrt{\lambda}}\tfrac{i\mathbf{p}}{2} + \sqrt{\lambda}\sum_{n\geq 0} c_n \lambda^{n} \\
& = & \tfrac{1}{\sqrt{\lambda}}\tfrac{i\mathbf{p}}{2} + \sqrt{\lambda}\sum_{n\geq 0} (-1)^{n+1} H_{2n+1} \lambda^n + i\sqrt{\lambda}\sum_{n\geq 0} (-1)^{n+1} H_{2n+2} \lambda^{n}.
\end{eqnarray*}
Thus we get
\begin{eqnarray*}
\frac{d}{dt}\ln\mu(\delta u,\delta u_y)\bigg|_{t=0} & = & \sqrt{\lambda}\sum_{n\geq 0} (-1)^{n+1} \Omega\left(\nabla H_{2n+1}, (\delta u, \delta u_y)\right) \lambda^{n} \\
& & +\, i\sqrt{\lambda}\sum_{n\geq 0} (-1)^{n+1} \Omega\left(\nabla H_{2n+2}, (\delta u, \delta u_y)\right) \lambda^{n}
\end{eqnarray*}
and a comparison of the coefficients of the two power series yields the claim.
\end{proof}

\section{An inner product on $\Lambda_r\mathfrak{sl}_2(\CC)$} 

We already introduced a differential operator $L_{\lambda}:= \frac{d}{dx} + U_{\lambda}$ such that the $\sinh$-Gordon flow can be expressed in commutator form, i.e.
$$
\tfrac{d}{dy} L_{\lambda} = \tfrac{d}{dy}U_{\lambda} = [L_{\lambda},V_{\lambda}] = \tfrac{d}{dx}V_{\lambda} + [U_{\lambda},V_{\lambda}].
$$

In the following we will translate the symplectic form $\Omega$ with respect to the identification $(u,u_y) \simeq U_{\lambda}$. First recall that the span of $\{\epsilon_+,\epsilon_-, \epsilon\}$ is $\mathfrak{sl}_2(\CC)$ and that the inner product 
$$
\langle \cdot,\cdot \rangle: \mathfrak{sl}_2(\CC) \times \mathfrak{sl}_2(\CC) \to \CC, \; (\alpha,\beta) \mapsto \langle \alpha,\beta \rangle := \trace(\alpha\cdot \beta)
$$
is non-degenerate. We will now extend the inner product $\langle\cdot,\cdot\rangle$ to a non-degenerate inner product $\langle\cdot,\cdot \rangle_{\Lambda}$ on $\Lambda_r\mathfrak{sl}_2(\CC) = \Lambda_r \mathfrak{su}_2(\CC) \oplus \Lambda^+_{r}\mathfrak{sl}_2(\CC)$.

% such that
%$$
%\langle\cdot,\cdot \rangle_{\Lambda}|_{\Lambda_r \mathfrak{su}_2(\CC) \times \Lambda_r \mathfrak{su}_2(\CC)} \equiv 0 \; \text{ and } \; \langle\cdot,\cdot \rangle_{\Lambda}|_{\Lambda^+_{r}\mathfrak{sl}_2(\CC) \times \Lambda^+_{r}\mathfrak{sl}_2(\CC)} \equiv 0,
%$$
%i.e. $\Lambda_r \mathfrak{su}_2(\CC)$ and $\Lambda^+_{r}\mathfrak{sl}_2(\CC)$ are isotropic subspaces of $\Lambda_r\mathfrak{sl}_2(\CC)$ with respect to $\langle\cdot,\cdot \rangle_{\Lambda}$. 

\begin{lemma}\label{lemma_manin}
The map $\langle\cdot,\cdot \rangle_{\Lambda}: \Lambda_r\mathfrak{sl}_2(\CC) \times \Lambda_r\mathfrak{sl}_2(\CC) \to \RR$ given by
$$
(\alpha,\beta) \mapsto \langle\alpha,\beta \rangle_{\Lambda} := \Im \left(\text{Res}_{\lambda = 0} \tfrac{d\lambda}{\lambda}\trace(\alpha\cdot\beta)\right)
$$
is bilinear and non-degenerate. Moreover, there holds
$$
\langle\cdot,\cdot \rangle_{\Lambda}|_{\Lambda_r \mathfrak{su}_2(\CC) \times \Lambda_r \mathfrak{su}_2(\CC)} \equiv 0 \; \text{ and } \; \langle\cdot,\cdot \rangle_{\Lambda}|_{\Lambda^+_{r}\mathfrak{sl}_2(\CC) \times \Lambda^+_{r}\mathfrak{sl}_2(\CC)} \equiv 0,
$$
i.e. $\Lambda_r \mathfrak{su}_2(\CC)$ and $\Lambda^+_{r}\mathfrak{sl}_2(\CC)$ are isotropic subspaces of $\Lambda_r\mathfrak{sl}_2(\CC)$ with respect to $\langle\cdot,\cdot \rangle_{\Lambda}$.
\end{lemma}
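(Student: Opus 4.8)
The plan is to verify the three assertions — bilinearity, vanishing on each of the two summands, and non-degeneracy — more or less directly, using the structure of the Iwasawa splitting $\Lambda_r\mathfrak{sl}_2(\CC)=\Lambda_r\mathfrak{su}_2(\CC)\oplus\Lambda_r^+\mathfrak{sl}_2(\CC)$ together with the reality properties that define these two subspaces. Bilinearity over $\RR$ is immediate: $\alpha\mapsto\operatorname{Res}_{\lambda=0}\tfrac{d\lambda}{\lambda}\trace(\alpha\beta)$ is $\CC$-linear, the trace is symmetric and bilinear, the residue is $\CC$-linear, and taking the imaginary part of a $\CC$-linear functional yields an $\RR$-bilinear form. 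So the only real content is in the isotropy of the two subspaces and in non-degeneracy.

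For isotropy of $\Lambda_r\mathfrak{su}_2(\CC)$: an element $\alpha$ of this space satisfies the $\mathfrak{su}_2$-type reality condition $\overline{\alpha(1/\bar\lambda)}^{\,t}=-\alpha(\lambda)$ on the annulus $A_r$, and is holomorphic (hence with no residue contribution of its own at $\lambda=0$ — but the point is rather the symmetry of the pairing). Given $\alpha,\beta\in\Lambda_r\mathfrak{su}_2(\CC)$, I would compute how $\operatorname{Res}_{\lambda=0}\tfrac{d\lambda}{\lambda}\trace(\alpha(\lambda)\beta(\lambda))$ behaves under $\lambda\mapsto 1/\bar\lambda$. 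Writing $f(\lambda):=\trace(\alpha(\lambda)\beta(\lambda))$, the reality condition gives $\overline{f(1/\bar\lambda)}=\trace\!\big(\overline{\alpha(1/\bar\lambda)}^{\,t}\,\overline{\beta(1/\bar\lambda)}^{\,t}\big)=\trace\big((-\alpha(\lambda))(-\beta(\lambda))\big)=f(\lambda)$, so $f$ is invariant under that involution in the appropriate sense. Since $\tfrac{d\lambda}{\lambda}$ transforms to $-\tfrac{d\lambda}{\lambda}$ under $\lambda\mapsto 1/\lambda$, and residues at $0$ and $\infty$ of a form on an annulus are linked by this substitution, the conclusion is that $\operatorname{Res}_{\lambda=0}\tfrac{d\lambda}{\lambda}f$ equals its own complex conjugate up to a sign that forces the imaginary part to vanish; alternatively, since $\alpha,\beta$ are holomorphic on $A_r\supset\SS^1$, the quantity $\operatorname{Res}_{\lambda=0}\tfrac{d\lambda}{\lambda}f=\tfrac1{2\pi i}\oint_{\SS^1}f(\lambda)\tfrac{d\lambda}{\lambda}$ can be computed on the unit circle, where $\lambda=e^{it}$ parametrizes $SU(2)$-valued data and $f$ is real-valued, making $\oint f\,\tfrac{d\lambda}{\lambda}=i\int_0^{2\pi}f(e^{it})\,dt$ purely imaginary, so its imaginary part times $\tfrac1{2\pi}$ — wait, one must be careful: $\langle\alpha,\beta\rangle_\Lambda=\Im(\tfrac1{2\pi i}\cdot i\int f\,dt)=\Im(\tfrac1{2\pi}\int f\,dt)=0$ since that integral is real. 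For isotropy of $\Lambda_r^+\mathfrak{sl}_2(\CC)$: here $\alpha,\beta$ extend holomorphically across $\lambda=0$ (with the normalized lower-triangular value at $0$), so $\trace(\alpha\beta)$ is holomorphic at $0$, hence $\operatorname{Res}_{\lambda=0}\tfrac{d\lambda}{\lambda}\trace(\alpha\beta)=\trace(\alpha(0)\beta(0))$, and one checks that the product of two such normalized matrices $\left(\begin{smallmatrix}\ast&\ast\\0&\ast\end{smallmatrix}\right)$ composed appropriately has a real trace — more precisely $\alpha(0),\beta(0)\in\mathfrak{sl}_2(\CC)$ are upper-triangular with real diagonal, so $\trace(\alpha(0)\beta(0))$ is real and its imaginary part vanishes.

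For non-degeneracy, I would argue as follows. Suppose $\alpha\in\Lambda_r\mathfrak{sl}_2(\CC)$ pairs to zero with everything. Decompose $\alpha=\alpha^++\alpha^-$ along the Iwasawa splitting. Because the two summands are isotropic, $\langle\alpha,\beta\rangle_\Lambda=\langle\alpha^+,\beta^-\rangle_\Lambda+\langle\alpha^-,\beta^+\rangle_\Lambda$ for $\beta=\beta^++\beta^-$; so $\alpha$ being in the kernel forces $\alpha^-$ to annihilate all of $\Lambda_r\mathfrak{su}_2(\CC)$ under the pairing and $\alpha^+$ to annihilate all of $\Lambda_r^+\mathfrak{sl}_2(\CC)$. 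It then suffices to show the pairing $\Lambda_r\mathfrak{su}_2(\CC)\times\Lambda_r^+\mathfrak{sl}_2(\CC)\to\RR$ is a perfect (non-degenerate) pairing. This is the expression of these two subspaces being Lagrangian complements for the Manin-type form; concretely, expanding a general element of $\Lambda_r^+\mathfrak{sl}_2(\CC)$ in a Laurent/Taylor series in $\lambda$ and an element of $\Lambda_r\mathfrak{su}_2(\CC)$ in its series on $A_r$, the residue $\operatorname{Res}_{\lambda=0}\tfrac{d\lambda}{\lambda}\trace(\cdot)$ pairs the coefficient of $\lambda^{-k}$ in one factor with the coefficient of $\lambda^{k}$ in the other via the non-degenerate trace form on $\mathfrak{sl}_2(\CC)$; the reality conditions on the two sides are "complementary" in precisely the way needed so that no nonzero $\alpha^-$ kills every such $\beta^+$ and vice versa.

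The main obstacle will be bookkeeping in this last step: getting the reality conditions straight (the $\mathfrak{su}_2$ condition $\overline{\alpha(1/\bar\lambda)}^t=-\alpha(\lambda)$ on the annulus versus the $\Lambda^+$ normalization at $\lambda=0$) and checking that, coefficient by coefficient, the pairing induced by $\Im\operatorname{Res}_{\lambda=0}\tfrac{d\lambda}{\lambda}\trace(\cdot\,\cdot)$ is genuinely perfect rather than merely non-zero — i.e. that $\Lambda_r^+\mathfrak{sl}_2(\CC)$ is exactly the annihilator of $\Lambda_r\mathfrak{su}_2(\CC)$ and not strictly smaller. Since the two subspaces are closed and complementary and the form restricted to each is zero, a dimension/annihilator count (carried out degree by degree in $\lambda$) closes the argument; this is where I would be most careful, but it is routine once the reality conditions are pinned down.
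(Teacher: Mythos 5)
Your proposal is correct in substance, and the two isotropy arguments essentially coincide with the paper's (your observation that $\operatorname{Res}_{\lambda=0}\tfrac{d\lambda}{\lambda}\trace(\alpha\beta)=\tfrac{1}{2\pi}\int_0^{2\pi}\trace(\alpha\beta)(e^{it})\,dt$ is real because $\trace(\alpha\beta)$ is real-valued on $\SS^1$ for $\mathfrak{su}(2)$-valued loops is actually cleaner and more general than the paper's computation, which manipulates Laurent coefficients via $\alpha_i=-\overline{\alpha_{-i}}^t$ and, as written, only displays the terms $i\in\{-1,0,1\}$; likewise your evaluation $\trace(\alpha(0)\beta(0))\in\RR$ for the $\Lambda_r^+$ part is exactly the paper's). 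Where you genuinely diverge is non-degeneracy: you route through the Iwasawa splitting, reduce to showing that the pairing between the two isotropic halves is perfect, and defer a coefficient-by-coefficient "annihilator count." That deferred step does work (e.g.\ a nonzero coefficient $a_k$, $k\ge 1$, of an element of $\Lambda_r^+\mathfrak{sl}_2(\CC)$ is detected by the loop with $b_{-k}$ arbitrary and $b_k=-\overline{b_{-k}}^t$, and the $\lambda^0$-coefficients are detected by a direct $2\times2$ computation), but be aware that "dimension count" is not available in these infinite-dimensional spaces, so you really must exhibit a dual partner for each nonzero coefficient. The paper short-circuits all of this: given $\xi=\sum_i\lambda^i\xi_i$ with some $\xi_j\neq 0$, it pairs $\xi$ with the single Laurent monomial $\widetilde{\xi}=i\lambda^{-j}\overline{\xi_j}^t\in\Lambda_r\mathfrak{sl}_2(\CC)$ and gets $\langle\xi,\widetilde{\xi}\rangle_{\Lambda}=\trace(\xi_j\overline{\xi_j}^t)>0$ in one line, with no reference to the splitting or to the isotropy statements. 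Your approach buys a structural explanation (the Manin-triple picture: two complementary isotropic subspaces in perfect duality), at the cost of the only step in your write-up that is not fully carried out; the paper's buys brevity and logical independence of the isotropy claims.
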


\begin{proof} The bilinearity of $\langle\cdot,\cdot \rangle_{\Lambda}$ follows from the bilinearity of $\trace(\cdot)$. Now consider a non-zero element $\xi = \sum_{i \in \mathcal{I}} \lambda^i \xi_i \in \Lambda_r \mathfrak{sl}_2(\CC)$ and pick out an index $j \in \mathcal{I}$ such that $\xi_j \neq 0$. Setting $\widetilde{\xi} = i \lambda^{-j} \overline{\xi}^t_j$ we obtain 
$$
\langle \xi, \widetilde{\xi}\rangle_{\Lambda} = \Im \left(\text{Res}_{\lambda = 0} \tfrac{d\lambda}{\lambda}\trace(\xi\cdot\widetilde{\xi})\right) = \trace(\xi_j \cdot \overline{\xi}^t_j) \in \RR^+
$$
since $\xi_j \neq 0$. This shows that $\langle\cdot,\cdot \rangle_{\Lambda}$ is non-degenerate, i.e. the first part of the lemma. \\

We will now prove the second part of the lemma, namely that $\Lambda_r \mathfrak{su}_2(\CC)$ and $\Lambda^+_{r}\mathfrak{sl}_2(\CC)$ are isotropic subspaces of $\Lambda_r\mathfrak{sl}_2(\CC)$ with respect to $\langle\cdot,\cdot \rangle_{\Lambda}$. \\

First we consider $\alpha^+ = \alpha^+_{\lambda} = \sum_i \lambda^i \alpha_i^+,\; \widetilde{\alpha}^+ = \sum_i \lambda^i \widetilde{\alpha}_i^+ \in \Lambda_r \mathfrak{su}_2(\CC)$ with 
$$
\alpha^+_{\lambda} = -\overline{\alpha^+_{1/\bar{\lambda}}}^t \; \text{ and } \; \widetilde{\alpha}^+_{\lambda} = -\overline{\widetilde{\alpha}^+_{1/\bar{\lambda}}}^t.
$$
Then one obtains
\begin{eqnarray*}
\langle \alpha^+,\widetilde{\alpha}^+\rangle_{\Lambda} & = & \Im \left(\text{Res}_{\lambda = 0} \tfrac{d\lambda}{\lambda}\trace(\alpha^+\cdot\widetilde{\alpha}^+)\right) \\
& = & \Im(\trace(\alpha_{-1}^+\cdot\widetilde{\alpha}_1^+ + \alpha_0^+\cdot \widetilde{\alpha}_0^+ + \alpha_1^+\cdot \widetilde{\alpha}_{-1}^+)).
\end{eqnarray*}
A direct calculation gives
\begin{eqnarray*}
\overline{\trace(\alpha_{-1}^+ \widetilde{\alpha}_1^+ + \alpha_0^+ \widetilde{\alpha}_0^+ + \alpha_1^+ \widetilde{\alpha}_{-1}^+)} & = & \trace((-\overline{\alpha}_{-1}^+)^t (-\overline{\widetilde{\alpha}}_1^+)^t + (-\overline{\alpha}_0^+)^t (-\overline{\widetilde{\alpha}}_0^+)^t + (-\overline{\alpha}_1^+)^t (-\overline{\widetilde{\alpha}}_{-1}^+)^t) \\
& \stackrel{!}{=} & \trace(\alpha_{-1}^+ \widetilde{\alpha}_1^+ + \alpha_0^+ \widetilde{\alpha}_0^+ + \alpha_1^+ \widetilde{\alpha}_{-1}^+)
\end{eqnarray*}
and thus $\trace(\alpha_{-1}^+ \widetilde{\alpha}_1^+ + \alpha_0^+ \widetilde{\alpha}_0^+ + \alpha_1^+ \widetilde{\alpha}_{-1}^+) \in \RR$. This shows
$$
\langle \alpha^+,\widetilde{\alpha}^+\rangle_{\Lambda} = \Im(\trace(\alpha_{-1}^+\cdot\widetilde{\alpha}_1^+ + \alpha_0^+\cdot \widetilde{\alpha}_0^+ + \alpha_1^+\cdot \widetilde{\alpha}_{-1}^+)) = 0.
$$

Now consider $\beta^- = \sum_{i\geq 0} \lambda^i\beta_i^-,\; \widetilde{\beta}^- = \sum_{i\geq 0} \lambda^i\widetilde{\beta}_i^- \in \Lambda^+_{r}\mathfrak{sl}_2(\CC)$ where $\beta_0^-, \widetilde{\beta}_0^-$ are of the form
$$
\beta_0^- = \begin{pmatrix} h_0 & e_0 \\ 0 & -h_0 \end{pmatrix}, \;\; \widetilde{\beta}_0^- = \begin{pmatrix} \widetilde{h}_0 & \widetilde{e}_0 \\ 0 & -\widetilde{h}_0 \end{pmatrix}
$$
with $h_0,\widetilde{h}_0 \in \RR$ and $e_0,\widetilde{e}_0 \in \CC$. Then one gets
\begin{eqnarray*}
\langle \beta^-,\widetilde{\beta}^-\rangle_{\Lambda} & = & \Im \left(\text{Res}_{\lambda = 0} \tfrac{d\lambda}{\lambda}\trace(\beta^-\cdot\widetilde{\beta}^-)\right) \\
& = & \Im(\trace(\beta_0^-\cdot \widetilde{\beta}_0^-)) \\
& = & \Im(2 h_0\widetilde{h}_0) \\
% & = & \Im \trace \begin{pmatrix} h_0 \widetilde{h}_0 & * \\ 0 & h_0 \widetilde{h}_0 \end{pmatrix} \\
& = & 0.
\end{eqnarray*}
This yields the second claim and concludes the proof.
\end{proof}

\begin{remark}
Recall the notion of \textit{Manin triples} $(\mathfrak{g},\mathfrak{p},\mathfrak{q})$ that were introduced by Vladimir Drinfeld \cite{Drinfeld}. They consist of a Lie algebra $\mathfrak{g}$ with a non-degenerate invariant symmetric bilinear form, together with two isotropic subalgebras $\mathfrak{p}$ and $\mathfrak{q}$ such that $\mathfrak{g} = \mathfrak{p} \oplus \mathfrak{q}$ as a vector space. Thus Lemma \ref{lemma_manin} shows that $(\Lambda_r\mathfrak{sl}_2(\CC), \Lambda_r \mathfrak{su}_2(\CC), \Lambda^+_{r}\mathfrak{sl}_2(\CC))$ is a Manin triple.
\end{remark}

\section{The symplectic form $\Omega$ and Serre duality}

This section incorporates our previous results and establishes a connection between the symplectic form $\Omega$ and Serre Duality \cite[Thm. 17.9]{Fo}. Moreover, we will show that $(M_g^{\mathbf{p}},\Omega,H_2)$ is a completely integrable Hamiltonian system.

\begin{definition}
Let $H^0_{\RR}(Y,\Omega) := \{\omega \in H^0(Y,\Omega) \left|\right. \overline{\eta^*\omega} = -\omega\}$ be the real part of $H^0(Y,\Omega)$ with respect to the involution $\eta$.
\end{definition}

Since $\eta$ is given by $(\lambda,\mu) \mapsto (1/\bar{\lambda},\bar{\mu})$ we have
$$
\eta^*\overline{\delta\ln\mu} = \delta\ln\mu \;\; \text{ and } \;\; \eta^*\overline{\tfrac{d\lambda}{\lambda}} = -\tfrac{d\lambda}{\lambda}.
$$
Let us define the map $\omega: T_{(u,u_y)}M_g^{\mathbf{p}} \to H^0_{\RR}(Y(u,u_y),\Omega)$ by
$$
(\delta u, \delta u_y) \mapsto \omega(\delta u, \delta u_y) := \delta \ln\mu(\delta u, \delta u_y) \frac{d\lambda}{\lambda}.
$$

\begin{remark}\label{remark_dY}
Due to Theorem \ref{theorem_Mg} we can identify the space $T_{Y(u,u_y)}\Sigma_g^{\mathbf{p}}$ of infinitesimal non-isospectral (but iso-periodic) deformations of $Y(u,u_y)$ with the space $H^0_{\RR}(Y(u,u_y),\Omega)$ via the map $c \mapsto \omega(c) := \frac{c}{\nu}\frac{d\lambda}{\lambda} = \delta\ln\mu \frac{d\lambda}{\lambda}$. Therefore $\omega$ can be identified with $dY$, the derivative of $Y: M_g^{\mathbf{p}} \to \Sigma_g^{\mathbf{p}}$. Due to Proposition \ref{proposition_fiber_bundle}, the map $Y: M_g^{\mathbf{p}} \to \Sigma_g^{\mathbf{p}}$ is a submersion. Thus the map $\omega: T_{(u,u_y)}M_g^{\mathbf{p}} \to H^0_{\RR}(Y(u,u_y),\Omega)$ is surjective.
\end{remark}

%Note that deformations which keep the period $\mathbf{p}$ fixed indeed correspond to a holomorphic 1-form $\omega$ since in that case we have
%$$
%\omega = \delta\ln\mu \frac{d\lambda}{\lambda} = \frac{c(\lambda)}{\lambda}\frac{d\lambda}{\nu} = \frac{\sum_{i=1}^g c_i\lambda^i}{\lambda}\frac{d\lambda}{\nu} = \sum_{i=1}^g c_i \frac{\lambda^{i-1} d\lambda}{\nu}.
%$$

\begin{definition}
 Let $L_{(u,u_y)} \subset T_{(u,u_y)} M_g^{\mathbf{p}}$ be the kernel of the the map $\omega: T_{(u,u_y)} M_g^{\mathbf{p}} \to H^0_{\RR}(Y(u,u_y), \Omega)$, i.e. $L_{(u,u_y)} := \ker(\omega)$.
\end{definition}

Now we are able to formulate and prove the main result. The proof is based on the ideas and methods presented in the proof of \cite{Schmidt_infinite}, Theorem 7.5.

\begin{theorem}\label{pairing}\text{ }
\begin{enumerate}
\item[(i)] There exists an isomorphism of vector spaces $d\Gamma_{(u,u_y)}: H^1_{\RR}(Y(u,u_y), \mathcal{O}) \to L_{(u,u_y)}$.
\item[(ii)] For all $[f] \in H^1_{\RR}(Y(u,u_y), \mathcal{O})$ and all $(\delta u, \delta u_y) \in T_{(u,u_y)}M_g^{\mathbf{p}}$ the equation
\begin{equation}
\Omega(d\Gamma_{(u,u_y)}([f]),(\delta u,\delta u_y)) = i\, \text{Res}([f] \, \omega(\delta u,\delta u_y))
\label{master}
\end{equation}
holds. Here the right hand side is defined as in the Serre Duality Theorem \cite{Fo}.
\item[(iii)] $(M_g^{\mathbf{p}},\Omega,H_2)$ is a Hamiltonian system. In particular, $\Omega$ is non-degenerate on $M_g^{\mathbf{p}}$.
\end{enumerate}
\end{theorem}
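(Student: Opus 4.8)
The plan is to build the map $d\Gamma_{(u,u_y)}$ explicitly from the isospectral vector fields of Theorem \ref{theorem_vf_isospectral_U} and then verify the pairing identity \eqref{master} by a residue computation, from which non-degeneracy of $\Omega$ falls out. First I would define, for $[f]\in H^1_{\RR}(Y,\mathcal{O})$ represented by $f_0(\lambda,\nu)=\sum_{i=0}^{g-1}c_i\lambda^{-i-1}\nu$ as in Lemma \ref{Lemma_reality_H1}, the tangent vector $d\Gamma_{(u,u_y)}([f])$ to be the deformation $(\delta u,\delta u_y)\simeq \delta U_\lambda$ given by $\delta U_\lambda(x)=[A_{f_0}^+(x),L_\lambda(x)]$ of Theorem \ref{theorem_vf_isospectral_U}. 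By construction this is the infinitesimal isospectral action of $\RR^g\simeq H^1_{\RR}(Y,\mathcal{O})$ on $\operatorname{Pic}^{\RR}_{g+1}(Y)$, i.e. it moves the divisor $D$ but not the spectral curve $Y$; hence $\omega(d\Gamma_{(u,u_y)}([f]))=\delta\ln\mu\,\tfrac{d\lambda}{\lambda}=0$, so the image lies in $L_{(u,u_y)}=\ker(\omega)$. To see $d\Gamma_{(u,u_y)}$ is an isomorphism onto $L_{(u,u_y)}$, count dimensions: $\dim_{\RR}H^1_{\RR}(Y,\mathcal{O})=g$ by Lemma \ref{Lemma_reality_H1}, while $\dim_{\RR}L_{(u,u_y)}=\dim_{\RR}T_{(u,u_y)}M_g^{\mathbf{p}}-\dim_{\RR}H^0_{\RR}(Y,\Omega)=2g-g=g$ using Remark \ref{remark_dY} (surjectivity of $\omega$) and $\dim_{\RR}M_g^{\mathbf{p}}=2g$ from Proposition \ref{proposition_fiber_bundle}; injectivity follows because the isospectral action is transitive on the fiber $\operatorname{Pic}^{\RR}_{g+1}(Y)\simeq(\SS^1)^g$ with no stabilizer at the infinitesimal level (the Krichever map $L$ of the preceding section is a local diffeomorphism), so a nonzero $[f]$ gives a nonzero vertical vector.

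For part (ii), the strategy is to compute both sides against the same test vector. Fix $[f]\in H^1_{\RR}(Y,\mathcal{O})$ and $(\delta u,\delta u_y)\in T_{(u,u_y)}M_g^{\mathbf{p}}$. Using Lemma \ref{lemma_variation_ln_mu} one has $\delta\ln\mu(\delta u,\delta u_y)=\int_0^{\mathbf{p}}\tfrac{1}{\varphi^t\psi}\psi^t\,\delta U_\lambda\,\varphi\,dx$, so the right-hand side $i\operatorname{Res}([f]\,\omega(\delta u,\delta u_y))$ is a residue at $\lambda=0$ of $f_0(\lambda,\nu)$ times this integral times $\tfrac{d\lambda}{\lambda}$. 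On the left, $\Omega(d\Gamma_{(u,u_y)}([f]),(\delta u,\delta u_y))$ is, via Remark \ref{isomorphism_U_lambda} and the translation of $\Omega$ into the trace-residue pairing of Lemma \ref{lemma_manin}, an expression $\int_0^{\mathbf{p}}\langle\cdot,\cdot\rangle_\Lambda$-type integral involving $[A_{f_0}^+,L_\lambda]$ and $\delta U_\lambda$. The key manipulation is to move from $A_{f_0}^+$ back to $A_{f_0}=\sum c_i\lambda^{-i}\zeta_\lambda(x)$: since $A_{f_0}^-=A_{f_0}-A_{f_0}^+\in\Lambda_r^+\mathfrak{sl}_2(\CC)$ and $L_\lambda$-brackets with $\Lambda_r\mathfrak{su}_2$ elements are $\langle\cdot,\cdot\rangle_\Lambda$-orthogonal to $\delta U_\lambda\in\Lambda_r\mathfrak{su}_2$ by the isotropy statement in Lemma \ref{lemma_manin}, one can replace $A_{f_0}^+$ by $A_{f_0}$ up to terms that do not contribute. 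Then $A_{f_0}=\sum c_i\lambda^{-i}\zeta_\lambda$ with $\zeta_\lambda(x)=P_x(\tfrac{\nu}{\lambda})+\sigma^*P_x(\tfrac{\nu}{\lambda})$ and $P_x(f)=\tfrac{v(x)fw(x)^t}{w(x)^tv(x)}$ (using $\varphi=F_\lambda^{-1}v$, $\psi=(\begin{smallmatrix}0&i\\-i&0\end{smallmatrix})\sigma^*\varphi$), so the two sides become the same contour integral over $\lambda$ and the same $x$-integral of $\tfrac{1}{\varphi^t\psi}\psi^t\delta U_\lambda\varphi$ weighted by $f_0$. Summing the $\zeta_\lambda$ and its $\sigma^*$-image accounts for the full residue on $Y$ (contributions over both sheets at $\lambda=0$), matching the Serre-duality residue on the right. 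This is the step I expect to be the main obstacle: keeping careful track of the $\sqrt{\lambda}$-versus-$\lambda$ gradings, the $\sigma^*$-symmetrization, and the precise constant $i$, and justifying that the $A_{f_0}^+\mapsto A_{f_0}$ replacement and the passage from the $x$-integrated $\Omega$ to a single residue at $\lambda=0$ introduce no spurious terms — essentially reproducing, in this sinh-Gordon setting, the computation of \cite{Schmidt_infinite}, Theorem 7.5.

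For part (iii), once \eqref{master} is established the non-degeneracy of $\Omega$ on $M_g^{\mathbf{p}}$ is immediate: given a nonzero $(\delta u,\delta u_y)\in T_{(u,u_y)}M_g^{\mathbf{p}}$, either $\omega(\delta u,\delta u_y)\neq 0$ in $H^0_{\RR}(Y,\Omega)$, and then by the non-degeneracy of the Serre pairing $H^0(Y,\Omega)\times H^1(Y,\mathcal{O})\to\CC$ (restricted compatibly to the real parts, using $\dim_{\RR}H^1_{\RR}=\dim_{\RR}H^0_{\RR}=g$) there is $[f]\in H^1_{\RR}(Y,\mathcal{O})$ with $i\operatorname{Res}([f]\,\omega(\delta u,\delta u_y))\neq 0$, whence $\Omega(d\Gamma_{(u,u_y)}([f]),(\delta u,\delta u_y))\neq 0$; or $\omega(\delta u,\delta u_y)=0$, so $(\delta u,\delta u_y)\in L_{(u,u_y)}=\operatorname{im}(d\Gamma_{(u,u_y)})$, say $(\delta u,\delta u_y)=d\Gamma_{(u,u_y)}([f])$ with $[f]\neq 0$, and then one pairs with a test vector whose $\omega$-image is dual to $[f]$ under Serre duality — again using surjectivity of $\omega$ from Remark \ref{remark_dY} — to get a nonzero value of $\Omega$. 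Hence $\Omega$ is a (closed, since it is the constant-coefficient form pulled back from $C_{\mathbf{p}}^\infty\times C_{\mathbf{p}}^\infty$) non-degenerate $2$-form, and together with the Hamiltonian $H_2$ and its flow (the sinh-Gordon flow, which by Remark \ref{remark_periodic_flow} is globally defined on $M_g^{\mathbf{p}}$) this exhibits $(M_g^{\mathbf{p}},\Omega,H_2)$ as a Hamiltonian system. Closedness of $\Omega$ should be remarked on explicitly since the statement only asserts "Hamiltonian system"; it is inherited from the ambient linear symplectic structure on $C_{\mathbf{p}}^\infty\times C_{\mathbf{p}}^\infty$ because $M_g^{\mathbf{p}}$ sits inside it as a submanifold.
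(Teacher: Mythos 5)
Your proposal follows essentially the same route as the paper: $d\Gamma$ is built from the isospectral vector fields of Theorem \ref{theorem_vf_isospectral_U}, part (i) is a dimension count using surjectivity of $\omega$, part (ii) translates $\Omega$ into the $\langle\cdot,\cdot\rangle_{\Lambda}$ pairing and uses the isotropy of Lemma \ref{lemma_manin} together with the decomposition of Lemma \ref{general_variation_U} to isolate the $\widehat{P}_x(\delta\ln\mu)$ term and land on the Serre residue, and part (iii) invokes non-degeneracy of the Serre pairing. The one step you defer --- the explicit identification of $\frac{1}{2}(\delta u\,\delta u_y^{f_0}-\delta u^{f_0}\delta u_y)$ with $-\Im(\trace(\delta U_0 A_{f_0,0}^-)+\trace(\delta U_{-1}A_{f_0,1}^-))$ via the coefficients $B_0^-,B_1^-$ --- is precisely the computation the paper carries out in detail, so your plan is correct as stated.
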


\begin{remark}\text{ }
\begin{enumerate}
\item[(i)]
From the Serre Duality Theorem \cite[Thm. 17.9]{Fo} we know that the pairing $\text{Res}: H^1(Y(u,u_y), \mathcal{O}) \times H^0(Y(u,u_y), \Omega) \to \CC$ is non-degenerate.
\item[(ii)]
$L_{(u,u_y)} \subset T_{(u,u_y)}M_g^{\mathbf{p}}$ is a maximal isotropic subspace with respect to the symplectic form $\Omega: T_{(u,u_y)}M_g^{\mathbf{p}} \times T_{(u,u_y)}M_g^{\mathbf{p}} \to \RR$, i.e. $L_{(u,u_y)}$ is Lagrangian.
\end{enumerate}
\end{remark}

\begin{proof}\text{}
\begin{enumerate}
\item[(i)]
Let $[f] = [(f_0,\eta^*\bar{f}_0)] \in H^1_{\RR}(Y,\mathcal{O})$ be a cocycle with representative $f_0$ as defined in Lemma \ref{Lemma_reality_H1}. Then, defining $A_{f_0}(x) := P_x(f_0) + \sigma^*P_x(f_0)$, we get
\begin{equation*}
A_{f_0}(x) = \sum_{i=0}^{g-1} c_i \lambda^{-i} (P_x(\lambda^{-1}\nu) + \sigma^*P_x(\lambda^{-1}\nu)) = \sum_{i=0}^{g-1} c_i \lambda^{-i} \zeta_{\lambda}(x)
\end{equation*}
and also
$$
\delta U_{\lambda}(x) = [A_{f_0}^+(x),L_{\lambda}(x)] = [L_{\lambda}(x),A_{f_0}^-(x)]
$$
due to Theorem \ref{theorem_vf_isospectral_U}. Moreover,
$$
\delta v(x) = -A_{f_0}^-(x) v(x)
$$
holds due to Remark \ref{remark_Af_minus} with $A_{f_0}^-(x) = -\sum \frac{(\delta v_i(x)) w_i^t(x)}{w_i^t(x) v_i(x)}$. From Lemma \ref{general_variation_U} we know that in general
$$
\delta U_{\lambda}(x) = \left[L_{\lambda}(x),-\sum_{i=1}^2 \frac{(\delta v_i(x)) w_i^t(x)}{w_i^t(x) v_i(x)}\right] + (P_x(\tfrac{\delta\ln\mu}{\mathbf{p}}) +\sigma^*P_x(\tfrac{\delta\ln\mu}{\mathbf{p}})).
$$
Since $\delta U_{\lambda}(x) = [L_{\lambda}(x),A_{f_0}^-(x)]$ we see that $\delta\ln\mu(\delta u^{f_0}, \delta u_y^{f_0}) = 0$ and consequently 
$$
(\delta u^{f_0}, \delta u_y^{f_0}) \in \ker(\omega).
$$
Thus we have an injective map $d\Gamma_{(u,u_y)}: H^1_{\RR}(Y(u,u_y), \mathcal{O}) \to L_{(u,u_y)}$. Due to Remark \ref{remark_dY} we know that $\omega: T_{(u,u_y)} M_g^{\mathbf{p}} \to H^0_{\RR}(Y(u,u_y), \Omega)$ is surjective. Since $\dim H^0_{\RR}(Y(u,u_y), \Omega) = g$ there holds $\dim L_{(u,u_y)} = \dim \ker(\omega) = g$ and thus $d\Gamma_{(u,u_y)}: H^1_{\RR}(Y(u,u_y),\mathcal{O}) \to L_{(u,u_y)}$ is an isomorphism of vector spaces.

\item[(ii)] 
For an isospectral variation of $U_{\lambda}(x)$ we have
\begin{equation*}
\delta U_{\lambda}(x) = [L_{\lambda},B^-(x)] = \tfrac{d}{dx}B^-(x) - [U_{\lambda}(x),B^-(x)]
\end{equation*}
with a map $B^-(x): \RR \to \Lambda_r^+ \mathfrak{sl}_2(\CC)$. In the following $\oplus$ will denote the $\Lambda_r \mathfrak{su}_2(\CC)$-part of $\Lambda_r \mathfrak{sl}_2(\CC) = \Lambda_r \mathfrak{su}_2(\CC) \oplus \Lambda_r^+ \mathfrak{sl}_2(\CC)$ and $\ominus$ will correspond to the second summand $\Lambda_r^+ \mathfrak{sl}_2(\CC)$. Since $\delta U_{\lambda}(x)$ lies in the $\oplus$-part and $\frac{d}{dx}B^-(x)$ 
% as well as $\widehat{P}_x(\tfrac{\delta\ln\mu}{\mathbf{p}}) := P_x(\tfrac{\delta\ln\mu}{\mathbf{p}}) + \sigma^*P_x(\tfrac{\delta\ln\mu}{\mathbf{p}})$ 
lies in the $\ominus$-part, we see that $\delta U_{\lambda}(x)$ is equal to the $\oplus$-part of the commutator expression $-[U_{\lambda}(x),B^-(x)]$. Writing $U$ for $U_{\lambda}(x)$ and $B^-$ for $B^-(x)$ we get
\begin{eqnarray*}
\delta U & = & \lambda^{-1}\delta U_{-1} + \lambda^0\delta U_0 + \lambda\delta U_1 \\
& \stackrel{!}{=} & \big(\lambda^{-1}[B_0^-,U_{-1}] + \lambda^0([B_1^-,U_{-1}] + [B_0^-,U_0]) \\
& & \hspace{1mm} +\, \lambda([B_2^-,U_{-1}]+[B_1^-,U_0]+[B_0^-,U_1])\big)_{\oplus}.
\end{eqnarray*}
Thus we arrive at three equations
\begin{gather*}
\delta U_{-1} = [B_0^-,U_{-1}], \;\;\; \delta U_0 = \left([B_1^-,U_{-1}] + [B_0^-,U_0]\right)_{\oplus}, \\
\delta U_1 = \left([B_2^-,U_{-1}]+[B_1^-,U_0]+[B_0^-,U_1]\right)_{\oplus}.
\end{gather*}
Recall that $U_{\lambda}$ is given by
$$
U_{\lambda} = \frac{1}{2}
\begin{pmatrix}
 -i u_y & i\lambda^{-1}e^u + ie^{-u} \\
i\lambda e^u + ie^{-u} & iu_y
\end{pmatrix}
$$
and consequently
$$
\delta U_{\lambda} = \frac{1}{2}
\begin{pmatrix}
-i\delta u_y & i\lambda^{-1} e^u \delta u -i e^{-u}\delta u \\
i\lambda  e^u\delta u -i e^{-u}\delta u & i\delta u_y
\end{pmatrix}.
$$
We can now use the above equations in order to obtain relations on the coefficients $B_0^-$ and $B_1^-$ of $B^- = \sum_{i\geq 0} \lambda^i B_i^-$ where $B_0^-$ is of the form
$$
B_0^- = \begin{pmatrix} h_0 & e_0 \\ 0 & -h_0 \end{pmatrix} \; \text{ with } \; h_0 \in \RR,\, e_0 \in \CC,
$$
and $B_1^-$ is of the form $B_1^- = \left(\begin{smallmatrix} h_1 & e_1 \\ f_1 & -h_1 \end{smallmatrix}\right)$. Since $\delta U_{-1} = [B_0^-,U_{-1}]$ a direct calculation yields
\begin{gather*}
B_0^-=\begin{pmatrix}\tfrac{1}{2}\delta u & e_0 \\ 0 & -\tfrac{1}{2}\delta u \end{pmatrix}.
\end{gather*}
Moreover, the sum $[B_1^-,U_{-1}] + [B_0^-,U_0]$ is given by
$$
\begin{pmatrix}
\tfrac{i}{2}e_0 e^{-u} - \tfrac{i}{2}f_1 e^u & ih_1 e^u + ie_0 u_y + \tfrac{i}{2}e^{-u}\delta u \\
-\tfrac{i}{2}e^{-u}\delta u & -\tfrac{i}{2}e_0 e^{-u} + \tfrac{i}{2}f_1 e^u
\end{pmatrix}.
$$
For the diagonal entry of $[B_1^-,U_{-1}] + [B_0^-,U_0]$ the $\oplus$-part is given by the imaginary part and therefore
$$
-\tfrac{i}{2}\delta u_y = \tfrac{i}{2}\Re(e_0) e^{-u} - \tfrac{i}{2}\Re(f_1) e^u.
$$
Thus we get
$$
\delta u_y = \Re(f_1) e^u - \Re(e_0) e^{-u}.
$$
For $B^-(x) := A_{f_0}^-(x)$ we obtain
\begin{eqnarray*}
\Im \left(\trace(\delta U_0 A_{f_0, 0}^{-}) + \trace(\delta U_{-1}A_{f_0 ,1}^{-})\right) & = & \Im\left(-i\delta u_y h_0 - \tfrac{i}{2}e_0 e^{-u}\delta u + \tfrac{i}{2}f_1 e^u\delta u\right) \\
& = & -\delta u_y \Re(h_0) +\tfrac{1}{2} \delta u \left(\Re(f_1) e^u- \Re(e_0) e^{-u} \right) \\
& = & \dfrac{1}{2}\left( \delta u \delta u_y^{f_0} - \delta u^{f_0} \,\delta u_y\right).
\end{eqnarray*}
Now a direct calculation gives
\begin{eqnarray*}
\frac{1}{2}\Omega(d\Gamma_{(u,u_y)}([f]),(\delta u,\delta u_y)) & = & \frac{1}{2}\int_0^{\mathbf{p}} (\delta u^{f_0}\delta u_y - \delta u \,\delta u_y^{f_0}) dx \\
& = & -\int_0^{\mathbf{p}} \Im \left(\trace(\delta U_0 A_{f_0, 0}^{-}) + \trace(\delta U_{-1}A_{f_0 ,1}^{-})\right) dx \\
& = & -\int_0^{\mathbf{p}} \langle \delta U_{\lambda}(x),A_{f_0}^-(x) \rangle_{\Lambda} dx  \\
& \stackrel{\delta U_{\lambda} \in \oplus}{=} & -\int_0^{\mathbf{p}} \langle \delta U_{\lambda}(x),A_{f_0}(x) \rangle_{\Lambda} dx.
\end{eqnarray*}
Setting $\widehat{P}_x(\tfrac{\delta\ln\mu}{\mathbf{p}}) := P_x(\tfrac{\delta\ln\mu}{\mathbf{p}}) + \sigma^*P_x(\tfrac{\delta\ln\mu}{\mathbf{p}})$, we further obtain 
\begin{eqnarray*}
\frac{1}{2}\Omega(d\Gamma_{(u,u_y)}([f]),(\delta u,\delta u_y)) & \stackrel{\text{Lem. } \ref{general_variation_U}}{=} & -\int_0^{\mathbf{p}} \langle [L_{\lambda}(x),B^-(x)],A_{f_0}(x) \rangle_{\Lambda} dx \\
& & -\, \int_0^{\mathbf{p}} \langle \widehat{P}_x(\tfrac{\delta\ln\mu}{\mathbf{p}}), A_{f_0}(x) \rangle_{\Lambda} dx \\
& = & \int_0^{\mathbf{p}} \langle [B^-(x),L_{\lambda}(x)], A_{f_0}(x) \rangle_{\Lambda} dx \\
& & -\, \int_0^{\mathbf{p}} \langle \widehat{P}_x(\tfrac{\delta\ln\mu}{\mathbf{p}}), A_{f_0}(x) \rangle_{\Lambda} dx.
\end{eqnarray*}
Recall, that $\trace([B^-(x), L_{\lambda}(x)]\cdot A_{f_0}(x)) = \trace(B^-(x)\cdot[L_{\lambda}(x),A_{f_0}(x)])$. Moreover, there holds $[L_{\lambda}(x), A_{f_0}(x)] = 0$ and we get
\begin{eqnarray*}
\frac{1}{2}\Omega(d\Gamma_{(u,u_y)}([f]),(\delta u,\delta u_y)) & = & \int_0^{\mathbf{p}} \langle B^-(x), [L_{\lambda}(x), A_{f_0}(x)]\rangle_{\Lambda} dx \\
& & -\, \int_0^{\mathbf{p}} \langle \widehat{P}_x(\tfrac{\delta\ln\mu}{\mathbf{p}}), A_{f_0}(x) \rangle_{\Lambda} dx \\
& = & -\, \int_0^{\mathbf{p}} \langle \widehat{P}_x(\tfrac{\delta\ln\mu}{\mathbf{p}}), A_{f_0}(x) \rangle_{\Lambda} dx \\
& = & -\int_0^{\mathbf{p}} \langle A_{f_0}(x),\widehat{P}_x(\tfrac{\delta\ln\mu}{\mathbf{p}})\rangle_{\Lambda} dx.
\end{eqnarray*}
Writing out the last equation yields
\begin{eqnarray*}
\Omega(d\Gamma_{(u,u_y)}([f]),(\delta u,\delta u_y)) & = & -\dfrac{2}{\mathbf{p}}\int_0^{\mathbf{p}} \Im\left(\text{Res}_{y_0} \tfrac{d\lambda}{\lambda}\trace(A_{f_0}(x)\widehat{P}_x(\delta\ln\mu))\right) dx \\
& = & -2\Im\left(\text{Res}_{y_0} (f_0 \cdot \delta \ln\mu \tfrac{d\lambda}{\lambda})\right) \\
& = & i\left(\text{Res}_{y_0} (f_0 \cdot \delta \ln\mu \tfrac{d\lambda}{\lambda}) - \text{Res}_{y_0} \overline{(f_0 \cdot \delta \ln\mu \tfrac{d\lambda}{\lambda})}\right)  \\
& = & i\left(\text{Res}_{y_0} (f_0 \cdot \delta \ln\mu \tfrac{d\lambda}{\lambda}) - \text{Res}_{y_{\infty}} \eta^*\overline{(f_0 \cdot \delta \ln\mu \tfrac{d\lambda}{\lambda})}\right)  \\
& = & i\left(\text{Res}_{y_0} (f_0 \cdot \delta \ln\mu \tfrac{d\lambda}{\lambda}) + \text{Res}_{y_{\infty}} (f_{\infty} \cdot \delta \ln\mu \tfrac{d\lambda}{\lambda})\right)
\end{eqnarray*}
and thus
$$
\Omega(d\Gamma_{(u,u_y)}([f]),(\delta u, \delta u_y)) = i\, \text{Res}([f]\,\omega(\delta u,\delta u_y)).
$$
\item[(iii)]
In order to prove (iii) we have to show that $\Omega$ is non-degenerate on $M_g^{\mathbf{p}}$. From equation \eqref{master} we know that 
$$
\Omega((\delta u, \delta u_y), (\delta \widetilde{u}, \delta \widetilde{u}_y)) = 0 \; \text{ for } \; (\delta u, \delta u_y), (\delta \widetilde{u}, \delta \widetilde{u}_y) \in L_{(u,u_y)} = \ker(\omega).
$$
Moreover, $\omega: T_{(u,u_y)}M_g^{\mathbf{p}} \to H^0_{\RR}(Y(u,u_y),\Omega)$ is surjective since $\dim T_{(u,u_y)}M_g^{\mathbf{p}} = 2g$ and $\dim\ker(\omega) = g = \dim H^0_{\RR}(Y(u,u_y),\Omega)$. Thus we have 
$$
T_{(u,u_y)}M_g^{\mathbf{p}}/\ker(\omega) \simeq H^0_{\RR}(Y(u,u_y),\Omega)
$$
and there exists a basis $\{ \delta a_1,\ldots,\delta a_g,\delta b_1,\ldots,\delta b_g\}$ of $T_{(u,u_y)}M_g^{\mathbf{p}}$ such that
$$
\text{span}\{\delta a_1,\ldots,\delta a_g\} = \ker(\omega) \; \text{ and } \; \omega[\text{span}\{\delta b_1,\ldots,\delta b_g\}] = H^0_{\RR}(Y(u,u_y),\Omega).
$$
Now $L_{(u,u_y)} = \ker(\omega) \simeq H^1_{\RR}(Y(u,u_y),\mathcal{O})$ and since the pairing from Serre duality is non-degenerate we obtain with equation \eqref{master} (after choosing the appropriate basis)
$$
\Omega(\delta a_i,\delta b_j) = \delta_{ij} \; \text{ and } \; \Omega(\delta b_i,\delta a_j) = -\delta_{ij}.
$$ 
Summing up the matrix representation $B_{\Omega}$ of $\Omega$ on $T_{(u,u_y)}M_g^{\mathbf{p}}$ has the form 
$$
B_{\Omega} = \begin{pmatrix} 0 & \unity \\ -\unity & (\ast) \end{pmatrix}
$$
and thus $\Omega$ is of full rank. This shows (iii) and concludes the proof of Theorem \ref{pairing}.
\end{enumerate}
\end{proof}

\bibliography{diss_bibliography} 

\begin{thebibliography}{10}

\bibitem{Audin}
M.~Audin.
\newblock {\em {Hamiltonian systems and their integrability.}}
\newblock Providence, RI: American Mathematical Society (AMS); Paris:
  Soci\'et\'e Math\'ematique de France (SMF), 2008.

\bibitem{Bobenko_Its_Matveev}
E.~D. Belokolos, A.~I. Bobenko, V.~Z. Enol'skij, A.~R. Its, and V.~B. Matveev.
\newblock {\em {Algebro-geometric approach to nonlinear integrable equations.}}
\newblock Berlin: Springer-Verlag, 1994.

\bibitem{Bobenko_Kuksin_Sine}
A.~{Bobenko} and S.~{Kuksin}.
\newblock {Small-amplitude solutions of the sine-Gordon equation on an interval
  under Dirichlet or Neumann boundary conditions.}
\newblock {\em {J. Nonlinear Sci.}}, 5(3):207--232, 1995.

\bibitem{bob3}
A.~I. Bobenko.
\newblock {All constant mean curvature tori in $\RR^3, \SS^3$ and $\HH^3$ in
  terms of theta-functions.}
\newblock {\em Math. Ann.}, 290(2):209--245, 1991.

\bibitem{bob2}
A.~I. Bobenko.
\newblock {Constant mean curvature surfaces and integrable equations.}
\newblock {\em Russ. Math. Surv.}, 46(4):1--45, 1991.

\bibitem{bob1}
A.~I. Bobenko.
\newblock {Surfaces in terms of 2 by 2 matrices. Old and new integrable cases.}
\newblock In {\em {Fordy, Allan P. (ed.) et al., Harmonic maps and integrable
  systems. Based on conference, held at Leeds, GB, May 1992. Braunschweig:
  Vieweg. Aspects Math. E23, 83-127}}, 1994.

\bibitem{bob4}
A.~I. Bobenko.
\newblock {Exploring surfaces through methods from the theory of integrable
  systems: the Bonnet problem.}
\newblock In {\em {Surveys on geometry and integrable systems. Based on the
  conference on integrable systems in differential geometry, Tokyo, Japan, July
  17--21, 2000}}, pages 1--53. Tokyo: Mathematical Society of Japan, 2008.

\bibitem{Burstall_Pedit_1}
F.~{Burstall} and F.~{Pedit}.
\newblock {Harmonic maps via Adler-Kostant-Symes theory.}
\newblock In {\em {Harmonic maps and integrable systems. Based on conference,
  held at Leeds, GB, May 1992}}, pages 221--272. Braunschweig: Vieweg, 1994.

\bibitem{Burstall_Pedit_2}
F.~{Burstall} and F.~{Pedit}.
\newblock {Dressing orbits of harmonic maps.}
\newblock {\em {Duke Math. J.}}, 80(2):353--382, 1995.

\bibitem{Carberry_Schmidt}
E.~{Carberry} and M.~U. {Schmidt}.
\newblock {The Closure of Spectral Data for Constant Mean Curvature Tori in
  $\SS^3$}.
\newblock {\em To appear in J. Reine Angew. Math.}, Feb. 2012.

\bibitem{DPW}
J.~Dorfmeister, F.~Pedit, and H.~Wu.
\newblock {Weierstrass type representation of harmonic maps into symmetric
  spaces.}
\newblock {\em {Commun. Anal. Geom.}}, 6(4):633--668, 1998.

\bibitem{Dorfmeister_Wu_Loop_groups}
J.~{Dorfmeister} and H.~{Wu}.
\newblock {Constant mean curvature surfaces and loop groups.}
\newblock {\em {J. Reine Angew. Math.}}, 440:43--76, 1993.

\bibitem{Drinfeld}
V.~{Drinfel'd}.
\newblock {Quantum groups.}
\newblock {Proc. Int. Congr. Math., Berkeley/Calif. 1986, Vol. 1, 798-820},
  1987.

\bibitem{Fo}
O.~Forster.
\newblock {\em {Lectures on Riemann surfaces.}}
\newblock {Graduate Texts in Mathematics, Vol. 81. New York - Heidelberg
  -Berlin: Springer-Verlag. VIII}, 1981.

\bibitem{Pedit_Schmitt}
A.~Gerding, F.~Pedit, and N.~Schmitt.
\newblock {Constant mean curvature surfaces: an integrable systems
  perspective.}
\newblock In {\em {Harmonic maps and differential geometry. A harmonic map fest
  in honour of John C. Wood's 60th birthday, Cagliari, Italy, September 7--10,
  2009}}, pages 7--39. Providence, RI: American Mathematical Society (AMS),
  2011.

\bibitem{Grinevich_Schmidt}
P.~G. Grinevich and M.~U. Schmidt.
\newblock {Period preserving nonisospectral flows and the moduli space of
  periodic solutions of soliton equations.}
\newblock {\em Physica D}, 87(1-4):73--98, 1995.

\bibitem{Haak_Schmidt_Schrader}
G.~Haak, M.~Schmidt, and R.~Schrader.
\newblock {Group theoretic formulation of the Segal-Wilson approach to
  integrable systems with applications.}
\newblock {\em {Rev. Math. Phys.}}, 4(3):451--499, 1992.

\bibitem{Hauswirth_Kilian_Schmidt_1}
L.~{Hauswirth}, M.~{Kilian}, and M.~U. {Schmidt}.
\newblock {Finite type minimal annuli in $\mathbb{S}^2 \times \mathbb{R}$}.
\newblock {\em To appear in Illinois J. Math.}, Oct. 2012.

\bibitem{Hauswirth_Kilian_Schmidt_2}
L.~{Hauswirth}, M.~{Kilian}, and M.~U. {Schmidt}.
\newblock {Properly embedded minimal annuli in $\mathbb{S}^2 \times
  \mathbb{R}$}.
\newblock {\em ArXiv e-print 1210.5953}, Oct. 2012.

\bibitem{Hauswirth_Kilian_Schmidt_3}
L.~{Hauswirth}, M.~{Kilian}, and M.~U. {Schmidt}.
\newblock {The geometry of embedded constant mean curvature tori in the
  3-sphere via integrable systems}.
\newblock {\em ArXiv e-print 1309.4278}, Sept. 2013.

\bibitem{Hitchin_Harmonic}
N.~J. Hitchin.
\newblock {Harmonic maps from a 2-torus to the 3-sphere.}
\newblock {\em J. Differ. Geom.}, 31(3):627--710, 1990.

\bibitem{Kilian_Schmidt_cylinders}
M.~{Kilian} and M.~U. {Schmidt}.
\newblock {On the moduli of constant mean curvature cylinders of finite type in
  the $3$-sphere}.
\newblock {\em ArXiv e-print 0712.0108v2}, Dec. 2007.

\bibitem{Kilian_Schmidt_infinitesimal}
M.~Kilian and M.~U. Schmidt.
\newblock {On infitesimal deformations of CMC surfaces of finite type in the
  3-sphere.}
\newblock In {\em {Riemann surfaces, harmonic maps and visualization.
  Proceedings of the 16th Osaka City University International Academic
  Symposium, Osaka, Japan, December 15--20, 2008}}, pages 231--248. Osaka:
  Osaka Municipal Universities Press, 2010.

\bibitem{Knopf_phd}
M.~Knopf.
\newblock {\em Periodic solutions of the sinh-Gordon equation and integrable
  systems}.
\newblock PhD thesis, 2013.

\bibitem{Krichever_algebraic}
I.~M. Krichever.
\newblock {Methods of algebraic geometry in the theory of nonlinear equations.}
\newblock {\em {Russ. Math. Surv.}}, 32(6):185--213, 1977.

\bibitem{Kuksin_Hamiltonian_PDE}
S.~{Kuksin}.
\newblock {\em {Analysis of Hamiltonian PDE's.}}
\newblock Oxford: Oxford University Press, 2000.

\bibitem{Lewy}
H.~{Lewy}.
\newblock {Neuer Beweis des analytischen Charakters der L\"osungen elliptischer
  Differentialgleichungen.}
\newblock {\em {Math. Ann.}}, 101:609--619, 1929.

\bibitem{McIntosh_Toda}
I.~McIntosh.
\newblock {Global solutions of the elliptic 2D periodic Toda lattice.}
\newblock {\em {Nonlinearity}}, 7(1):85--108, 1994.

\bibitem{McIntosh_harmonic_tori}
I.~McIntosh.
\newblock {Harmonic tori and their spectral data.}
\newblock In {\em {Surveys on geometry and integrable systems. Based on the
  conference on integrable systems in differential geometry, Tokyo, Japan, July
  17--21, 2000}}, pages 285--314. Tokyo: Mathematical Society of Japan, 2008.

\bibitem{McKean}
H.~P. McKean.
\newblock {The sine-Gordon and sinh-Gordon equations on the circle.}
\newblock {\em {Commun. Pure Appl. Math.}}, 34:197--257, 1981.

\bibitem{Pinkall_Sterling}
U.~Pinkall and I.~Sterling.
\newblock {On the classification of constant mean curvature tori.}
\newblock {\em {Ann. Math. (2)}}, 130(2):407--451, 1989.

\bibitem{Pressley_Segal}
A.~Pressley and G.~Segal.
\newblock {\em {Loop groups.}}
\newblock Oxford (UK): Clarendon Press, repr. with corrections edition, 1988.

\bibitem{Schmidt_infinite}
M.~U. Schmidt.
\newblock {\em {Integrable systems and Riemann surfaces of infinite genus.}}
\newblock Memoirs of the American Mathematical Society Series, Number 581,
  1996.

\end{thebibliography}
\bibliographystyle{abbrv}

\end{document}